\numberwithin{equation}{section}
\newcommand{\R}{\mathbb{R}}
\newcommand{\N}{\mathbb{N}}
\newcommand{\Z}{\mathbb{Z}}
\newcommand{\Hm}{\mathcal{H}}
\newcommand{\eps}{\varepsilon}
\newcommand{\del}{\partial}
\renewcommand{\div}{\operatorname{div }}
\newcommand{\1}{\mathds{1}}
\newcommand{\loc}{\mathrm{loc}}
\DeclareMathOperator{\supp}{supp}
\DeclareMathOperator{\dist}{dist}
\newcommand{\weakstar}{\stackrel{\ast}{\rightharpoonup}}
\theoremstyle{definition}
\newtheorem{definition}{Definition}[section]
\newtheorem{remark}{Remark}[section]
\theoremstyle{theorem}
\newtheorem{theorem}{Theorem}[section]
\newtheorem*{theorem*}{Theorem}
\newtheorem{proposition}{Proposition}[section]
\newtheorem{lemma}{Lemma}[section]
\author{Janusz Ginster, Peter Gladbach}
\title{Solvation in the Large Box Limit}
\date{}
\begin{document}

\maketitle

\abstract{In this paper, the authors study the limit of a sharp interface model for the solvation of charged molecules in an implicit solvent as the number of solute molecules and the size of the surrounding box tend to infinity. 
The energy is given by a combination of local terms accounting for the physical presence of the molecules in the solvent and a nonlocal electrical energy with or without an ionic effect.
In the presence of an ionic effect, the authors prove a screening effect in the limit, i.e., the limit is completely localized and hence electrical long-range interactions of the molecules can be neglected.
In the absence of the ionic effect, the authors show that the behavior of the energy depends on the scaling of the number of molecules with respect to the size of the surrounding box.
All scaling regimes are identified and corresponding limit results proved.
In regimes with many solute molecules this limit includes electrical interactions of $H^{-1}$-type between the molecules.}


\section{Introduction}
In \cite{dai2016convergence}, Dai, Li, and Lu derive a sharp interface model for the solvation of charged molecules in an implicit solvent (see also \cite{PhysRevLett.96.087802,dzubiella2006coupling,ChDzLiMcZh14,ChChDzLiMcZh12,LiZh12,LiLi15} and references therein).
The free energy for $N$ charged molecules at fixed positions $x_1,\dots,x_n$ in a container $\Omega \subseteq \R^3$ is given by
\begin{align}\label{eq: energyF}
 F(x_1,\dots,x_n,u) = \beta \int_{\Omega} (1-u(x)) \,dx + \gamma |Du|(\Omega) +  \int_{\Omega} u(x) \sum_{i=1}^n U^i_{LJ}(x-x_i) \,dx + F_{el}(x_1,\dots,x_N,u),
\end{align}
where the phase-field $u:\Omega \to \{0,1\}$ determines the region occupied by the solvent, $u^{-1}(1)\subseteq \Omega$.
The first term in the energy reflects the amount of work needed to create a solute region in a solvent medium at hydrostatic pressure $\beta$, 
the second term accounts for the interfacial energy between solute and solvent regions where $\gamma$ is the effective surface tension,
and the third term reflects the interaction between the charged molecules and the solvent given by an interaction via a Lennard-Jones $U_{LJ}^i$ depending on the molecule species.\\  
The electrical energy $F_{el}(x_1,\dots,x_N,u)$ is the free energy induced by the charged molecules and the solvent:
\[
 F_{el}(x_1,\dots,x_n,u) \coloneqq \int_{\Omega} \left(-\frac{\eps(u)}2 |\nabla \psi|^2 + Q_{x_1,\ldots,x_n} \psi - u B(\psi)\right) \, dx, \label{eq: sharpinterface}
\]
where $Q_{x_1,\ldots,x_n}\in L^1(\Omega)$ is the total charge density of all solute molecules, and $\psi$ is the electric potential solving the Poisson-Boltzmann equation (see \cite{li09,HoSh90,FoBr97}),
\[
 \begin{cases}
  -\div (\eps(u) \nabla \psi) + u B'(\psi) = \rho &\text{in } \Omega, 
  \\ \psi = \psi_{\infty} &\text{on } \del \Omega, 
 \end{cases}
\]
for some given fixed  $\psi_{\infty}$.
The dielectric constant $\eps(u)$ is given by $\eps_1\approx 80$ for water and $\eps_0\approx 1$ for vacuum. \\
The term $B(\psi)$ models the ionic effect penalizing high electric potentials, and is given by
\begin{equation} \label{eq: defB}
 B(s) \coloneqq k_B T \sum_{k=1}^M c_j^{\infty} (e^{-\frac{q_k s}{k_B T}} - 1),
\end{equation}
where $k_B$ is the Boltzmann constant, $T$ is the temperature, $c_k^{\infty}$ is the bulk concentration, and $q_k$ is the charge of the $k$th ionic species in the solvent, with $\sum_{k=1}^M q_k = 0$. \\
As $B$ is convex, we can write equivalently
\begin{align}\label{eq: electricasmax}
 F_{el}(x_1,\dots,x_n,u) = \max_{\psi \in H_0^1(\Omega)} \int_{\Omega} \left(-\frac{\eps(u)}2 |\nabla \psi|^2 + Q_{x_1,\ldots,x_n} \psi - u B(\psi)\right) \, dx.
\end{align}
Using convex duality one can show that the electrical energy as written in \eqref{eq: electricasmax} equals the free electrical energy associated to the free ions in the solvent and the charges induced by the solute molecules (see also \cite{FoBr97}). 
\newline
\begin{figure}
 \begin{center}
  \includegraphics[scale=3]{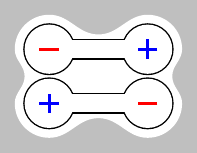}
 \end{center}
\caption{Illustration of the cell problem. 
Molecules will arrange themselves so as to decrease their energy. 
Here the two dipoles are displayed to form a quadrupole, lowering the electrical energy, and share a single bubble, lowering the surface energy.}
\end{figure}
To the best knowledge of the authors, this model has only been studied for a fixed number of solute molecules (see \cite{LiLi15, LiZh12}).
Our contribution is to derive an effective energy in the situation of a large number of solute molecules.
For this, we study  the limiting behavior of a rescaled version of the energy $F$ in the sense of $\Gamma$-convergence (for an introduction see, for example, \cite{braides} or \cite{dalmaso}) as the number of solute molecules and the size of the surrounding box $\Omega$ go to infinity.\\
The main mathematical challenge is the following.
The energy $F$ consists of local terms and the electrostatic the energy, $F_{el}$, which is a priori non-local.
However, the electrostatic energy contains mainly a local self-energy per charge and a nonlocal electrostatic interaction of the charges (see also Subsection \ref{sec: heuristics}). 
In the derivation of the limit energy it is therefore utterly important to distinguish nonlocal effects from the part of the energy which localizes in the limit.
As a main tool, we present a strategy to find clusters of solute molecules whose nonlocal interaction is controlled (see proof of lower bound of Theorem \ref{thm: GammaB} and Lemma \ref{lemma: cubes}). \\ 
We consider two different versions of the energy, the case in which $B$ is as in \eqref{eq: defB}, and the case $B=0$. 
For $B$ as in \eqref{eq: defB}, we show that the energy fully localizes in the limit, i.e., the limit energy is given as the self-energy of the diffused limit molecule distribution (see Theorem \ref{thm: GammaB}). 
Here, we can control the nonlocal interactions as the occurrence of $B$ in the Poisson-Boltzmann equation leads to a fast decay of the electric field $\psi$ generated by a given charge distribution.
This indicates that the ionic effect gives rise to a screening effect, i.e., the local arrangement of the solvent blocks the electric long range interactions (see also \cite{capet2009minimum}). \\
In the case $B=0$ the decay of the electric field $\psi$ is much slower which leads to a competition of the local and nonlocal terms depending on the number of solute molecules.
It shares some structural properties with two-dimensional linearized models for dislocations (see \cite{CeLe05,dLGaPo12, Po07,GaLePo10}), in which an energy of the form
\[
\int_{\Omega} \mathbb{C} \beta: \beta \, dx \text{ subject to } \operatorname{curl }\beta = \sum_i \xi_i \delta_{x_i}
\]
is studied, where $\mathbb{C} \in \R^{2\times2\times2\times2}$ is a linearized elastic tensor.
Ignoring the local terms in $u$, we observe that after partial integration the energy $F$ is essentially the integral of the squared gradient of a potential whose divergence is prescribed, thus playing in our case the counterpart to the $\operatorname{curl}$ in the dislocation model. 
In both cases this leads to a decay of the gradient of the electric potential and the elastic strain away from the solute molecules and dislocations, respectively, which is as fast as the gradient of the fundamental solution to the Poisson equation.
Clearly, due to the difference in dimension, the occurring scales of the problems are different.
However, the limiting behavior is similar as a local and a nonlocal term compete, with the local term being predominant in dilute regimes, regimes with relatively few solute molecules, (see Theorem \ref{thm: GammanoBsub} and  \cite{dLGaPo12, Po07}), and in regimes with more solutes the nonlocal $H^{-1}$-interaction of the solutes dominating (see Theorem \ref{thm: Bsuper} and  \cite{GaLePo10}). 
In dilute regimes, the key to control the electrostatic interactions is a quantitative estimate of the average interaction between different clusters (see Lemma \ref{lemma: cubes}).
In the intermediate, so-called critical, regime both effects are of the same order and appear in the limit (see Theorem \ref{thm: Bcrit} and \cite{GaLePo10}).  \\

\subsection{Setting of the Problem}
Let $N\in\mathbb{N}$ be the number of solute species.
For $r>0$ we define the admissible solute distributions as a subset of the $\R^N$-valued Radon measures, $\mathcal{M}(\Omega;\R^N)$,
\begin{align}
 \mathcal{A}_r(\Omega) \coloneqq \left\{\rho = \left(\sum_{j=1}^{m_i} \delta_{x_j^i}\right)_{i=1}^N \in \mathcal{M}(\Omega;\R^N)\,:\,m_1,\ldots,m_N \in \N, x_j^i \in \Omega, \sup_{x \in \Omega} |\rho(B_r(x))| \leq M \right\}, \label{def: Ar}
\end{align}
where $M>0$ is a fixed constant.
Moreover, for $\rho \in \mathcal{A}_r(\Omega)$ we write
\begin{align}
 Q_r \rho \coloneqq \sum_{i=1}^N \sum_{j=1}^{m_i} \phi_i\left(\frac{x-x_j^i}{r}\right)/r^3\in \mathcal{M}(\R^3,\R)
\end{align}
for the charge density associated to the molecule distribution $\rho$.
Here, the distributions $\phi_1,\ldots,\phi_N\in L^\infty(\R^3,\R)$ are assumed to have compact support, they represent the charge distributions associated to each solute species, and generalize the simple uniform distributions in \cite{dai2016convergence}. 
The upper bound $M>0$ prevents accumulation of too much charge at scale $r$. 
For later purposes we also define for $\rho \in \mathcal{M}(\Omega;\R^N)$ the measure $Q_0 \rho \coloneqq \sum_{i=1}^N \left(\int_{\R^3} \phi_i \,dx \right) \rho^i$, where
$\rho^i$ denotes the $i$th entry of the vector-valued measure $\rho$.\\
\newline
Now, define the rescaled energy $E_r: \mathcal{M}(\Omega;\R^N) \rightarrow \R \cup \{+\infty\}$ as 
\begin{align}\label{energy definition}
 E_r(\rho) \coloneqq \begin{cases} \underset{u \in L^{\infty}(\Omega;\{0,1\})}{\inf} \left[a |\rho| + r^{-3} \int_{\Omega}  \beta (1-u) +  U_{r,\rho}(x) u(x) \,dx + r^{-2} \gamma |D u|(\Omega) +  E_r^{el}(\rho,u)\right] &\text{ if } \rho \in \mathcal{A}_r(\Omega),
 \\ +\infty &\text{ otherwise},\end{cases}
\end{align}
where $U_{r,\rho} (x) \coloneqq \sum_{i=1}^N \sum_{y \in \supp(\rho^i)} U^i_{LJ}(\frac{x - y}{r})$ for functions $U^i_{LJ}: \R^3 \rightarrow \R$ to be specified below in (A2). \\
In view of \eqref{eq: electricasmax}, we the rescaled electrical energy for a function $u \in L^{\infty}(\Omega;\{0,1\})$ and $\rho \in \mathcal{A}_r(\Omega)$ is given by
\begin{align}\label{electrical energy}
 E^{el}_r(\rho,u) \coloneqq \max_{\psi \in H_0^1(\Omega)} \int_{\Omega} \left(-\frac{\eps(u)}{2r} |\nabla \psi|^2 + (Q_r \rho) \, \psi - u r^{-3}B(\psi)\right) \, dx.
\end{align}
If $\Omega$ is unbounded, we denote by $H_0^1(\Omega)$ the closure of $C^{\infty}_c(\Omega)$ with respect to the $H^1(\Omega)$-seminorm.
Note that these functions are not necessarily in $L^2(\Omega)$. 
However, they are always in $L^{2^*}(\Omega)$. \\ \newline
Note here that, up to the total variation of the measure $\rho$, the energy $E_r(\rho)$ equals the energy as defined in \eqref{eq: energyF}, where $\Omega$ is replaced by $\frac1r \Omega$, and the charge distribution $Q_{x_1,\dots,x_n}$ is given by $r^3 (Q_r\rho)(r \cdot)$.
The term $a |\rho|$, for $a>0$ large enough, ensures coercivity of the energy $E_r$ and non-triviality of the later discussed limit energy, i.e., the limit being $-\infty$ everywhere (see also Subsection \ref{sec: heuristics}.
\\[3ex]
We will assume the following throughout the rest of the paper:
\begin{itemize}
 \item [(L)] $\Omega \subseteq \R^3$ is an open set with Lipschitz boundary.
 \item [(A1)] $\eps(1)\geq \eps(0) >0$, $\beta > 0$, $\gamma \geq 0$, and $a\in\R$ is large enough to make the later introduced self-energy coercive.
 \item [(A2)] For every $i=1,\ldots,N$, $U_{LJ}^i:\R^3\to \R$ is negative outside a ball $B(0,R)$ and is integrable on $\R^3 \setminus B(0,r)$ for all $r>0$.
\end{itemize}
We also assume that $B:\R\to \R$ fulfills either
\begin{itemize}
 \item [(B0)] $B=0$
\end{itemize}
or
\begin{enumerate}[label=(B\arabic{*}), ref=(B\arabic{*})]
 \item \label{B1}  $B\geq 0$, $B(0)=0$, $B$ is strictly convex, and $B(s)\geq cs^2 - \frac{1}{c}$ for some $c>0$,
 \item \label{B2} For every $s\in \R$ and every $p\in \partial^- B(s)$ it holds $ps \geq (1+c)B(s)$ for some $c>0$. 
\end{enumerate}

We note that $B$ defined in \eqref{eq: defB} satisfies the two convexity conditions \ref{B1} and \ref{B2}. 
All functions $B(s)=|s|^p$, $p>1$, satisfy \ref{B2} with $c=p-1$. 
However, strict convexity is not enough to guarantee \ref{B2}, illustrated by the function $s\mapsto |s|\log(1+|s|)$. See Lemma \ref{lemma: dual} for why this condition is useful.

\subsection{Heuristics and Scaling}\label{sec: heuristics}

We now look at an example configuration. 
We assume that the number of solute species is $N=1$, $ \Omega = \R^3$, and there are $M=K^3$ evenly spaced molecules positioned on the lattice points $Z_K\coloneqq [0,1)^3 \cap \frac{1}{K} \Z^3$, each with a positive charge, so that $\rho \coloneqq \sum_{z\in Z_K} \delta_z$ and $Q_r \rho = \sum_{z\in Z_K} r^{-3}\mathds{1}_{B(z,r)}$.

We set $u \coloneqq 1 - \sum_{z\in Z_K} \mathds{1}_{B(z,r)}$ and estimate the energy
\begin{align}
 E_r(\rho,u) \approx aM + \beta M\frac{4\pi}{3} + \gamma M 4\pi + M \int_{\R^3\setminus B(0,1)} U_{LJ}\,dx + E^{el}_r(\rho,u).
\end{align}
We see that all terms except the electrical interaction scale with the number of molecules, since they are largely local. The Lennard-Jones potential has a fast-decaying tail that can be ignored. Note that the Lennard-Jones interaction is negative, and if $|U_{LJ}|$ is too large, the energy will be negative.

We have yet to estimate the electrical energy. We first treat the case where $B=0$, which leads to a linear maximization problem
\[
 E^{el}_r(\rho,u) = \sup_{\psi \in H_0^1(\R^3)}  \int_{\R^3} Q_r\rho \,\psi - \frac{\eps(u)}{2r}|\nabla \psi|^2\,dx.
\]

Since the problem is linear, we may write $\psi(x) = \sum_{z\in Z_K} \frac{1}{r^3}\int_{B(z,r)} G_{r,u}(x,y)\,dy$, where $G_{r,u}(x,y)$ is the Green's function, which behaves as $\frac{r}{\eps(1)|x-y|}$ for $|x-y| \geq r$, so that after an integration by parts
\begin{align*}
 E^{el}_r(\rho,u) & =  \sum_{z\in Z_K} \sum_{z'\in Z_K} \frac{1}{2r^6}\int_{B(z,r)}\int_{B(z',r)} G_{r,u}(x,y)\,dx\,dy\\  
\approx &Me_0 + \frac{2\pi}{3}\sum_{z\neq z'} \frac{r}{\eps(1)|z-z'|}, 
\end{align*}
where $e_0$ is the electrical energy of a single charge in $\R^3$, since for $r$ small enough the boundary effect becomes negligible.

Summing up all interactions leads to
\[
 E^{el}_r(\rho,u) \approx Me_0 + \frac{2\pi}{3\eps(1)} rM^2 \int_{[0,1]^3} \int_{[0,1]^3} \frac{dx\,dy}{|x-y|}.
\]

We see that if $Mr\ll 1$, the self-energy dominates, and if $Mr\gg 1$, long-range Coulombic interactions between like charges dominate.

To make this precise, we introduce a parameter $\alpha(r)\in (0,\infty)$ representing the approximate number of solute molecules. Whenever $|\rho_r|(\R^3) \approx \alpha(r)$, we can expect that either
\[
E_r(\rho_r) \approx \alpha(r)\text{, if }r\alpha(r)\ll 1\text{, or }E_r(\rho_r) \approx r \alpha(r)^2\text{, if }r\alpha(r)\gg 1.
\]

Note that if $|\rho_r|(\R^3)\leq C \alpha(r)$, then the rescaled measures $\frac{\rho_r}{\alpha(r)}\in \mathcal{M}(\R^3,\R^N)$ have a vaguely convergent subsequence, and in that topology we will show the following limit scaling, in the sense of $\Gamma$-convergence (see \cite{braides},\cite{dalmaso}):

For $B=0$, whenever $r\to 0$, $\alpha(r)\to \infty$, we have
\begin{itemize}
 \item The subcritical regime $ \alpha(r) r \to 0$: $\frac{E_r}{\alpha(r)}$ $\Gamma$-converges to a local functional $E_0(\rho) = \int_{\R^3} \varphi\left(\frac{d\rho}{d|\rho|}\right)\,d\rho$ depending on the vector-valued mass density.
 \item The supercritical regime $\alpha(r) r \to \infty$: $\frac{E_r}{\alpha(r)^2 r}$ $\Gamma$-converges to the Coulombic long-range interaction 
 \[
 \frac{1}{2\eps(1)}\|Q_0\rho\|_{H^{-1}}^2 = \frac{1}{2\eps(1)}\int_{\R^3} \int_{\R^3} \frac{Q_0\rho(dx)Q_0\rho(dy)}{|x-y|}
 \]
 among net-charged solute molecules.  
 \item The critical regime $ \alpha(r) r \to c\in \R$: $\frac{E_r}{\alpha(r)}$ $\Gamma$-converges to $E_0 + \frac{1}{2c\eps(1)}\|Q_0\rho\|_{H^{-1}}^2$.

\end{itemize}

If $B$ is instead superquadratic, we may take $B(s)=\frac{1}{2}s^2$ as a representative, so that $\psi$ solves the linear maximization problem
\[
  E^{el}_r(\rho,u) = \sup_{\psi \in H_0^1(\R^3)} \sum_{z\in Z_K} \frac{1}{r^3}\int_{B(z,r)} \psi - \frac{\eps(1)}{2r}|\nabla \psi|^2\,dx - \int_{\R^3 \setminus \bigcup_{z\in Z_K} B(z,r)} \frac{\eps(0)}{2r}|\nabla \psi|^2 + \frac{1}{2r^3}\psi^2 \,dx.
\]

Again, $\psi$ is given by the convolution of $Q_r \rho$ with a Green's function $G_{r,u,B}(x,y)$, with this time decays as $G_{r,u,B}(x,y) \approx \frac{r}{\eps(1)|x-y|} e^{-\frac{|x-y|}{r}}$. Now whenever $M r^3 \ll 1$, i.e. $\min_{z\neq z'\in Z_K} |z-z'|\gg r$, the interaction is exponentially weak.

We expect the $\Gamma$-limit of $\frac{E_r}{\alpha(r)}$, in the topology $\frac{\rho_r}{\alpha(r)}\weakstar \rho$, to be the local self-energy of the mass density $E_0(\rho)$.

\subsection{Main Results}

In Section \ref{sec: B} we prove the $\Gamma$-convergence of the rescaled energies $\{E_r\}_{r>0}$ under certain growth conditions on $B$ which include the ionic effect given in \eqref{eq: defB}.
\begin{theorem}\label{thm: GammaB}
Assume \ref{B1} and \ref{B2}.
Moreover, assume that $\alpha(r) \to \infty$ and $\alpha(r) r^3 \to 0$. Then the functionals $\{E_r/\alpha(r)\}_{r>0}$ $\Gamma$-converge, with respect to vague convergence of the measures $\rho_r/\alpha(r)$, to the limit energy 
$E_0: \mathcal{M}(\Omega;\R^3) \rightarrow [0,\infty]$ defined by

\begin{align}
 E_0(\rho) \coloneqq \begin{cases}
             \int \varphi\left( \frac{d\rho}{d|\rho|} \right)\,d|\rho| &\text{if the vector-valued measure } \rho \text{ is nonnegative in each component}. \\
             +\infty &\text{otherwise}.
            \end{cases}
            \end{align}
\end{theorem}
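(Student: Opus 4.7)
The overall strategy is the standard $\Gamma$-convergence framework: compactness, a liminf inequality, and a recovery sequence. Compactness follows immediately from the coercivity term $a|\rho|$ in \eqref{energy definition}: if $E_r(\rho_r)/\alpha(r)$ is uniformly bounded, then $|\rho_r/\alpha(r)|(\Omega) \leq C$, giving vague precompactness, and nonnegativity of each component of any limit is automatic since each $\rho_r$ is a sum of positive Dirac masses (so the case $E_0(\rho) = +\infty$ cannot be produced by finite-energy sequences). The density $\varphi:\R_{\geq 0}^N \to [0,\infty)$ should be defined by a cell formula: $\varphi(\xi)$ equals the infimum, over periodic arrangements of molecules in $\R^3$ with component-wise number densities $\xi$, of the unrescaled per-unit-volume energy. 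One checks that this $\varphi$ is $1$-homogeneous (in the dilute regime the self-energies are additive), so that $E_0$ is a well-defined lower semicontinuous integral functional of vector-valued measures.

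For the liminf inequality, the essential point is that the electrical energy localizes. Under \ref{B1} and \ref{B2} the maximizer $\psi$ of \eqref{electrical energy} solves a rescaled Poisson--Boltzmann equation in which the coefficient $r^{-3}$ in front of the $B$-term produces Debye-type screening at a length scale comparable to $r$, and hence exponential decay of the associated Green's function. I would partition $\Omega$ into a shifted grid of cubes of side length $\ell_r$ with $r \ll \ell_r \ll 1$, and use Lemma \ref{lemma: cubes} to pick a shift of the grid so that the total cross-interaction between distinct cubes is negligible compared with $\alpha(r)$. On each cube, restricting $\psi$ and using Lemma \ref{lemma: dual} together with condition \ref{B2} yields a clean lower bound by the unrescaled cell energy of the enclosed molecules. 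Rescaling back and invoking the cell formula, the contribution of each cube is at least $\ell_r^3 \alpha(r)\varphi(\xi_r)$ for $\xi_r$ the local mean density; Jensen and standard lower semicontinuity for $1$-homogeneous integral functionals of measures then deliver $\liminf_r E_r(\rho_r)/\alpha(r)\geq E_0(\rho)$.

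For the limsup inequality, by density it suffices to treat $\rho = \sum_{\ell=1}^L c_\ell \delta_{y_\ell}$ with $c_\ell \in \R_{\geq 0}^N$. Near each $y_\ell$ one places approximately $c_{\ell,i}\alpha(r)$ molecules of species $i$ inside a small ball of radius $\eta_r$, arranged according to a near-optimizer of the cell problem defining $\varphi(c_\ell/|c_\ell|)$. Choosing $r \ll \eta_r \ll \min_{\ell\neq \ell'}|y_\ell - y_{\ell'}|$, which is possible because $\alpha(r)r^3 \to 0$ leaves ample room for the clusters to be both internally well-resolved and mutually well-separated, screening makes inter-cluster interactions exponentially small, so summing the contributions gives $\limsup_r E_r(\rho_r)/\alpha(r)\leq E_0(\rho)$. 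A diagonal argument extends the construction to arbitrary admissible $\rho$.

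The main obstacle is the clustering step in the liminf. One must show that the nonlocal Poisson--Boltzmann energy decouples across distant cubes with a total error that is $o(\alpha(r))$. This relies on translating the superquadratic behavior of $B$ encoded by \ref{B1}--\ref{B2} into an exponential Green's function estimate and then, via a shift-averaging argument (Lemma \ref{lemma: cubes}), selecting a grid for which the sum of cross-interactions over all pairs of cubes is controlled by the total energy rather than by its square; getting the correct powers of $r$ and $\ell_r$ is the heart of the argument.
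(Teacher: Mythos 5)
The high-level skeleton (compactness, liminf by clustering, limsup by a lattice recovery sequence) matches the paper, but the core mechanism you propose for localizing the electrical energy does not.

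You invoke ``exponential decay of the Green's function'' from Debye-type screening, and propose to use it both for the inter-cube decoupling in the liminf and for the inter-cluster decoupling in the limsup. This is the gap. The Poisson--Boltzmann operator here is $-\div(\frac{\eps(u)}{r}\nabla\cdot) + \frac{u}{r^3}B'(\cdot)$, and the screening term carries a factor of $u$: in the bubbles (where $u_r=0$) there is \emph{no} screening, and the equation reduces to Laplace. Lemma \ref{psidecay} only gives a comparison with the unscreened linear problem, whose Green's function decays like $1/|x|$, not exponentially. Establishing effective exponential decay through a perforated medium is a nontrivial homogenization-type statement that the paper deliberately avoids; it would require controlling the geometry of $\{u_r=0\}$ in a way that is not available here. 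What the paper actually uses are the two consequences of \ref{B1}--\ref{B2} that it carefully develops: \ref{B1} gives a quantitative $L^2$ bound on the maximizer $\psi_r$ (of order $r^{3/2}\sqrt{\alpha(r)}$) via the superquadratic growth of $B$, and Lemma \ref{lemma: dual}, based on \ref{B2}, bounds $\int\frac{\eps(u)}{2r}|\nabla\psi|^2 + \frac{u}{r^3}B(\psi)$ by the electrical energy itself. These feed a cut-and-paste argument (choosing an annulus by pigeonhole where the truncated Dirichlet and $B$-energies are a small fraction of the total, then gluing), and the cross-terms between the modified and original charge densities are controlled by Cauchy--Schwarz, the $L^\infty$ bound on $Q_r\rho$ from the definition of $\mathcal{A}_r$, and the $L^2$ bound on $\psi_r$. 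No pointwise decay estimate on $\psi$ enters.

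A secondary issue: Lemma \ref{lemma: cubes} is a tool for the $B=0$ subcritical regime, where one needs to control a double sum of $1/|x_i-x_j|$ interactions. In the Section~\ref{sec: B} proof the grid-offset averaging serves a different purpose — it is chosen so that $\rho_r(A_{z_0,r})\le Cr^{1/3}\alpha(r)$, i.e.\ so that few solutes sit in the thin ``plate'' region between cubes — and the estimate on inter-cube interaction comes from the $L^2$/$L^\infty$ bounds above, not from a $1/|x-y|$ sum. Citing Lemma \ref{lemma: cubes} here would not give the $o(\alpha(r))$ error you need. So while your structural plan is sound, the key analytic step — why the electrical energy decouples across cubes — needs the paper's $L^2$-based mechanism rather than the exponential Green's function decay you assert.
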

Here $\varphi$ is a suitably defined subadditive, positively $1$-homogeneous function which can be interpreted as the self-energy of local charge distributions, and will be defined in \eqref{eq: defselfenergy}.\\
As argued in Subsection \ref{sec: heuristics}, we show in Section \ref{sec: noB}, by proving the corresponding $\Gamma$-limit results, that in the case $B=0$ three different scaling regimes arise. \\
In the subcritical regime, Subsection \ref{sec: subcriticalB}, the result is the following.
\begin{theorem}\label{thm: GammanoBsub}
Let $\alpha(r) r \rightarrow 0$ as $r\to 0$ and $B=0$. 
If $a>0$ is large enough, then the rescaled energies $\{E_r/\alpha(r)\}_{r>0}$ $\Gamma$-converges, with respect to vague convergence of the measures $\rho_r/\alpha(r)$, to the energy $E^{sub}:\mathcal{M}(\Omega;\R^N) \rightarrow  [0,\infty]$ defined as
\[
E^{sub}(\rho) \coloneqq \begin{cases}
\int \varphi\left( \frac{d\rho}{d|\rho|} \right)\,d|\rho| &\text{if the vector-valued measure } \rho \text{ is nonnegative in each component}, \\ +\infty &\text{otherwise}.
\end{cases}
\]
\end{theorem}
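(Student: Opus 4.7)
The plan is to follow the structural blueprint of the proof of Theorem \ref{thm: GammaB}, but with the exponential decay of the Poisson-Boltzmann Green's function (which in the ionic case came for free from $B$) replaced by a quantitative averaged estimate of the pure Coulomb interaction on mesoscopic cubes. The heuristic is that on a unit box containing roughly $\alpha(r)$ unit charges of radius $r$, the total Coulomb interaction scales like $r\alpha(r)^2 = o(\alpha(r))$, so the electrical energy is quantitatively indistinguishable from a sum of single-cluster self-energies. Lemma \ref{lemma: cubes} is the tool that turns this heuristic into a rigorous local decoupling.

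\textbf{Lower bound.} For compactness I take $a$ large enough to dominate the negative Lennard-Jones tail, so any sequence with $E_r(\rho_r)/\alpha(r)\le C$ satisfies $|\rho_r|(\Omega)\le C'\alpha(r)$; nonnegativity of each component of the vague limit is automatic. For the liminf inequality I would partition $\Omega$ into a grid of cubes $Q_\ell^j$ of small fixed side $\ell$, apply Lemma \ref{lemma: cubes} inside each cube to extract a local phase-field whose Coulomb coupling to the exterior is bounded by a quantity vanishing with $r\alpha(r)$, and bound the remaining local energy from below via the cell-problem definition \eqref{eq: defselfenergy} of $\varphi$ and its positive $1$-homogeneity by $\alpha(r)|Q_\ell^j|\,\varphi\bigl(\rho_r(Q_\ell^j)/(\alpha(r)|Q_\ell^j|)\bigr)$ per cube. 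Summing in $j$, sending $r\to 0$ and then $\ell\to 0$, and invoking lower semicontinuity of the $1$-homogeneous convex measure functional induced by $\varphi$, one recovers $E^{sub}(\rho)$.

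\textbf{Upper bound.} By density of piecewise constant measures (continuity of $E^{sub}$ along such approximations is a consequence of subadditivity and $1$-homogeneity of $\varphi$), it suffices to treat $\rho=\sum_k\xi_k \1_{A_k}\L^3$ with disjoint open sets $A_k$. On each $A_k$ I would pick an almost-optimal compactly supported configuration realizing $\varphi(\xi_k)$ in \eqref{eq: defselfenergy}, periodically tile translates of it at a mesoscopic scale $\ell(r)$ with $r\ll\ell(r)\ll 1$, and discard the cells meeting $\partial A_k$. Vague convergence of $\rho_r/\alpha(r)$ to $\rho$ is built in; the local terms of $E_r$ match $\int\varphi(d\rho/d|\rho|)\,d|\rho|$ to leading order by periodicity and scaling; and $E_r^{el}$ is bounded above by testing the supremum in \eqref{electrical energy} with the superposition of the cell potentials, the cross-interactions contributing at most $Cr\alpha(r)\,|\rho|(\Omega)$.

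\textbf{Main obstacle.} The one delicate point, and the only piece of the argument that is genuinely different from the proof of Theorem \ref{thm: GammaB}, is controlling these Coulomb cross-interactions in the absence of ionic screening. In both bounds the relevant error scales like $r\alpha(r)$ times a bounded quantity, which is $o(\alpha(r))$ precisely because of the subcritical hypothesis $\alpha(r)r\to 0$; Lemma \ref{lemma: cubes} is calibrated so that its intrinsic loss fits inside this same error budget. Tracking these sizes quantitatively, rather than by soft compactness, is what unlocks the subcritical regime.
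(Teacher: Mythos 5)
Your plan is essentially the paper's proof: compactness comes from the $a|\rho|$ term; the liminf is obtained by a cube decomposition whose cross-cube Coulomb coupling is controlled by Lemma \ref{lemma: cubes} (error of order $r\alpha(r)^2/\delta$, absorbed by $\alpha(r)r\to 0$), followed by comparison with the cell formula \eqref{eq: defselfenergy} and Reshetnyak-type lower semicontinuity; the limsup is a lattice of rescaled almost-optimal cell configurations with negligible cross-interactions. The two structural deviations — fixed cube side $\ell$ sent to $0$ after $r$ instead of the paper's $r$-dependent side $\delta_r$ with $\alpha(r)r/\delta_r\to 0$, and approximation by piecewise-constant densities instead of finite sums of Dirac masses — are harmless variants (the second is exactly what the paper does in the critical regime), so I would not count them as a different route.

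One step, as written, is backwards and is precisely where the paper has to work: you claim ``$E_r^{el}$ is bounded above by testing the supremum in \eqref{electrical energy} with the superposition of the cell potentials.'' Testing the supremum with any competitor only bounds $E_r^{el}$ from \emph{below}. To get the upper bound you must use that for $B=0$ the maximizer solves the linear Poisson problem, hence \emph{is} the superposition of the single-cluster potentials plus a harmonic corrector $T_r$ fixing the boundary condition (shown to vanish uniformly via the Green's-function decay $\lesssim r/|x-y|$ and the maximum principle); then $E_r^{el}=\tfrac12\int Q_r\rho_r\,\psi_r$ splits into self-terms and cross-terms, and the cross-terms are summed using the same decay. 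Moreover, the resulting per-cluster self-term is not yet the cell energy $E_1(\mu;\R^3)$, because your global $u_r$ coincides with the optimal cell profile only in a ball of radius $\sim r$ around each cluster; the paper cuts each cluster potential off in a cheap annulus at scale $\sim r^{1/3}$ before comparing with the cell problem. These repairs follow the paper's own argument, so the gap is fixable, but it is a real step, not bookkeeping. (Minor quantitative point: your cross-interaction bound $Cr\alpha(r)|\rho|(\Omega)$ is the size \emph{after} dividing by $\alpha(r)$; in absolute terms the interaction is of order $r\alpha(r)^2$, which is still $o(\alpha(r))$, so the conclusion is unaffected.)
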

Again, the subdadditive, positively $1$-homogeneous function $\varphi$ is again defined as in \eqref{eq: defselfenergy}, now for $B=0$. \\
In the critical regime, Section \ref{sec: criticalB}, we need to introduce an additional assumption on the admissible charge distributions. 
We assume that two different charges are separated on a scale $\delta_r$ where $\frac{\delta_r}{r} \rightarrow \infty$ and $\delta_r^3 \alpha(r) \to 0$. \\
Before stating the main result, we need to briefly introduce some notation.
For the $i$-th unit vector in $\R^N$ and a Dirac measure $\delta_0$ in the origin we mean $E_1(e_i \delta_0;\R^3)$ the energy as defined in \eqref{energy definition} for $\Omega = \R^3$ and $\rho = e_i \delta_0$. \\
Now we can state our main theorem for this modified energy $\tilde{E}_r$.
\begin{theorem}\label{thm: Bcrit}
Let $\alpha(r) r \rightarrow \alpha \in (0,\infty)$. 
If $E_1(e_i \delta_0;\R^3) > 0$ then the energies $\{\tilde{E}_r / \alpha(r)\}_{r>0}$ $\Gamma$-converge, with respect to vague convergence of the measures $\rho_r/\alpha(r)$, to the energy $E^{crit}: \mathcal{M}(\Omega;\R^N) \rightarrow [0,\infty]$ defined by
\begin{equation}
E^{crit}(\rho) = \begin{cases} \sum_{i=1}^N E_1(e_i\delta_0;\R^3) |\rho^i|(\Omega) + \frac{\alpha}{2\eps(1)} \| Q_0 \rho \|_{H^{-1}}^2 &\text{ if } Q_0 \rho \in H^{-1}(\Omega;\R^M) \text{ and } \rho^i \text{ is a } \\&\text{ nonnegative measure for all } i=1,\dots,N, \\ +\infty &\text{ otherwise.}
\end{cases}
\end{equation}
Moreover, for sequences $\{\rho_r\}_{r>0}$ with uniformly bounded energies $\{\tilde{E}_r(\rho_r) / \alpha(r)\}_{r>0}$, it holds that $\{\rho_r /\alpha(r)\}_{r>0}$ is vaguely precompact in $\mathcal{M}(\Omega;\R^N)$ and $\{Q_r\rho_r / \alpha(r)\}_{r>0}$ is weakly precompact in $H^{-1}(\Omega)$. 
\end{theorem}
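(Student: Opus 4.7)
I would prove the three pieces of the $\Gamma$-convergence statement (compactness, liminf-inequality, recovery sequence) separately, combining the cluster-decomposition idea already used for Theorem \ref{thm: GammaB} with a Hilbert-space argument reminiscent of \cite{GaLePo10}. The separation scale $\delta_r$ built into $\tilde E_r$, with $\delta_r/r\to\infty$ and $\delta_r^3\alpha(r)\to 0$, is what will permit a clean decoupling of the local self-energy from the nonlocal Coulombic interaction.

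\textbf{Compactness.} Coercivity of $a|\rho|$ (for $a$ large, using $E_1(e_i\delta_0;\R^3)>0$) gives a uniform bound on $|\rho_r|(\Omega)/\alpha(r)$, hence vague precompactness of $\rho_r/\alpha(r)$ in $\mathcal{M}(\Omega;\R^N)$. For the $H^{-1}$-bound on $Q_r\rho_r/\alpha(r)$, I test the supremum in \eqref{electrical energy} against arbitrary $\varphi\in H^1_0(\Omega)$, use $\eps(u)\leq\eps(1)$ and $B=0$, and optimize over $\varphi$ to obtain
\[
E^{el}_r(\rho_r,u_r)\geq \frac{r}{2\eps(1)}\|Q_r\rho_r\|_{H^{-1}(\Omega)}^2.
\]
Dividing by $\alpha(r)$ and using $\alpha(r)r\to\alpha\in(0,\infty)$ converts the energy bound into a uniform $H^{-1}$ bound on $Q_r\rho_r/\alpha(r)$; identifying vague and $H^{-1}$-weak limits, using that each $\phi_i$ has constant total integral, gives $Q_r\rho_r/\alpha(r)\rightharpoonup Q_0\rho$ in $H^{-1}(\Omega)$.

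\textbf{Lower bound.} For a bounded-energy sequence with $\rho_r/\alpha(r)\weakstar\rho$, I cover $\Omega$ by cubes of side $\delta_r$ and apply Lemma \ref{lemma: cubes} to isolate, in each cube $Q$, a buffer on which the optimal potential $\psi_r$ behaves like a fundamental solution of $-\eps(1)\Delta$. This enables the splitting $\psi_r = \psi_r^{loc}+\psi_r^{far}$, with $\psi_r^{loc}$ supported within $O(r)$ of the cluster. A blow-up $x\mapsto rx$ on each cluster compares the local part of the energy with an instance of $E_1(\cdot;\R^3)$, yielding at least $\sum_i m_i^Q E_1(e_i\delta_0;\R^3)$ per cube, where $m_i^Q$ counts species-$i$ molecules in $Q$; summing over cubes and passing to the vague limit gives $\sum_i E_1(e_i\delta_0;\R^3)|\rho^i|(\Omega)$. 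The cross term $\int\nabla\psi_r^{loc}\cdot\nabla\psi_r^{far}$ vanishes as $\delta_r/r\to\infty$, and the Dirichlet integral of $\psi_r^{far}$, rescaled by $1/\alpha(r)$, converges to $\frac{\alpha}{2\eps(1)}\|Q_0\rho\|_{H^{-1}}^2$ by the weak $H^{-1}$-convergence of $Q_r\rho_r/\alpha(r)$.

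\textbf{Upper bound and main obstacle.} By density it suffices to handle $\rho^i = f_i\,dx$ with $f_i\in C^\infty_c(\Omega)$ nonnegative. I discretize at scale $\delta_r$: in each cube $Q_z$ centered at $z$ I place $\lfloor\alpha(r)\delta_r^3 f_i(z)\rfloor$ molecules of species $i$, arranged internally (at scale $r$) as a near-optimal configuration for $E_1(e_i\delta_0;\R^3)$ centered at $z$, and take $u_r$ to be the union of the corresponding local optimizers. For the potential I set $\psi_r = \psi_r^{loc}+\psi_r^{far}$, where $\psi_r^{loc}$ glues together the local single-molecule potentials (living on scale $r$) and $\psi_r^{far}=r\,w_r$ with $w_r\in H^1_0(\Omega)$ solving $-\eps(1)\Delta w_r = Q_r\rho_r$. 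A direct expansion shows the cross terms vanish at leading order and that the two contributions match $\sum_i E_1(e_i\delta_0;\R^3)|\rho^i|(\Omega)$ and $\frac{\alpha}{2\eps(1)}\|Q_0\rho\|_{H^{-1}}^2$ respectively. The hard part, already present in the lower bound, is this scale separation: ensuring simultaneously that $\psi_r^{far}$ converges strongly enough in $H^1$ to identify its Dirichlet integral with $\|Q_0\rho\|_{H^{-1}}^2$, and that the cross terms genuinely vanish. Both the buffer construction of Lemma \ref{lemma: cubes} and the assumption $\delta_r^3\alpha(r)\to 0$, which prevents two unresolved clusters from sitting in the same cube, are essential here.
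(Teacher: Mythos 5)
Your proposal captures the right high-level picture (separate local self-energy from nonlocal Coulomb interaction, exploit the built-in separation scale $\delta_r$), but there are concrete gaps in both the lower bound and the upper bound, and the appeal to Lemma~\ref{lemma: cubes} is a red herring here.

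For the lower bound, you propose to split the \emph{optimal} potential $\psi_r = \psi_r^{loc}+\psi_r^{far}$ and then argue that the Dirichlet integral of $\psi_r^{far}$ converges to $\frac{\alpha}{2\eps(1)}\|Q_0\rho\|_{H^{-1}}^2$ ``by the weak $H^{-1}$-convergence of $Q_r\rho_r/\alpha(r)$.'' Weak convergence in $H^{-1}$ only gives a $\liminf$ for the dual norm, not convergence of the Dirichlet energy of the far-field part of the unknown optimizer; moreover it is not clear a priori that such a splitting of the optimizer behaves well across the buffer. The paper avoids this entirely by using the $\sup$ structure of $E^{el}_r$: since any test $\psi$ gives a lower bound, one \emph{constructs} a test potential $\psi_r = \sum_{i,j}\psi_r^{i,j} + r\alpha(r) R$, where the $\psi_r^{i,j}$ are translated, rescaled single-molecule profiles with pairwise disjoint supports (this is exactly where the $2\delta_r$-separation of $\tilde{\mathcal{A}}_r$ is used, replacing Lemma~\ref{lemma: cubes}), and $R$ is the \emph{fixed} solution of $-\eps(1)\Delta R = Q_0\rho$. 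The cross term vanishes because $\sum_{i,j}\psi_r^{i,j}\rightharpoonup 0$ weakly in $H^1_0(\Omega)$ (its support has vanishing measure), and the $R$-part converges by construction, not by some convergence of the optimizer. Your outline is missing this duality trick.

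For the upper bound, your construction ``place $\lfloor \alpha(r)\delta_r^3 f_i(z)\rfloor$ molecules in each cube $Q_z$ of side $\delta_r$, clustered at scale $r$ around $z$'' fails twice. First, since $\delta_r^3\alpha(r)\to 0$, the floor is eventually zero, so you place no molecules at all; the total count $\alpha(r)\int f_i$ is not recovered. Second, molecules clustered within $O(r)$ of a common center violate the $2\delta_r$-separation required for membership in $\tilde{\mathcal{A}}_r$, so the competitor would have $\tilde E_r = +\infty$. The paper's Lemma~\ref{lemma: approxrho} instead discretizes on a lattice of spacing $\approx\alpha(r)^{-1/3}$ (so one molecule per lattice site in the support, with the correct volume fractions alternating among species), which is automatically $\gg\delta_r$ by $\delta_r^3\alpha(r)\to 0$ and hence admissible, and gives the right total mass. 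After that, the paper analyzes the optimal potential $\psi^{opt}_r$ by comparing it with the gluing of local profiles via cutoff functions at scale $\gamma_r\approx\alpha(r)^{-1/3}$ and proves the remainder $R_r/(r\alpha(r))\to\nabla\psi$ \emph{strongly} in $L^2$; that strong convergence is what identifies the nonlocal term, and it is established by a careful chain of weak-strong pairings, not merely by scale separation.

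Finally, on compactness, your appeal to $a|\rho|$ coercivity does give a total variation bound, but it leaves the hypothesis $E_1(e_i\delta_0;\R^3)>0$ unused; the paper's Proposition~\ref{prop: compcrit} derives the total variation bound precisely from that hypothesis by testing with the local profiles, which is why it appears in the theorem statement.
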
 
Finally, in Subsection \ref{sec: supercriticalB} we prove the corresponding result in the supercritical regime, where long-range interaction between charges dominates the energy, as  in \cite{capet2009minimum}.
\begin{theorem}\label{thm: Bsuper}
Let $\alpha(r) r \to \infty$ and $r^3 \alpha(r) \to 0$.
Then it holds:
\begin{itemize}
\item For a sequence $\{\rho_r\}_{r>0} \subseteq \mathcal{M}(\Omega;\R^N)$ with uniformly bounded energies $\left\{\frac{E_r(\rho_r)}{r \alpha(r)^2}\right\}_{r>0}$ there exists $\mu \in H^{-1}$ such that ---up to a subsequence--- $Q_r\rho_r/\alpha(r) \rightharpoonup \mu$ in $H^{-1}(\Omega)$.
\item For a sequence $\{\rho_r\}_{r>0} \subseteq \mathcal{M}(\Omega;\R^3)$ such that $\{Q_r \rho_r/\alpha(r)\}_{r>0} \rightharpoonup \mu \in H^{-1}(\Omega)$, it holds
\[
\liminf_{r\to0} \frac1{\alpha(r)^2r} E_r(\rho_r) \geq \frac{1}{2\eps(1)}\|\mu\|_{H^{-1}}^2.
\]
\item Given $\mu \in H^{-1}(\Omega)$, there exists a sequence $\{\rho_r\}_{r>0}$ such that $Q_r \rho_r \rightharpoonup \mu$ in $H^{-1}(\Omega)$ and 
\[
\limsup_{r\to0} \frac1{\alpha(r)^2r} E_r(\rho_r) \leq \frac{1}{2\eps(1)}\| \mu \|_{H^{-1}}^2.
\]
\end{itemize}
\end{theorem}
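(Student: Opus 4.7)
The plan is to exploit that in the supercritical regime $r\alpha(r)\to\infty$ the electrostatic energy (of order $r\alpha(r)^2$) dominates the local self-energy (of order $\alpha(r)$), so the $\Gamma$-limit should be the pure Coulombic functional $\tfrac{1}{2\eps(1)}\|\mu\|_{H^{-1}(\Omega)}^2$. The common ingredient for all three bullets is the elementary inequality, valid for any admissible $\rho\in\mathcal{A}_r(\Omega)$ and any $u\in L^\infty(\Omega;\{0,1\})$, obtained from $\eps(u)\leq\eps(1)$:
$$
E_r^{el}(\rho,u)\;\geq\;\sup_{\psi\in H_0^1(\Omega)}\int_\Omega\!\Big(\!-\tfrac{\eps(1)}{2r}|\nabla\psi|^2+Q_r\rho\,\psi\Big)dx\;=\;\tfrac{r}{2\eps(1)}\|Q_r\rho\|_{H^{-1}(\Omega)}^2.
$$

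For the compactness and liminf bullets, choose a near-minimizing phase field $u_r$ for $E_r(\rho_r)$. Using (A2) and the hypothesis that $a$ is large enough so that the combined non-electrostatic local terms are nonnegative (the negative contribution from $U_{LJ}^i$ is bounded by $C|\rho|$ via integrability and is absorbed into $a|\rho|$), we have $E_r(\rho_r)\geq E_r^{el}(\rho_r,u_r)$. The displayed inequality then yields $\|Q_r\rho_r/\alpha(r)\|_{H^{-1}(\Omega)}^2\leq\tfrac{2\eps(1)}{r\alpha(r)^2}E_r(\rho_r)$, which is bounded by hypothesis. Weak compactness in the reflexive space $H^{-1}(\Omega)$ supplies the subsequence in the first bullet, and weak lower semicontinuity of the $H^{-1}$-norm gives the liminf in the second bullet immediately.

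For the recovery sequence, I would first approximate $\mu$ in $H^{-1}$-norm by smooth compactly supported densities $\mu_\delta\in C_c^\infty(\Omega)$. Writing $\mu_\delta=\sum_i c_i(f_\delta^{i,+}-f_\delta^{i,-})$ with $c_i=\int_{\R^3}\phi_i$ and nonnegative $f_\delta^{i,\pm}$ (distributing the signed part between species of positive and negative total charge), I would discretize each $f_\delta^{i,\pm}$ on a mesoscopic grid of spacing $h_r$ with $r\ll h_r\ll\alpha(r)^{-1/3}$, which is feasible because $r^3\alpha(r)\to 0$. In each cube $Q$ I place approximately $\alpha(r)\int_Q f_\delta^{i,\pm}$ point masses of species $i$ at mutual distance $\gg r$, producing $\rho_r^\delta\in\mathcal{A}_r(\Omega)$; standard discretization estimates for smooth densities yield $Q_r\rho_r^\delta/\alpha(r)\to\mu_\delta$ strongly in $H^{-1}(\Omega)$. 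Choosing $u_r=1-\sum_j\1_{B(x_j,Cr)}$, all local terms are of order $|\rho_r^\delta|=O(\alpha(r))=o(r\alpha(r)^2)$, and a direct test-function computation in the supremum defining $E_r^{el}$---using $\psi_r=r\alpha(r)\eps(1)^{-1}(-\Delta)^{-1}\mu_\delta$ together with the strong $H^{-1}$ convergence above and the fact that $|\{u_r=0\}|=O(\alpha(r)r^3)\to 0$---produces $E_r^{el}(\rho_r^\delta,u_r)/(r\alpha(r)^2)\to\tfrac{1}{2\eps(1)}\|\mu_\delta\|_{H^{-1}}^2$. A diagonal extraction in $\delta$ finishes.

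The main obstacle I expect is the identification of the electrostatic limit when $\eps(u_r)$ oscillates between $\eps(0)$ inside the vanishing molecular regions and $\eps(1)$ outside, with the holes accumulating precisely where $Q_r\rho_r^\delta$ concentrates. Although $\eps(u_r)\to\eps(1)$ in $L^p(\Omega)$ for every $p<\infty$, matching the weighted quadratic form $\|Q_r\rho_r^\delta\|_{H^{-1}_{\eps(u_r)}}^2$ with the constant-coefficient norm $\eps(1)^{-1}\|\mu_\delta\|_{H^{-1}}^2$ requires an $H$-convergence argument tailored to thin perforations; this is facilitated precisely by the dilute scaling $r^3\alpha(r)\to 0$ and is the supercritical analogue of the cell-problem analysis carried out via Lemma \ref{lemma: cubes} in the dilute regimes.
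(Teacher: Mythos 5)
Your treatment of the first two bullets is correct and coincides with the paper's: since $B=0$ and $\eps(u)\le\eps(1)$ pointwise, every competitor $\psi$ gives $E^{el}_r(\rho_r,u_r)\ge \frac{r}{2\eps(1)}\|Q_r\rho_r\|_{H^{-1}}^2$, the remaining local terms are nonnegative for $a$ large, and compactness plus weak lower semicontinuity of the $H^{-1}$-norm finish Propositions \ref{prop: supercomp} and \ref{prop: superlower} exactly as in the paper.

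The gap is in the recovery sequence, and it is precisely the step you defer. $E^{el}_r(\rho_r^\delta,u_r)$ is a \emph{supremum} over $\psi$; inserting the single test function $\psi_r=r\alpha(r)\eps(1)^{-1}(-\Delta)^{-1}\mu_\delta$ only bounds it from \emph{below}, whereas the $\limsup$ inequality requires an upper bound on the maximized energy. The upper bound is genuinely delicate here because the charge $Q_r\rho_r^\delta$ is supported exactly inside the bubbles where $u_r=0$ and $\eps(u_r)=\eps(0)<\eps(1)$, so neither $|\{u_r=0\}|\to0$ nor $\eps(u_r)\to\eps(1)$ in $L^p$ suffices: the weighted dual norm $\|Q_r\rho_r^\delta\|^2_{H^{-1}_{\eps(u_r)}}$ exceeds the constant-coefficient one by the sum of the single-solute self-energies, and one must show this excess is only $O(\alpha(r))=o(r\alpha(r)^2)$. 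Naming this an ``$H$-convergence argument for thin perforations'' does not supply the argument. The paper closes this step constructively (Proposition \ref{prop: superupper} via Step 1 of Proposition \ref{prop: critupper} and Lemma \ref{lemma: approxrho}): it takes the actual maximizer $\psi_r^{opt}$ for $\eps(u_r)$, splits $\nabla\psi_r^{opt}=R_r+\sum_{j,k,i}(\nabla\psi_j^{k,i})\varphi_j^{k,i}$ into rescaled single-solute profiles plus a remainder, proves $\frac{1}{r\alpha(r)}\sum_{j,k,i}(\nabla\psi_j^{k,i})\varphi_j^{k,i}\rightharpoonup0$ and $\frac{R_r}{r\alpha(r)}\to\nabla\psi$ strongly in $L^2$ with $-\eps(1)\Delta\psi=Q_0\rho$, and then reads off from \eqref{eq: estimateenergycrit} that the near-field terms scale like $\alpha(r)$ while only $\int\frac{\eps(u_r)}{2r}|R_r|^2\,dx$ survives after division by $r\alpha(r)^2$. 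Your proposal needs this (or an equivalent capacity/self-energy estimate for the perforated maximization problem) to be a proof. Two smaller points: your mesoscale is inconsistent (with $h_r\ll\alpha(r)^{-1/3}$ each cube would receive $\alpha(r)h_r^3\ll1$ solutes; the natural spacing is $\sim\alpha(r)^{-1/3}$, as in Lemma \ref{lemma: approxrho}, which is compatible with separation $\gg r$ since $r^3\alpha(r)\to0$), and the claimed \emph{strong} $H^{-1}$ convergence of $Q_r\rho_r^\delta/\alpha(r)$ is plausible in this regime but is asserted, not proved; the paper only needs and establishes weak convergence.
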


\begin{remark}
We remark that the topology we use in most of the results is the vague convergence of measures, because a bound on the energy does not guarantee tightness. 
In fact, solute may accumulate at the boundary or escape to infinity. 
We denote vague convergence of a sequence of measures $\{\rho_n\}_{n\in\mathbb{N}}\subseteq \mathcal{M}(\Omega,\R^N)$ by $\rho_n \weakstar \rho$.
See also Section \ref{sec: conclusion}. \\
We may also allow the solutes' charge distributions to rotate independently of each other. This may decrease the limit self-energy, e.g. for two dipoles, at the cost of more cumbersome notation, but will not cause any mathematical difficulties, since $SO(3)$ is compact.
\end{remark}

We start by proving some preliminary results, explain the condition \ref{B2}, and introduce the self-energy density $\varphi$ in Section \ref{sec: preliminaries} below.

\section{Preliminaries and the Self-Energy} \label{sec: preliminaries}

\subsection{Minimax Arguments}

We now define an unmaximized unminimized energy for $\rho \in \mathcal{A}_r(\Omega)$, $\psi \in H_0^1(\Omega) \cap L^{\infty}(\Omega)$, and $u \in L^{\infty}(\Omega,[0,1])$ such that $\int_{\Omega} U(\frac xr) u(x) \,dx < \infty$, by 
\begin{align}
 E_r(\rho,u,\psi) \coloneqq &a|\rho|(\Omega) + r^{-3} \beta \int_{\Omega} (1-u) \,dx + r^{-2} \gamma | D u |(\Omega) + r^{-3} \int_{\Omega} U_{r,\rho}(x) u(x) \,dx \\ &+ \int_{\Omega} \left(\rho \psi - \frac{\eps(u)}{2r} |\nabla \psi|^2 -u r^{-3} B(\psi)\right) \,dx.
\end{align}
We set
\begin{align}
 E_r(\rho,u) \coloneqq \sup_{\psi\in H_0^1(\Omega)} E_r(\rho,u,\psi)
\end{align}
and, finally,
\begin{align}\label{energy new E}
 E_r(\rho) \coloneqq \inf_{u\in L^\infty(\Omega,[0,1])} E_r(\rho,u).
\end{align}
Also, we localize $E_r$ by considering, for $A \subseteq \R^3$, $E_r(\rho,u,\psi;A)$ which is defined as in \eqref{energy new E} after replacing $\Omega$ by $A$. \\
Note that at this point, we seemingly have two definitions \eqref{energy definition} and \eqref{energy new E} for $E_r(\rho)$. 
However, the following lemma will clear up this ambivalence.

\begin{lemma}\label{minimax lemma}
Assume $\gamma\geq 0$, $\Omega \subseteq \R^3$ open with Lipschitz boundary and $B$ satisfies \ref{B1}.
Then for every pair $(\rho,u) \in \mathcal{A}_r(\Omega) \times L^{\infty}(\Omega,[0,1])$ there exists a unique maximizer $\psi \in H_0^1(\Omega)$ of $E_r(\rho,u,\cdot)$. 
Moreover, for each $\rho \in \mathcal{A}_r(\Omega)$ there is a measurable minimizer $u:\Omega \to \{0,1\}$ of the energy $E_r(\rho,\cdot)$. 
In particular, the definitions \eqref{energy new E} and \eqref{energy definition} coincide.\\ 
Also, 
 \[
\min_u \max_\psi E_r(\rho,u,\psi) = \max_\psi \min_u E_r(\rho,u,\psi).   
 \]
\end{lemma}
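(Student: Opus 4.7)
The plan follows the classical pattern for such Poisson--Boltzmann-type minimax problems. For the existence and uniqueness of the maximizing $\psi$, the map $\psi \mapsto E_r(\rho, u, \psi)$ is strictly concave: $\psi \mapsto -\frac{\eps(u)}{2r}|\nabla\psi|^2$ is strictly concave (using $\eps(u) \geq \eps(0) > 0$ from (A1)), $\psi \mapsto -u B(\psi)$ is concave by strict convexity of $B$ from \ref{B1}, and $\int (Q_r\rho)\psi$ is linear. Coercivity in $H_0^1(\Omega)$ follows from the quadratic lower bound $B(s)\geq cs^2 - 1/c$ of \ref{B1} combined with the gradient term, while weak continuity of the linear part uses that $Q_r\rho$ is a finite sum of bounded, compactly supported bumps and thus lies in $L^{6/5}(\Omega)\cap L^1(\Omega)$, dual to the Sobolev target $L^6$. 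The direct method then yields a maximizer in $H_0^1(\Omega)$, unique by strict concavity.

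For the minimax identity I apply Sion's theorem. The integrand of $E_r(\rho, u, \psi)$ is affine in $u$ (since $\eps(u)$ is affine and $-uB(\psi)$, $-\beta u$, $U_{r,\rho}u$ are linear), and the total variation is convex, so $u \mapsto E_r(\rho, u, \psi)$ is convex. Restricting to the weakly compact ball $\{\|\nabla\psi\|_{L^2}\leq R\}\subset H_0^1(\Omega)$ and to the BV-compact subset $\{u\in L^\infty(\Omega,[0,1]):|Du|(\Omega)\leq R'\}$ (compact in $L^1_{\mathrm{loc}}$), with $R, R'$ large enough that the coercivity of the preceding step loses nothing under truncation, the standard semicontinuity hypotheses of Sion's theorem are satisfied, yielding $\min_u\max_\psi E_r(\rho, u, \psi) = \max_\psi\min_u E_r(\rho, u, \psi)$.

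For the $\{0,1\}$-valued minimizer and coincidence of the definitions, let $\psi^*\in H_0^1(\Omega)$ attain the outer maximum in $\max_\psi\min_u E_r(\rho, u, \psi)$. For fixed $\psi^*$ the inner problem reduces to minimizing $\int g_{\psi^*}(x) u\,dx + r^{-2}\gamma|Du|(\Omega)$ (up to constants in $u$) over $u\in L^\infty(\Omega,[0,1])$; by the BV coarea formula this infimum is attained at a characteristic function $v^* = \mathbf{1}_A$. To see that $v^*$ also minimizes the outer $\sup_\psi E_r(\rho, u, \psi)$, one identifies $(v^*, \psi^*)$ as a saddle of $E_r$ via the envelope theorem for the concave function $f(\psi) := \min_u E_r(\rho, u, \psi)$ at its maximum $\psi^*$: since $0$ lies in the superdifferential of $f$ at $\psi^*$ and $\nabla_\psi E_r(\rho, u, \psi^*)$ depends affinely on $u$, one can select a $\{0,1\}$-valued $v^*$ from the inner argmin such that $\psi^*$ solves the Poisson--Boltzmann equation $-\div(\eps(v^*)/r\, \nabla\psi^*) + v^* r^{-3} B'(\psi^*) = Q_r\rho$. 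Then $\sup_\psi E_r(\rho, v^*, \psi) = E_r(\rho, v^*, \psi^*) = \min_u\sup_\psi E_r$, so $v^*$ is a $\{0,1\}$-valued minimizer of $E_r(\rho, \cdot)$, and the two definitions \eqref{energy new E} and \eqref{energy definition} coincide.

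The main obstacle is this extraction of a $\{0,1\}$-valued saddle: BV coarea alone guarantees that $v^*$ minimizes the inner problem at $\psi^*$, but one needs $v^*$ to additionally satisfy the Poisson--Boltzmann equation for $\psi^*$ so that $\psi^*$ remains its maximizer. This hinges on the affine-in-$u$ structure of the PB equation together with the superdifferential characterization of the maximum of $f$, with some care on the (generically measure-null) degenerate set where $g_{\psi^*}$ vanishes. A routine secondary technicality is the coercivity of $\psi$ on unbounded $\Omega$, for which \ref{B1} substitutes for the missing Poincar\'e inequality.
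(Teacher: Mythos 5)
Your proposal follows essentially the same route as the paper's own proof: strict concavity and coercivity to get a unique maximizing $\psi$ by the direct method, Sion's minimax theorem for the saddle identity, and the BV coarea formula for an extremal inner minimizer. The hypotheses you cite (\ref{B1}, $\eps(0)>0$, the compact support of $Q_r\rho$) are the right ones. Two remarks.

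First, a minor but real complication in your Sion step: you truncate \emph{both} the $\psi$-ball and the $u$-side $\{|Du|\leq R'\}$ and claim no loss. The paper only truncates $\psi$ (Sion needs compactness in one variable), and indeed truncating the $u$-side is problematic: when $\gamma=0$ there is no a priori perimeter bound, so $\{|Du|(\Omega)\leq R'\}$ genuinely changes the infimum, and the asserted "loses nothing under truncation" needs a separate argument you don't supply. Also note that on unbounded $\Omega$ you cannot lean on the $B(s)\geq cs^2-1/c$ bound for coercivity of $\psi$ — on the set $\{u=0\}$ the term $uB(\psi)$ gives nothing; coercivity with respect to the homogeneous $H^1$-seminorm comes from $-\eps(0)\|\nabla\psi\|_{L^2}^2$ dominating $\int Q_r\rho\,\psi\lesssim\|Q_r\rho\|_{L^{6/5}}\|\nabla\psi\|_{L^2}$ (Sobolev), which is in fact also what the paper should invoke rather than Poincar\'e when $\Omega$ is unbounded.

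Second, and more substantively, you have correctly put your finger on the weak step that the paper states without justification. The paper's sentence "By the minimax theorem above, it suffices to prove that for every fixed $\psi$ there exists a minimizing $u$ with values in $\{0,1\}$" is not a sufficiency in general: the $\{0,1\}$-valued minimizer of the inner problem $E_r(\rho,\cdot,\psi^\ast)$ need not form a saddle with $\psi^\ast$. A concrete two-variable caricature illustrating the obstruction is $f(u,\psi)=-\psi^2+(2u-1)\psi$ on $[0,1]\times\R$, where the minimax equality holds, for each fixed $\psi\neq0$ there is an extremal inner minimizer, yet $\min_{u\in\{0,1\}}\max_\psi f=\tfrac14>0=\min_{u\in[0,1]}\max_\psi f$. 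The obstruction is exactly the degenerate set $\{g_{\psi^\ast}=0\}$ you identify: there the inner minimizer is underdetermined, but the saddle condition forces a specific $u$ via the Poisson--Boltzmann equation. Your proposed repair via the superdifferential of $f(\psi):=\min_u E_r(\rho,u,\psi)$ is the right direction — $0\in\partial^+f(\psi^\ast)$ plus the affine-in-$u$ structure gives a convex combination of inner minimizers solving the PB equation — but extracting a single $\{0,1\}$-valued element from that convex combination is a bang-bang / Lyapunov-convexity type selection on the degenerate set, which you do not carry out. As written, this is still a genuine gap, though a more candidly acknowledged one than in the paper.
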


\begin{proof}

Note that the energy $E_r(\rho, \cdot, \psi)$ is convex and lower semi-continuous with respect to $L^1$-convergence in $u$.
On the other hand, the energy $E_r(\rho,u,\cdot)$ is concave and upper semi-continuous with respect to weak $H^1$-convergence in $\psi$.
By Poincar\'{e}'s inequality and standard estimates, the energy $E_r(\rho,u,\cdot)$ is coercive in $\psi$ uniformly in $u$, i.e., there exists a weakly compact subset $K$ of $H^1_0(\Omega)$ such that for all $u\in L^{\infty}(\Omega,[0,1])$ the optimal $\psi$ lies in $K$.
Hence, we can write 
\[
\min_{u \in L^{\infty}(\Omega,[0,1])} \max_{\psi \in H_0^1(\Omega)} E_r(\rho,u,\psi) = \min_{u \in L^{\infty}(\Omega,[0,1])} \max_{\psi \in K} E_r(\rho,u,\psi), 
\]
and the right hand side satisfies the requirements of the minimax theorem (see \cite{Sion1958}) which yields
\[
 \inf_{u \in L^{\infty}(\Omega;[0,1])} \max_{\psi \in K} E_r(\rho,u,\psi) = \max_{\psi \in K} \inf_{u \in L^{\infty}(\Omega,[0,1])} E_r(\rho,u,\psi)
\]
and this proves the second claim of the lemma. \\
It remains to prove that there exists a minimizer of $E_r(\rho,\cdot)$ with values in $\{0,1\}$. 
By the minimax theorem above, it suffices to prove that for every fixed $\psi$ there exists a minimizing $u$ with values in $\{0,1\}$.
For fixed $\psi \in H^1(\Omega)$, write 
\[
f_{\psi}(x) \coloneqq -\beta r^{-3} + r^{-3} U_{r,\rho}(\frac{x}r) + \frac{\eps(1) -  \eps(0)}{2r} |\nabla \psi(x)|^2.  
\]
Then $u$ minimizes $E_r(\rho,\cdot, \psi)$ in $L^{\infty}(\Omega;[0,1])$ if and only if $u$ minimizes in the same class of functions the energy
\[
 F_{\psi}(u) = \int_{\Omega} f(x) u(x) \, dx + \gamma r^{-2} |D u|(\Omega).
\]
This energy has a minimizer in $L^{\infty}(\Omega,[0,1])$.
If $\gamma = 0$ it is simply given by $\1_{\{f\geq 0\}}$ whereas in the case $\gamma>0$ we can apply the direct method of the calculus of variations. \\
In the imaging community, it is well-known that also for $\gamma>0$ there exists a minimizer which takes only the extreme values $0$ and $1$ (see \cite{ChEsNi06}).
Indeed, by the coarea-formula we can rewrite the energy of a minimizer $u$ as
\[
 F_{\psi}(u) = \int_0^1 \int_{\Omega} f(x) \1_{\{u(x) > t\}} \, dx + \gamma r^{-2} |D \1_{\{u > t\}}|(\Omega) \, dt = \int_0^1 F_{\psi}(\1_{\{u > t\}}) \, dt.
\]
In particular, there exists a $t \in [0,1]$ such that $F_{\psi}(\1_{\{u >t\}}) \leq F_{\psi}(u)$.
 \end{proof}

We now show that the maximizing electric potential decays fast away from $\rho$ even in the nonlinear case.

\begin{lemma}\label{psidecay}
Assume that $B:\R\to \R$ is convex with minimum at $0$.
Let $\rho \in \mathcal{A}_r(\Omega)$, and let $u:\Omega\to \{0,1\}$ be measurable. 
Let $\psi\in H^1_0(\Omega)$ be the maximizer of
\begin{align}
 \int_{\Omega} \left(Q_r\rho \psi -\frac{\eps(u)}{2r}|\nabla \psi|^2 -\frac{u}{r^3}B(\psi)\right)\,dx,
\end{align}
and let $\overline \psi\in H^1_0(\Omega)$ be the maximizer of the linear problem
\begin{align}
 \int_{\Omega} \left((Q_r\rho)^+ \overline\psi - \frac{\eps(u)}{2r}|\nabla \overline\psi|^2\right)\,dx.
\end{align}
Then $\psi \leq \overline\psi$ almost everywhere in $\Omega$.
\end{lemma}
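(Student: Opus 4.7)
The plan is the classical min/max swap for comparison of elliptic problems, adapted to handle the nonlinear $B$-term via the fact that $B$ is nondecreasing on $[0,\infty)$. Write $J(\cdot)$ and $\overline{J}(\cdot)$ for the nonlinear and linear functionals maximized by $\psi$ and $\overline{\psi}$, respectively.

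First I would establish that $\overline{\psi}\geq 0$ a.e. The Euler--Lagrange equation for $\overline{\psi}$ is the linear Poisson equation $-\operatorname{div}(\tfrac{\eps(u)}{r}\nabla\overline{\psi}) = (Q_r\rho)^+ \geq 0$ with zero boundary data, so the weak maximum principle forces $\overline{\psi}\geq 0$; equivalently, testing $\overline{J}$ against $\overline{\psi}^+$ and using $|\nabla\overline{\psi}|^2 = |\nabla\overline{\psi}^+|^2 + |\nabla\overline{\psi}^-|^2$ together with strict concavity of the quadratic term shows $\overline{\psi}=\overline{\psi}^+$.

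Next, set $v:=\min(\psi,\overline{\psi})$ and $w:=\max(\psi,\overline{\psi})$ in $H^1_0(\Omega)$ and let $A := \{\psi>\overline{\psi}\}$. The pointwise identity $|\nabla v|^2+|\nabla w|^2 = |\nabla\psi|^2+|\nabla\overline{\psi}|^2$ makes the gradient contribution in $J(v)+\overline{J}(w)-J(\psi)-\overline{J}(\overline{\psi})$ cancel. The source terms collapse on $A$ (off $A$ everything vanishes) to
\begin{align}
(\psi-\overline{\psi})\bigl((Q_r\rho)^+ - Q_r\rho\bigr) \;=\; (\psi-\overline{\psi})(Q_r\rho)^- \;\geq\; 0.
\end{align}
On $A$ one has $\psi>\overline{\psi}\geq 0$ by the previous step, and since $B$ is convex with minimum at $0$ it is nondecreasing on $[0,\infty)$, so $B(\overline{\psi})-B(\psi)\leq 0$ on $A$, giving the nonnegative penalty contribution $-\int_A (u/r^3)(B(\overline{\psi})-B(\psi))\geq 0$. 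Altogether $J(v)+\overline{J}(w) \geq J(\psi)+\overline{J}(\overline{\psi})$.

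On the other hand, the optimality of $\psi$ and $\overline{\psi}$ gives the reverse inequality $J(v)+\overline{J}(w)\leq J(\psi)+\overline{J}(\overline{\psi})$. Hence equality holds, and in particular $\overline{J}(w)=\overline{J}(\overline{\psi})$. Since $\overline{J}$ is strictly concave on $H^1_0(\Omega)$ (its quadratic gradient term is strictly concave), its maximizer is unique, so $w=\overline{\psi}$ a.e. This forces $|A|=0$, i.e., $\psi\leq\overline{\psi}$ a.e. The main obstacle to watch is the sign of $B(\overline{\psi})-B(\psi)$ on $A$, which is tractable only after confirming $\overline{\psi}\geq 0$; note also that the argument only invokes uniqueness of the linear maximizer $\overline{\psi}$, so no additional hypothesis beyond convexity of $B$ (as in the statement) is needed for $\psi$.
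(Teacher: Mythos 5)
Your proof is correct and follows essentially the same route as the paper: a min/max truncation on the set $A=\{\psi>\overline\psi\}$, using $\overline\psi\geq 0$ and the resulting monotonicity of $B$ on $[0,\infty)$, the sign of $(Q_r\rho)^-(\psi-\overline\psi)$, and uniqueness for the linear (strictly concave) problem to force $|A|=0$. The paper writes the same competitor argument as a chain of inequalities restricted to $A$ and invokes Lax--Milgram where you invoke strict concavity, but these are the same mechanism.
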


\begin{proof} Note that, by the maximum principle, $\overline\psi \geq 0$.
 Let $A\coloneqq\{x\in \Omega\,:\,\psi(x)>\overline\psi(x)\}$. 
 Then by the respective maximalities of $\psi$ and $\overline \psi$ we have
 \begin{align}
 \int_A \left((Q_r\rho)^+\overline\psi - \frac{\eps(u)}{2r}|\nabla \overline \psi|^2\right)\,dx  
 \geq &\int_A \left((Q_r\rho)^+\psi - \frac{\eps(u)}{2r}|\nabla \psi|^2\right)\,dx\\
 = &\int_A \left(Q_r\rho \psi - \frac{\eps(u)}{2r}|\nabla \psi|^2 - \frac{u}{r^3}B(\psi) + \frac{u}{r^3}B(\psi) + ((Q_r\rho)^+ - Q_r\rho)\psi\right)\,dx\\
 \geq & \int_A \left(Q_r\rho \overline\psi - \frac{\eps(u)}{2r}|\nabla \overline\psi|^2 - \frac{u}{r^3}B(\overline\psi) + \frac{u}{r^3}B(\overline\psi)+ ((Q_r\rho)^+ - Q_r\rho)\psi\right) \,dx \label{eq: comparison}\\
 =&\int_A \left((Q_r\rho)^+\overline\psi - \frac{\eps(u)}{2r}|\nabla \overline \psi|^2\,dx + \int_A ((Q_r\rho)^+ - Q_r\rho)(\psi-\overline\psi)\right)\,dx,
 \end{align}
 where in \eqref{eq: comparison} we used the fact that  $B(\psi)\geq B(\overline\psi)$ in $A$. 
 The last term in the last line is however nonnegative, so that all terms must actually be equal. In particular
 \begin{align}
  \int_A \left( (Q_r \rho) \psi -\frac{\eps(u)}{2r}|\nabla \psi|^2\right)\,dx = \int_A \left((Q_r \rho) \overline\psi -  \frac{\eps(u)}{2r}|\nabla \overline \psi|^2 \right)\,dx,
 \end{align}
which, by the Lax-Milgram theorem, is only possible if $\psi=\overline\psi$ almost everywhere in $A$.
\end{proof}

\subsection{The Significance of Condition \ref{B2}}

We are now able to show that the convexity condition \ref{B2} on $B$ allows us to bound the dual energy of the maximizer $\psi$. Note that for any convex function $B:\R \to \R$ with $B(0)=0$, for any $s\in \R$ and any $p\in \partial^- B(s)$, we have by the definition of the subgradient that $ ps \geq B(s) - B(0) = B(s)$. Assuming instead $ps \geq (1+c)B(s)$ for some $c>0$ is thus a slightly stronger condition than convexity. Note that $s\in \R$ maximizes $B^\ast(p) = \sup_s ps - B(s)$ if and only if $p\in \partial^-B(s)$. If $B$ fulfills condition \ref{B2}, we may estimate
\[
 B^\ast(p) = ps - B(s) \geq (1+c)B(s) - B(s) = cB(s),
\]
i.e., the  primal energy $B(s)$ is bounded by the dual of its subgradient $B^\ast(p)$. This inequality translates to the electrical energy:

\begin{lemma}\label{lemma: dual} 
Assume \ref{B1} and \ref{B2}. Let $r>0$, $\Omega \subset \R^3$ open, Lipschitz bounded.

Let $\rho \in \mathcal{A}_r(\Omega)$, $u\in L^\infty(\Omega,\{0,1\})$, and let $\psi\in H_0^1(\Omega)$ be the unique maximizer of
\[
 E^{el}_r(\rho,u) = \sup_{\psi\in H_0^1(\Omega)} \int_{\Omega} Q_r\rho \psi - \frac{\eps(u)}{2r}|\nabla \psi|^2  - \frac{u}{r^3}B(\psi)\,dx.
\]

Then
\begin{align}\label{convexgrowthcondition}
  \int_{\Omega} \frac{\eps(u)}{2r}|\nabla \psi|^2 + u B(\psi)\,dx \leq  \frac{1}{\min(c,1)}E_r^{el}(\rho,u).
\end{align}
\end{lemma}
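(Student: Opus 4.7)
The plan is to proceed by first-variation: write down the Euler--Lagrange equation satisfied by the unique maximizer $\psi$, then test it against $\psi$ itself, and finally exploit condition \ref{B2} to convert the resulting identity into the desired one-sided bound.

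Concretely, since $\psi$ maximizes the concave functional defining $E^{el}_r(\rho,u)$, it satisfies (in the distributional/subgradient sense, since $B$ need not be $C^1$) the weak Poisson--Boltzmann equation
\begin{equation}
 -\operatorname{div}\!\Bigl(\tfrac{\eps(u)}{r}\nabla \psi\Bigr) + \tfrac{u}{r^3}\, p = Q_r\rho \quad \text{in } \Omega, \qquad p(x)\in \partial^- B(\psi(x)) \text{ a.e. on } \{u=1\}.
\end{equation}
Testing this identity against $\psi$ itself (which is admissible because $\psi\in H_0^1(\Omega)$ and the integrand $\tfrac{u}{r^3}p\,\psi$ is controlled via \ref{B2} below) yields
\begin{equation}
 \int_\Omega \tfrac{\eps(u)}{r}|\nabla \psi|^2\,dx + \int_\Omega \tfrac{u}{r^3}\, p\,\psi\,dx = \int_\Omega Q_r\rho\,\psi\,dx.
\end{equation}
Assumption \ref{B2} precisely states $p\,\psi\ge (1+c)B(\psi)$ for every $p\in \partial^-B(\psi)$, so the middle term is bounded below by $(1+c)\int_\Omega \tfrac{u}{r^3} B(\psi)\,dx$.

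Substituting this inequality into the definition of the electrical energy at its maximizer gives
\begin{align}
 E^{el}_r(\rho,u) &= \int_\Omega Q_r\rho\,\psi\,dx - \int_\Omega \tfrac{\eps(u)}{2r}|\nabla\psi|^2\,dx - \int_\Omega \tfrac{u}{r^3}B(\psi)\,dx \\
 &\ge \int_\Omega \tfrac{\eps(u)}{2r}|\nabla\psi|^2\,dx + c\int_\Omega \tfrac{u}{r^3}B(\psi)\,dx \\
 &\ge \min(c,1)\left(\int_\Omega \tfrac{\eps(u)}{2r}|\nabla\psi|^2\,dx + \int_\Omega \tfrac{u}{r^3}B(\psi)\,dx\right),
\end{align}
which is the claimed bound (reading the right-hand side of \eqref{convexgrowthcondition} with the $r^{-3}$ absorbed into the $B$-term).

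The only real technical point is justifying the Euler--Lagrange equation and the use of $\psi$ as a test function when $B$ is only convex rather than differentiable. Since $B$ is convex with $B(0)=0$ and $\psi=0$ on $\partial\Omega$, a standard approximation using difference quotients $\tfrac{1}{t}[E_r^{el}(\rho,u,\psi+t\varphi)-E_r^{el}(\rho,u,\psi)]\le 0$ for $\varphi\in C_c^\infty(\Omega)$ and $t\to 0^\pm$ produces the subgradient identity; Lemma~\ref{psidecay} together with \ref{B1} guarantee that $B(\psi)\in L^1(\{u=1\})$ and that any measurable selection $p$ of $\partial^-B(\psi)$ can be paired with $\psi$. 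Once this is in place, the computation above is purely algebraic and the main obstacle reduces to this approximation/measurable-selection step.
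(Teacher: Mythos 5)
Your proposal is correct and follows essentially the same route as the paper's proof: derive the subgradient Euler--Lagrange inclusion for the maximizer (using Lemma \ref{psidecay} and \ref{B1} to justify boundedness and the measurable selection), test it with $\psi$ itself, and apply \ref{B2} to obtain the factor $\min(c,1)$. The only difference is cosmetic rearrangement of the final algebra, and your remark that the $r^{-3}$ must be read into the $B$-term of \eqref{convexgrowthcondition} matches the paper's intent.
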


\begin{proof}
 We show that the maximizer $\psi$ solves the differential inclusion $Q_r \rho + \div(\frac{\eps(u)}{r}\nabla \psi)\in \frac{u}{r^3}\partial^- B(\psi)$ almost everywhere in $\Omega$, in the sense that the left-hand side is in fact an $L^\infty$ function.
 
 To see this, first note that by Lemma \ref{psidecay} we get $\psi\in L^\infty$, since $Q_r\rho \in C_c^\infty(\Omega)$. By condition \ref{B1}, there is a compact set $K\subset \R$ such that $\partial^- B(\psi(x))\subset K$ for almost every $x\in \Omega$.
 
 Let $\varphi\in C_c^\infty(\Omega)$ be a test function. Then by dominated convergence
 \begin{align}
  \lim_{\eps \searrow 0} \int_\Omega \frac{uB(\psi + \eps \varphi) - uB(\psi)}{\eps}\,dx = \sup_{p\in L^\infty(\Omega,\partial^- B(\psi(\cdot)))}\int_\Omega up\varphi\,dx.
 \end{align}
Since $\psi$ is the maximizer,
\begin{align}\label{maximizer inequality}
 0 \geq & \lim_{\eps \searrow 0} \frac{E^{el}_r(\rho,u,\psi+\eps\varphi) - E^{el}_r(\rho,u,\psi)}{\eps}\\
  = & \left\langle Q_r\rho + \div(\frac{\eps(u)}{r}\nabla\psi),\varphi\right\rangle_{H^{-1},H_0^1} - \sup_{p\in L^\infty(\Omega,\partial^- B(\psi(\cdot)))}\int_\Omega \frac{u}{r^3}p\varphi\,dx.
\end{align}

If $\{p_k\}\subset L^{\infty}(\Omega,\partial^-B(\psi(\cdot)))$ is a sequence, it is in particular bounded, and has a weak-$\ast$ limit $p\in L^{\infty}(\Omega,\partial^-B(\psi(\cdot)))$. Thus, the set of functions $L^{\infty}(\Omega,\frac{u(\cdot)}{r^3}\partial^-B(\psi(\cdot)))$ is a convex and closed subset of $H^-1(\Omega)$, and since $H_0^1$ is a Hilbert space and in particular reflexive, the Hahn-Banach theorem states in light of \eqref{maximizer inequality} that
\begin{align}
 Q_r\rho + \div(\frac{\eps(u)}{r}\nabla\psi) \in L^\infty(\Omega,\frac{u(\cdot)}{r^3}\partial^-B(\psi(\cdot)))\subset H^{-1}(\Omega).
\end{align}

We pick a function $p\in L^\infty(\Omega, \partial^- B(\psi)\cdot)))$ so that $Q_r \rho + \div(\frac{\eps(u)}{r}\nabla \psi) = \frac{u}{r^3}p$. 
Note that $p$ is unique almost everywhere that $u\neq 0$ and arbirtrary elsewhere. Then by \ref{B2}
\begin{align}
 \int_\Omega Q_r \rho \psi \,dx = & \int_\Omega \frac{u}{r^3}p\psi - \div(\frac{\eps(u)}{r}\nabla \psi)\psi \,dx\\
 \geq & (1+1) \int_\Omega \frac{\eps(u)}{2r}|\nabla \psi|^2\,dx + (1+c) \int_\Omega \frac{u}{r^3} B(\psi)\,dx\\
 \geq & (1+\min(c,1)) \int_\Omega \frac{\eps(u)}{2r}|\nabla \psi|^2 + \frac{u}{r^3} B(\psi)\,dx.
\end{align}
Subtracting the integral $\int_\Omega \frac{\eps(u)}{2r}|\nabla \psi|^2 + \frac{u}{r^3} B(\psi)\,dx$ from both sides of the inequality yields the result.
\end{proof}

\subsection{The Limit Energy} 

Here we define the self-energy density $\varphi$ appearing in the subcritical $\Gamma$-limits \ref{thm: GammaB}, \ref{thm: GammanoBsub}, and \ref{thm: Bcrit}.

\begin{definition}
 We define the function $\varphi: \R^N \rightarrow \R$ by 
 \begin{equation}
   \varphi(\xi) \coloneqq \inf\left\{ \liminf_{k\to \infty} \frac{E_1(\rho_k;\R^3)}{z_k} \,:\,\rho_k \in \mathcal{A}_1, z_k > 0, \rho_k(\R^3) / z_k \to \xi \right\}. \label{eq: defselfenergy}
 \end{equation}
\end{definition}

\begin{lemma}\label{lemma: subadditivity}
 The function $\varphi$ is positively $1$-homogeneous and subadditive. If $a$ is large enough then $\varphi$ is also coercive.
\end{lemma}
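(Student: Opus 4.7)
My plan is to verify the three properties in turn.

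\textbf{Positive $1$-homogeneity.} For $\lambda>0$, the substitution $z_k\mapsto z_k/\lambda$ maps admissible sequences for $\xi$ to admissible sequences for $\lambda\xi$ while rescaling the energy quotient by $\lambda$, so taking infima in both directions yields $\varphi(\lambda\xi)=\lambda\varphi(\xi)$. At $\xi=0$, taking $\rho_k\equiv 0$, $z_k=1$ together with $u\equiv 1$, $\psi\equiv 0$ gives $\varphi(0)=0$.

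\textbf{Subadditivity.} Assume $\varphi(\xi_1),\varphi(\xi_2)<\infty$ and fix $\eta>0$; pick near-optimizers $(\rho^i,z^i)\in\mathcal{A}_1(\R^3)\times(0,\infty)$ with $|\rho^i(\R^3)/z^i-\xi_i|<\eta$ and $E_1(\rho^i;\R^3)/z^i<\varphi(\xi_i)+\eta$. For each $n\in\N$ let $m_n\coloneqq\lfloor nz^1/z^2\rfloor$ and choose $n+m_n$ points $\{y^1_j,y^2_j\}\subset\R^3$ with pairwise separation at least $d_n\to\infty$, larger than the common support radius plus $2$, so that the shifted copies have disjoint supports and $\rho_n\coloneqq\sum_j T_{y^1_j}\rho^1+\sum_j T_{y^2_j}\rho^2$ lies in $\mathcal{A}_1(\R^3)$. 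With $w_n\coloneqq nz^1$ one has $\rho_n(\R^3)/w_n\to \xi_1+\xi_2+O(\eta)$. As competitor in $E_1(\rho_n;\R^3)=\inf_u E_1(\rho_n,u)$ I take $u_n$ equal to the disjoint union of the translated optima $u^i$ (each a characteristic function of a bounded bubble around its support). Then the local terms $a|\rho_n|$, $\beta\int(1-u_n)$ and $\gamma|Du_n|$ are exactly additive across copies, while the Lennard-Jones contribution differs from the sum of individual terms only by cross-tails of order $O((n+m_n)\int_{|x|\ge d_n}|U_{LJ}|\,dx)=o(n)$ by (A2). For the electrical energy, a perturbation of the Green's function $G_{u_n}$ around each individual $G_{u^i}$ shows that the self-contribution of the $i$-th copy, computed with $u_n$, differs from $E^{el}_1(\rho^i,u^i)$ by an $O(1/d_n)$ error; the cross-cluster interactions decay exponentially in $d_n$ if $B$ satisfies \ref{B1} (via the comparison with a linear Helmholtz problem in Lemma~\ref{psidecay}, together with the bound Lemma~\ref{lemma: dual}), and like the Coulomb kernel $\sim (\text{masses})/d_n$ if $B=0$. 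Summing the $O(n^2)$ pairwise terms thus gives at worst $O(n^2/d_n)$, which is $o(w_n)$ provided $d_n$ grows fast enough (e.g.\ $d_n=n^2$). Dividing by $w_n$, taking $n\to\infty$, and then letting $\eta\to 0$ via a diagonal argument (using the already-proved $1$-homogeneity to absorb the $O(\eta)$-perturbation of the target vector) yields $\varphi(\xi_1+\xi_2)\le\varphi(\xi_1)+\varphi(\xi_2)$.

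\textbf{Coercivity.} Measures in $\mathcal{A}_1$ have nonnegative components, so $\varphi(\xi)=+\infty$ whenever some $\xi^i<0$. For $\xi\in[0,\infty)^N$, plugging $\psi=0$ into the sup over $\psi$ and dropping the nonnegative terms $\beta\int(1-u)+\gamma|Du|$ gives
\begin{align}
 E_1(\rho;\R^3)\ \ge\ a|\rho|(\R^3)+\inf_u\int_{\R^3} U_{1,\rho}\,u\,dx\ \ge\ (a-C)|\rho|(\R^3),
\end{align}
where $C\coloneqq \max_i\|(U^i_{LJ})^-\|_{L^1(\R^3)}<\infty$ by (A2) (the negative part of each $U^i_{LJ}$ is supported in $\R^3\setminus B(0,R)$, where $U^i_{LJ}$ is integrable). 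Dividing by $z_k$ and passing to the infimum in the definition of $\varphi$ gives $\varphi(\xi)\ge(a-C)\sum_i\xi^i\ge (a-C)c|\xi|$ for a dimensional constant $c>0$, which is positive as soon as $a>C$.

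The main obstacle is the electrical interaction estimate in the subadditivity step for the $B=0$ case: the slow Coulombic $1/d$-tail forces a careful geometric spreading of the $O(n)$ cluster copies so that the $O(n^2)$ pairwise interactions still remain $o(n)$ on the linear scale $w_n\sim n$. For $B$ satisfying \ref{B1}, the exponential decay from Lemma~\ref{psidecay} makes this step routine.
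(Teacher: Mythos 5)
Your homogeneity and coercivity arguments coincide with the paper's, and in the case $B=0$ your subadditivity construction (far-separated translated copies, $u_n$ the union of the individual optima, a Green's-function decomposition of the quadratic electric energy, separation $d_n=n^2$ so that the $O(n^2)$ Coulomb cross terms are $o(n)$) is a viable alternative to the paper's two-measure gluing lemma, modulo writing out the maximum-principle comparison of $G_{u_n}$ with $G_{u^i}$ near each cluster. The genuine gap is in the case \ref{B1}--\ref{B2}, which this lemma must also cover (it is invoked in the proof of Theorem \ref{thm: GammaB}). First, Lemma \ref{psidecay} does not give exponential decay: it compares the maximizer with the solution of the linear problem \emph{without} $B$ (a Poisson problem, not a Helmholtz problem), so the only decay it provides is the Coulombic $C/|x|$ bound; the exponential screening is a heuristic of Subsection \ref{sec: heuristics}, not a proved statement. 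Second, and more seriously, for nonlinear $B$ the electric energy is not a quadratic form, so the bookkeeping ``self-contribution of each copy plus pairwise cross-cluster interactions'' on which your $O(n^2/d_n)$ estimate rests has no meaning: $E^{el}_1(\rho_n,u_n)$ is a supremum over one global $\psi$, and you must bound this supremum from above by $\sum_i E^{el}_1(\rho^i,u^i)+o(n)$. You also cannot reduce to the linear case by discarding $-u_nB(\psi)$, since the resulting single-cluster quantities with $B=0$ exceed the true ones $E^{el}_1(\rho^i,u^i)$ by an order-one amount per copy, i.e.\ by $O(n)$ in total; the inequality goes the wrong way exactly where it matters.

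The repair is the paper's route: take the global maximizer $\psi_n$ of the combined problem, use the Coulombic decay of Lemma \ref{psidecay} (and, if needed, the energy bound of Lemma \ref{lemma: dual} coming from \ref{B2}) to select around each cluster an annulus carrying only a small share of $\int \frac{\eps(u_n)}{2}|\nabla\psi_n|^2 + u_nB(\psi_n)\,dx$, and cut $\psi_n$ off there. Each truncated piece is then an admissible competitor for the corresponding single-cluster supremum: the source term $\int Q_1\rho^i\,\psi\,dx$ is unchanged because the cutoff equals $1$ on the support of $Q_1\rho^i$, and the $B$-term does not increase since $B(\eta s)\le B(s)$ for $0\le\eta\le 1$ by convexity and $B(0)=0$. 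This yields the needed upper bound on the combined supremum uniformly for $B=0$ and for \ref{B1}--\ref{B2}, and incidentally also replaces the Green's-function perturbation step in your $B=0$ argument.
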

\begin{proof}
The positive $1$-homogeneity follows from the definition. The only negative term in the energy is
\begin{align}
 \int_\Omega u(x) U_{1,\rho}(x)^-\,dx.
\end{align}
Now $U_{1.\rho}$ depends linearly on $\rho$, and its negative part is integrable, so that for $\rho \in \mathcal{A}$ and $a=0$ we have $E_1(\rho,\R^3) \geq -c|\rho|(\R^3)$. 
Choosing $a>c$, then $E_1(\rho,\R^3) \geq (a-c)|\rho|(\R^3)$, and the coercivity is inherited by $\varphi$.
If $\rho_k(\R^3)/z_k\to \xi$, then $\frac{\rho_k(\R^3)}{z_k/\alpha} \to \alpha \xi$ for $\alpha>0$, so $\varphi(\alpha\xi)\leq \liminf_{k\to \infty} \frac{E_1(\rho_k,\R^3)}{z_k/\alpha}$, so taking the infimum over all such sequences $\varphi(\alpha \xi)\leq \alpha \varphi(\xi)$. Also $\varphi(\xi) \leq \frac{1}{\alpha} \varphi(\alpha\xi)$, so that $\varphi(\alpha\xi)=\alpha\varphi(\xi)$.

To prove subadditivity, we first show that for any $\rho_1$, $\rho_2\in \mathcal{A}_1$ and any $\delta>0$, there is $\rho\in \mathcal{A}_1$ with $\rho(\R^3)=\rho_1(\R^3) + \rho_2(\R^3)$ and $E_1(\rho,\R^3)\leq E(\rho_1,\R^3) + E(\rho_2,\R^3)+\delta$. 
To see this, consider $R>0$ and set $\rho \coloneqq \rho_1 + T_{Re_1}^\sharp \rho_2$, where $T_{Re_1}^\sharp \rho_2\in \mathcal{A}_1$ is $\rho_2$ translated by $Re_1$. 
We see that $\dist(\supp \rho_1, \supp T_{Re_1}^\sharp \rho_2) \geq R-R_0$ for some $R_0\in \R$ since both $\rho_1$ and $\rho_2$ have compact support, so that, in particular, $\rho \in \mathcal{A}_1$, and clearly $\rho(\R^3)=\rho_1(\R^3) + \rho_2(\R^3)$. 
Now consider the minimizing $u_1,u_2:\R^3\to \{0,1\}$ for $\rho_1$ and $\rho_2$ respectively, and set $u\coloneqq\min(u_1,u_2(\cdot - Re_1))$. 
Then 
\[
|Du|\leq |Du_1| + |Du_2| \text{ and } \int_{\R^3} (1-u)\,dx \leq \int_{\R^3} (1-u_1)\,dx + \int_{\R^3} (1-u_2)\,dx.  
\]
Note that $u_1$ and $u_2$ are equal to $1$ outside of a large ball $B(0,R_0)$. Then
\begin{align}
\int_{\R^3} u U_{1,\rho}\,dx = & \int_{\R^3} u (U_{1,\rho_1} + U_{1,\rho_2})\,dx\\
\geq &\int_{B(0,R/2)} \left(u_1 U_{1,\rho_1} + u_2 U_{1,\rho_2}\right)\,dx - \int_{\R^3\setminus B(0,R/2)} \left(U_{1,\rho_1}^- + U_{1,\rho_2}^-\right)\,dx\\
\geq &\int_{\R^3} \left(u_1 U_{1,\rho_1} + u_2 U_{1,\rho_2}\right)\,dx -C(|\rho_1|(\R^3)+\rho^3|(\R^3))R^{-3},
\end{align}
due to the decay of $U_{1,\rho}$.

This leaves us to estimate the electrical energy. 
Let $\psi\in H_0^1(\R^3)$ be the maximizer. 
By Lemma \ref{psidecay} and the existence of a Green's function that decays as $\frac{1}{|x|}$ (see \cite{Littman1963}) we have
\begin{align}
 |\psi(x)| \leq \frac{C}{|x|} + \frac{C}{|x-Re_1|}
\end{align}
as long as $\min(|x|,|x-Re_1|) \geq \frac{R}{4}$, where $C$ depends on $\rho_1$ and $\rho_2$.

We now choose a large number $0\ll M\ll\frac{R}{4}$, and pick one of the $\lfloor \frac{R}{4M}\rfloor$ annuli $A_i=B(0,\frac{R}{4}+iM)\setminus B(0,\frac{R}{4}+(i-1)M)$ for $i=1,\ldots,\lfloor \frac{R}{4M}\rfloor$, for which
\begin{align}
 \int_{A_i} |\nabla \psi|^2\,dx \leq \frac{1}{\lfloor \frac{R}{4M}\rfloor}\int_{B(0,\frac{R}{2})}|\nabla \psi|^2\,dx,
\end{align}
and define $\psi_1 \coloneqq \psi \eta_i$, where $\eta_i\in C^1_c(B(0,\frac{R}{4}+iM))$ is a cut-off function which is $1$ in $B(0,\frac{R}{4}+(i-1)M)$, with $|\nabla \eta_i|\leq \frac{2}{M}$, $\eta_i\leq 1$. We see that
\begin{align}
 \int_{A_i} |\nabla \psi_1|^2\,dx \leq &2\int_{A_i} \left(|\nabla \psi|^2 + \frac{4}{M^2} \frac{C}{|x|}\right)\,dx\\
 \leq & \frac{4M}{R}\int_{B(0,\frac{R}{2})}|\nabla \psi|^2\,dx + \frac{C}{M}.
\end{align}
We now do the same with an annulus around $Re_1$ and obtain $\psi_2$ which has similar estimates. If we pick $M$ large enough and $R$ even larger, we get that
\begin{align}
 &\int_{\R^3} \left( (Q_1\rho_1) \psi_1 - \frac{\eps(u)}{2r} |\nabla \psi_1|^2 - \frac{u}{r^3}B(\psi_1)\right)\,dx + \int_{\R^3} \left(Q_1(T_{Re_1}\rho_2) \psi_2 - \frac{\eps(u)}{2r} |\nabla \psi_2|^2 - \frac{u}{r^3}B(\psi_2)\right)\,dx\\
 \geq  &\int_{\R^3} Q_1\rho \psi - \frac{\eps(u)}{2r} |\nabla \psi|^2 - \frac{u}{r^3}B(\psi)\,dx - \delta.
\end{align}

It follows that $E^{el}_1(\rho)\leq E^{el}_1(\rho_1)+E^{el}_2(\rho_2)+\delta$.

To show that the limit energy is subadditive, fix $\xi_1,\xi_2\in\R^N$ and assume that $\rho_1^k/z_1^k \to \xi_1$ and $\rho_2^k/z_2^k \to \xi_2$, with $\liminf_{k\to \infty} \frac{E_1(\rho_1^k,\R^3)}{z_1^k} = \varphi(\xi_1)$ and $\liminf_{k\to \infty} \frac{E_1(\rho_2^k,\R^3)}{z_2^k} = \varphi(\xi_2)$. 
Let $z_k\coloneqq k(z^k_1 + z^k_2)$, $n^k_1 \coloneqq \lfloor\frac{z_k}{z^k_1}\rfloor$, $n^k_2 \coloneqq \lfloor\frac{z^k}{z^k_2}\rfloor$, where $\lfloor z \rfloor$ denotes the largest integer less than $z$. 
Note that all three sequences converge to infinity.

By the subadditivity above, there is $\rho_k\in \mathcal{A}_1$ such that $\rho_k(\R^3) = n^k_1 \rho_1^k(\R^3) + n^k_2 \rho_2^k(\R^3)$ and $E_1(\rho,\R^3) \leq n^k_1 E_1(\rho_1^k,\R^3) + n^k_2 E_1(\rho_2^k,\R^3)+1$. Note that
\begin{align}
 \lim_{k\to \infty} \frac{\rho_k(\R^3)}{z_k} = \lim_{k\to\infty} \frac{\rho_1^k}{z_1^k} + \lim_{k\to\infty} \frac{\rho_2^k}{z_2^k} = \xi_1 + \xi_2,
\end{align}
and
\begin{align}
 \lim_{k\to\infty} \frac{E_1(\rho_k,\R^3)}{z_k} = \lim_{k\to \infty} \frac{E_1(\rho_1^k,\R^3)}{z^k_1} + \lim_{k\to \infty} \frac{E_1(\rho_2^k,\R^3)}{z^k_2} = \varphi(\xi_1) + \varphi(\xi_2).
\end{align}

It follows that $\varphi(\xi_1+\xi_2) \leq \varphi(\xi_1) + \varphi(\xi_2)$.
\end{proof}

\begin{remark}
 Let $\bar{\varphi}: \R^N \rightarrow \R$ be defined by
 $\bar{\varphi}(\xi) \coloneqq \inf_{\rho \in \mathcal{A}_1(\R^3): \rho(\R^3) = \xi} E_1(\rho;\R^3)$.
 Then $\varphi$ is the positively $1$-homogeneous, subadditive envelope of $\bar{\varphi}$.
\end{remark}

In the case where we assume well-separateness of different solutes i.e., we additionally assume in the definition of $\mathcal{A}_r$ that admissible measures are single Diracs which have a distance $2 \delta_r \gg r$,
we need the following result.

\begin{lemma}\label{lemma: optimalprofiles}
Let $i \in \{1,\dots,N\}$. 
For the $i$th standard unit vector $e_i \in \R^N$ and a Dirac mass in $0$, $\delta_0$, let $\rho = \delta_0 e_i$.
 Then
\[
 \lim_{R\to\infty}E_r(\rho;B_R(0)) = \lim_{R\to\infty}\min_u \max_{\psi \in H_0^1(B_R(0))} E_r(\rho,u,\psi_R;B_R(0)) = \min_u \max_{\psi \in H_0^1(\R^3)} E_r(\rho,u,\psi;\R^3) = E_r(\rho;\R^3). 
 \]
 \end{lemma}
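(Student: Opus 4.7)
The plan is to prove $\limsup_{R\to\infty}E_r(\rho;B_R)\leq E_r(\rho;\R^3)$ and $\liminf_{R\to\infty}E_r(\rho;B_R)\geq E_r(\rho;\R^3)$ separately, using the minimax structure of Lemma \ref{minimax lemma} and the decay of the optimal potential from Lemma \ref{psidecay}. The basic bookkeeping tool is the additive decomposition $E_r(\rho,u,\psi;\R^3) = E_r(\rho,u,\psi;B_R) + E_r(\rho,u,\psi;B_R^c)$, valid for a.e.\ $R$ (those with $|Du|(\partial B_R)=0$).

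\textit{Upper bound.} Let $(u^*,\psi^*)$ be a saddle point on $\R^3$. We test $E_r(\rho;B_R)$ with the competitor $u^*|_{B_R}$. For any $\psi\in H_0^1(B_R)$, its zero-extension $\tilde\psi$ lies in $H_0^1(\R^3)$, and, since $\supp Q_r\rho \subseteq B_r\subseteq B_R$ and $\tilde\psi\equiv 0$ on $B_R^c$, a direct computation yields
\[
E_r(\rho,u^*|_{B_R},\psi;B_R) = E_r(\rho,u^*,\tilde\psi;\R^3) - \Big[\beta r^{-3}\int_{B_R^c}(1-u^*)\,dx + \gamma r^{-2}|Du^*|(B_R^c) + r^{-3}\int_{B_R^c}U_{r,\rho}u^*\,dx\Big].
\]
Each bracket term is $o(1)$ as $R\to\infty$: the first two by the finiteness of $\int(1-u^*)$ and $|Du^*|(\R^3)$ (consequences of $E_r(\rho;\R^3)<\infty$), and the third by the integrability of $|U_{LJ}|$ off any fixed ball (assumption (A2)). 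Taking the supremum over $\psi$ and estimating it by the supremum over $H_0^1(\R^3)$ produces $E_r(\rho;B_R)\leq E_r(\rho;\R^3)+o(1)$.

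\textit{Lower bound.} Using the max-min reformulation $E_r(\rho;B_R)=\max_{\psi\in H_0^1(B_R)}G_R(\psi)$ with $G_R(\psi)\coloneqq\min_u E_r(\rho,u,\psi;B_R)$, set $\psi_R^\sharp\coloneqq\phi_R\psi^*$, where $\phi_R\in C_c^\infty(B_R)$ satisfies $\phi_R\equiv 1$ on $B_{R/2}$ and $|\nabla\phi_R|\leq 2/R$. The pointwise bound $|\psi^*(x)|\leq C/|x|$, inherited from the Newtonian Green's function via Lemma \ref{psidecay}, implies $\tilde\psi_R^\sharp\to\psi^*$ in $H_0^1(\R^3)$. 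The crux is the estimate $G_R(\psi_R^\sharp)\geq G_\infty(\tilde\psi_R^\sharp)-o(1)$, where $G_\infty$ is the analogous inner infimum on $\R^3$. Let $u_R^\sharp$ minimize $u\mapsto E_r(\rho,u,\psi_R^\sharp;B_R)$. The upper bound already established yields a uniform energy bound, hence $\int_{B_R}(1-u_R^\sharp)\,dx\leq C$. A Fubini/coarea argument then produces a radius $s_R\in[R/2,R]$ with $\mathcal{H}^2(\{u_R^\sharp<1\}\cap\partial B_{s_R})\leq C'/R$. Setting $\hat u_R\coloneqq u_R^\sharp$ on $B_{s_R}$ and $\hat u_R\coloneqq 1$ on $B_R\setminus B_{s_R}$ changes the $B_R$-energy by $o(1)$: the surface term grows by at most $O(r^{-2}\gamma/R)$, while the volume, Lennard-Jones, and electrical contributions can only decrease, using $U_{r,\rho}\leq 0$ off a fixed ball (valid for $R$ large), $\eps(0)\leq\eps(1)$, and $B\geq 0$. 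The extension $\tilde u_R\coloneqq\hat u_R\1_{B_R}+\1_{B_R^c}$ has no jump on $\partial B_R$, and since $\tilde\psi_R^\sharp\equiv 0$ off $B_R$, the $B_R^c$-contribution reduces to $r^{-3}\int_{B_R^c}U_{r,\rho}\,dx=o(1)$. Combining,
\[
G_R(\psi_R^\sharp) \geq E_r(\rho,\tilde u_R,\tilde\psi_R^\sharp;\R^3) - o(1) \geq G_\infty(\tilde\psi_R^\sharp) - o(1),
\]
and the continuity of the concave functional $G_\infty$ at its maximizer $\psi^*$ gives $G_\infty(\tilde\psi_R^\sharp)\to G_\infty(\psi^*)=E_r(\rho;\R^3)$, closing the argument.

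\textit{Main obstacle.} The coarea modification of $u_R^\sharp$ is the delicate point: naively extending $u_R^\sharp$ by $1$ outside $B_R$ could produce a surface contribution of order $r^{-2}\gamma R^2$, dominating the limit. Slicing at a favorable radius $s_R$ avoids this, but it requires the $L^1$-bound on $1-u_R^\sharp$ provided by the upper bound, so the two inequalities must be proved in that order.
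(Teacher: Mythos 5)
Your proof is correct and takes essentially the same route as the paper's (which is only a few lines): upper bound by restricting the whole-space minimizer $u^*$ and zero-extending $\psi$, lower bound by cutting off the whole-space maximizer $\psi^*$ using its $1/|x|$ decay from Lemma~\ref{psidecay} and \cite{Littman1963}. The difference is that you make explicit, and correctly resolve, a step the paper elides. The paper's lower-bound argument only discusses cutting off $\psi^*$ ``just as in the previous proof''; it notes that the whole-space optimal $u$ is $1$ outside a fixed ball, but does not address the point that the minimizer $u_R^\sharp$ of $u\mapsto E_r(\rho,u,\psi_R^\sharp;B_R)$ need not equal $1$ near $\partial B_R$ when $\gamma>0$, so that naively extending it by $1$ could create an $O(r^{-2}\gamma R^2)$ surface term. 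Your coarea slicing at a good radius $s_R\in[R/2,R]$, combined with the one-sided monotonicity of each term in $u$ on the annulus (using $U_{r,\rho}\leq 0$ there, $\eps(1)\geq\eps(0)$, $B\geq 0$), closes this gap cleanly, and you correctly observe that the $L^1$-bound on $1-u_R^\sharp$ required for the slicing comes from the upper bound, fixing the order of the argument. Your version of the upper bound is also sharper than what the paper writes: the paper's displayed inequality $E_r(\rho;B_R)\leq E_r(\rho;\R^3)+\int_{\R^3}U_{LJ}^i$ involves an integral that need not even be finite under (A2) and in any case does not vanish as $R\to\infty$, whereas your error term is the Lennard-Jones tail $r^{-3}\int_{B_R^c}U_{r,\rho}$ plus the nonnegative $\beta$- and $\gamma$-contributions outside $B_R$, which do go to zero. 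The one place where you are slightly informal is the appeal to ``continuity of the concave functional $G_\infty$''; in the superquadratic case this is not automatic for arbitrary $H^1$-convergent sequences, but it does hold along your specific sequence $\tilde\psi_R^\sharp=\phi_R\psi^*$ because $0\leq B(\phi_R\psi^*)\leq B(\psi^*)\in L^1$ by convexity and $B(0)=0$, so reverse Fatou applies; this is worth saying explicitly.
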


\begin{proof}
It always holds that $E_r(\rho;B_R(0)) \leq E_r(\rho;\R^3) + \int_{\R^3} U_{LJ}^i(x) \,dx$.\\ 
For $E_r(\rho;\R^3)$ the optimal $u$ is $1$ outside a certain ball around zero.  
The corresponding optimizer decays as $\frac{1}{|x|}$.
Just as in the previous proof, we may cut-off this function on an annulus of thickness $L>0$ on which the gradient energy is small to produce a competitor 
which has only slightly more energy but is compactly supported.
Hence, for $L \to \infty$ the corresponding energy converges to $E_r(\rho;\R^3)$. 
\end{proof}

\section{The Subcritical Regime for superquadratic $B$}\label{sec: B}

In this section we prove Theorem \ref{thm: GammaB} assuming that $B$ satisfies \ref{B1} and \ref{B2}.

\begin{theorem*}
Assume that $\alpha(r) \to \infty$ and $\alpha(r) r^3 \to 0$. 
Then the functionals $\{E_r/\alpha(r)\}_{r>0}$ $\Gamma$-converge, in the topology $\rho_r/\alpha(r) \weakstar \rho$, to the limit energy 
$E_0: \mathcal{M}(\Omega;\R^3)\rightarrow \R\cup\{\infty\}$ defined by

\begin{align}
 E_0(\rho) = \begin{cases}
             \int_{\Omega} \varphi\left( \frac{d\rho}{d|\rho|} \right)\,d|\rho| &\text{if the vector-valued measure } \rho \text{ is a nonnegative measure in each component,} \\
             +\infty &\text{otherwise},
            \end{cases}
\end{align}
where $\varphi$ is defined in \eqref{eq: defselfenergy}.
\end{theorem*}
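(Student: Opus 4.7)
The argument naturally splits into three parts: compactness, the $\liminf$ inequality, and the construction of a recovery sequence. \emph{Compactness} is immediate: Lemma \ref{lemma: subadditivity} ensures $a$ is chosen so that $E_r(\rho)\geq (a-c)|\rho|(\Omega)$, hence a uniform bound on $E_r(\rho_r)/\alpha(r)$ gives a uniform bound on $|\rho_r|(\Omega)/\alpha(r)$. Banach--Alaoglu then produces a vaguely convergent subsequence $\rho_r/\alpha(r)\weakstar \rho$ in $\mathcal{M}(\Omega;\R^N)$, and each component of the limit is non-negative since each component of $\rho_r$ is a counting measure.

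\textbf{Liminf inequality.} This is the heart of the argument and the place where \ref{B1} and \ref{B2} play the decisive role. First, Lemma \ref{lemma: dual} bounds
\[
\int_\Omega \left(\frac{\eps(u_r)}{2r}|\nabla \psi_r|^2 + \frac{u_r}{r^3}B(\psi_r)\right)\,dx \leq \frac{1}{\min(c,1)}E^{el}_r(\rho_r,u_r),
\]
so combined with $B(s)\geq c s^2 - 1/c$ the maximizer $\psi_r$ satisfies an $H^1$-energy estimate with a large $r^{-3}$-weighted $L^2$-mass term on the solvent set. This is exactly the structure which produces exponential decay, on scale $r$, of the Green's function of the Poisson--Boltzmann operator --- the screening effect. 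I would then select an intermediate scale $\delta_r$ with $r\ll\delta_r$ and $\alpha(r)\delta_r^3\to 0$, partition $\Omega$ into cubes $\{Q_j\}$ of side $\delta_r$, and invoke a covering/selection argument (Lemma \ref{lemma: cubes}) to discard a vanishing fraction of mass and reduce the nonlocal cross-cube electrical interaction to $o(\alpha(r))$. After rescaling $x\mapsto x/r$ and truncating $\psi_r$ near $\partial Q_j$, each restricted configuration becomes an admissible unit-scale problem on a large cube, so, using Lemma \ref{lemma: optimalprofiles} to pass from finite cubes to $\R^3$, the definition of $\varphi$ in \eqref{eq: defselfenergy} yields
\[
E_r(\rho_r;Q_j) \geq \varphi\!\left(\rho_r(Q_j)\right) - o(1)|\rho_r|(Q_j).
\]
Summing over $j$, dividing by $\alpha(r)$ and using positive $1$-homogeneity, this is a Riemann sum of the form $\int_\Omega \varphi\!\left(d\tilde{\rho}_r/d|\tilde{\rho}_r|\right)d|\tilde{\rho}_r|$ for the discretised measure $\tilde{\rho}_r\weakstar \rho$. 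Reshetnyak lower semicontinuity, applicable because subadditivity together with positive $1$-homogeneity forces convexity of $\varphi$ on the nonnegative cone (Lemma \ref{lemma: subadditivity}), then gives
\[
\liminf_{r\to 0}\frac{E_r(\rho_r)}{\alpha(r)} \geq \int_\Omega \varphi\!\left(\frac{d\rho}{d|\rho|}\right)d|\rho| = E_0(\rho).
\]

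\textbf{Recovery sequence.} By standard $\Gamma$-limsup density and the subadditivity of $\varphi$, it suffices to construct a recovery sequence for $\rho=\sum_{\ell=1}^L \xi_\ell \delta_{x_\ell}$ with $x_\ell\in\Omega$ and $\xi_\ell\in[0,\infty)^N\setminus\{0\}$. For each $\ell$, the definition of $\varphi$ supplies $\rho_k^\ell\in\mathcal{A}_1(\R^3)$ with $\rho_k^\ell(\R^3)/z_k^\ell\to\xi_\ell/|\xi_\ell|$ and $E_1(\rho_k^\ell;\R^3)/z_k^\ell\to\varphi(\xi_\ell)/|\xi_\ell|$. I would pick $k=k(r)\to\infty$ slowly, place $n_\ell(r)=\lfloor\alpha(r)|\xi_\ell|/z_{k(r)}^\ell\rfloor$ translated copies of the $r$-rescaled $\rho_{k(r)}^\ell$ inside a $\delta_r$-neighbourhood of $x_\ell$ at mutual distance much larger than the rescaled supports (the hypothesis $\alpha(r)r^3\to 0$ guarantees enough room), and take $u$ to be the pointwise minimum of the optimal $u$'s of the clusters. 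The exponential screening from \ref{B1}--\ref{B2}, employed exactly as in the subadditivity argument of Lemma \ref{lemma: subadditivity}, ensures that pairwise cross-interactions vanish, so that $E_r(\rho_r)/\alpha(r)\to\sum_{\ell}\varphi(\xi_\ell)=E_0(\rho)$.

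\textbf{Main obstacle.} The decisive difficulty is the lower bound: organising the decomposition of $\Omega$ into clusters on the intermediate scale $\delta_r$ so that simultaneously the local self-energy on each cluster is bounded below by $\varphi$ evaluated on its total mass, and the nonlocal electrical cross-interactions between clusters are asymptotically negligible. The quantitative screening produced by \ref{B1}--\ref{B2} through Lemma \ref{lemma: dual} and the resulting exponential decay of $\psi_r$ is what makes such a decomposition possible, and the combinatorial selection of ``good'' cubes via Lemma \ref{lemma: cubes} is the central technical ingredient of that step.
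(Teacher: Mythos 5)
Your overall architecture (compactness from coercivity; limsup via energy-density of Dirac sums plus a lattice of rescaled near-optimal cell configurations; liminf via a mesoscopic cube decomposition, localization of the potential, the definition of $\varphi$, and Reshetnyak/lower semicontinuity) matches the paper's. However, the mechanism you propose for the decisive step --- controlling the nonlocal cross-interaction --- has a genuine gap. First, Lemma \ref{lemma: cubes} is the tool for the \emph{subcritical $B=0$} regime: its interaction bound is of order $M^2/\delta_r$, i.e.\ the resulting error in the electrical energy is of order $r\,\alpha(r)^2/\delta_r$, which is $o(\alpha(r))$ only if $r\alpha(r)/\delta_r\to 0$. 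Under the hypothesis of this theorem only $\alpha(r)r^3\to 0$ is assumed, so $\alpha(r)r$ may diverge and no admissible choice of $\delta_r$ makes this error negligible; the pairwise $\sum 1/|x_i-x_j|$ strategy simply does not transfer to this regime (the paper's conclusion even remarks on the limits of that quantity). Second, you replace the missing estimate by an asserted exponential decay of the Green's function of the Poisson--Boltzmann operator. The paper never proves, nor uses, such a decay: the operator's coefficients depend on $u$, the screening term $u B(\psi)$ vanishes inside the solute bubbles, and $B$ is only assumed superquadratic, so a pointwise exponential bound is far from automatic. The only pointwise information the paper extracts is the comparison with the \emph{linear} problem (Lemma \ref{psidecay}), giving $1/|x|$ decay, which is not enough to sum $\sim\alpha(r)^2$ pairwise interactions in this regime.

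What the paper actually does in the lower bound is purely variational and avoids any decay estimate: it chooses the cube offset $z_0$ so that the mass in the thin plates $A_{z_0,r}$ is $O(r^{1/3}\alpha(r))$, takes the maximizers $\psi_{z,r}$ of the cube-wise whole-space problems, cuts them off at negligible cost using the dual bound of Lemma \ref{lemma: dual} (this is where \ref{B2} enters) together with a pigeonhole choice of cutting shell, glues them into a global competitor $\overline{\psi_r}$, and then controls the interaction of the \emph{removed} charges with $\overline{\psi_r}$, i.e.\ the term $\int_\Omega Q_r(\overline{\rho_r}-\rho_r)\,\overline{\psi_r}\,dx$, by splitting $\overline{\psi_r}$ into a bounded part and a tail, using the $L^\infty$ bound $\|Q_r(\overline{\rho_r}-\rho_r)\|_{L^\infty}\leq CM/r^3$ built into the definition \eqref{def: Ar} of $\mathcal{A}_r$ and the superquadratic growth \ref{B1} to bound the tail in $L^2$. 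Your sketch never addresses this interaction of the discarded mass with the retained potential. The same comment applies to your recovery sequence: the paper does not argue that ``pairwise cross-interactions vanish by screening''; it cuts the single global maximizer into compactly supported pieces around each cluster, again using \ref{B2} to bound the total Dirichlet-plus-$B$ energy by the energy value and pigeonholing an annulus with a $\delta$-fraction of it. So the statement is correct and your plan is repairable, but as written the key step would fail in the regime $\alpha(r)r\not\to 0$, and the exponential-screening claim would itself require a proof that the paper deliberately circumvents.
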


\begin{proof}[Proof of the $\Gamma$-$\limsup$]
We first show that finite sums of vector-valued Dirac masses are energy-dense.
Let $\rho \in \mathcal{M}(\Omega,\R^N)$ be any vector-valued Radon measure. For $\delta>0$ define
\begin{align}
 \rho_\delta \coloneqq \sum_{z\in \delta \Z^3 \cap B_{\delta^{-1}}(0)} \rho(Q_{z,\delta}) \delta_z,
\end{align}
where $Q_{z,\delta} \coloneqq z + [-\delta/2,\delta/2)^3$ is the half-open Voronoi cell of $z$. 
For a test function $\phi\in C_c(\Omega,\R^N)$ with modulus of continuity $\omega:[0,\infty)\to [0,\infty)$ we have, for $\delta$ small enough,
\begin{align}
 \left|\langle \phi, \rho_\delta - \rho \rangle\right| \leq &\sum_{z\in \delta \Z^3\cap B_{\delta^{-1}}(0)} \left| \int_{Q_{z,\delta}} \phi \, \cdot d\rho - \phi(z)\cdot\rho(Q_{z,\delta})\right|\\
 \leq & \sum_{z\in \delta \Z^3}  \int_{Q_{z,\delta}} |\phi - \phi(z)| \, d|\rho|\\
 \leq & \omega(\sqrt{3}\delta) |\rho|(\supp \phi).
\end{align}
Since $\lim_{r\to 0} \omega(r) = 0$, this shows that $\rho_\delta \weakstar \rho$.

Now to show that
\begin{align}
 \limsup_{\delta \to 0} E_0(\rho_\delta) \leq E_0(\rho),
\end{align}
by the convexity and positive $1$-homogeneity of $\varphi$, we can use Jensen's inequality for each Dirac mass to obtain
\begin{align}
E_0(\rho) = \sum_{z\in \delta \Z^3} \int_{Q_{z,\delta}} \varphi\left(\frac{d\rho}{d|\rho|}\right)\,d|\rho|\geq \sum_{z\in \delta \Z^3\cap B_{\delta^{-1}}(0)}  \varphi\left( \rho(Q_{z,\delta})\right) =  E_0(\rho_\delta).
\end{align}
Together with the lower-semicontinuity of $E_0$, this shows the energy-density of finite sums of Dirac masses.

To find a competitor to $E_r$ close to $\sum_{i=1}^M p_i \delta(x_i)$, we fix $\delta>0$ and find an almost optimal competitor to the cell problem for $p_i$, i.e., $M$ admissible measures $\rho_1,\ldots,\rho_M\in \mathcal{A}_1$ and $M$ positive numbers $z_1,\ldots,z_M$ such that
\begin{align}
 \sum_{i=1}^M \left|\frac{\rho_i(\Omega)}{z_i} - p_i\right| \leq \delta \text{ and }  \sum_{i=1}^M \left(\frac{E_1(\rho_i)}{z_i} -\varphi(p_i)\right) \leq  \delta.
\end{align}

Find $R>0$ large enough so that all $\rho_i$ are supported inside of $B_R(0)$, all optimal $u_i$ are identically $1$ outside of $B_R(0)$, and
\begin{align}\label{eq: estLJ}
\sum_{i=1}^M \int_{\Omega \setminus B_R(0)} U_{1,\rho_i}\,dx \geq - \delta. 
\end{align}
Note that the optimal $u_i$ is necessarily $1$ in the complement of any convex set $A\subseteq \Omega$ as long as the Lennard-Jones potential $U_{1,\rho_i}$ is negative outside of $A$.

In  order to obtain a competitor $\rho_r\in \mathcal{A}_r$ to $E_r$ such that $\rho_r/\alpha(r)$ is close in the vague topology to  $\sum_{i=1}^M p_i \delta(x_i)$, we replace each $p_i \delta(x_i)$ with $N_i(r)\in \N$ translated copies of $\rho_i(\cdot/r)$ arranged on a lattice.
Choose
\begin{align}\label{chargenumber}
 N_i(r) \coloneqq \lfloor \frac{\alpha(r)}{z_i} \rfloor
\end{align}
and a lattice spacing $s(r)>0$ such that
\begin{align}\label{separation}
 \frac{s(r)}{r} \to \infty,
\end{align}
but
\begin{align}\label{gridsize}
 \max_{i=1,\ldots,M} N_i(r)^{1/3} s(r) \to 0.
\end{align}
Setting
\begin{align}
s(r) \coloneqq \sqrt{\frac{r}{\alpha(r)^{1/3}}}.
\end{align}
satisfies \eqref{separation} and \eqref{gridsize}.
Define
\begin{align}
 \rho_r \coloneqq \sum_{i=1}^M \sum_{z\in Z_{N_i}} \rho_i\left(\frac{\cdot - s(r)z}{r} -x_i\right),
\end{align}
where $Z_{N} \subseteq \Z^3$ is a set of size $N$ with $\max_{z\in \Z_{N}} |z| \leq N^{1/3}$.

Note that for every $i$ the support of
\begin{align}
 \sum_{z\in Z_{N_i(r)}} \rho_i\left(\frac{\cdot - s(r)z}{r} -x_i\right)
\end{align}
is contained in $B(x_i,s(r) N_i(r)^{1/3})$.
By \eqref{gridsize} these balls are pairwise disjoint for $r$ small enough and their radii converge to $0$ as $r\to0$ . 
Also, by the definition \eqref{chargenumber} of $N_i(r)$, we have
\begin{align}
 \frac{\rho_r(B(x_i,s(r) N_i(r)^{1/3}))}{\alpha(r)} = \frac{N_i(r) \rho_i(\Omega)}{\alpha(r)} \to \frac{\rho_i(\Omega)}{z_i},
\end{align}
hence
\begin{align}
 \frac{\rho_r}{\alpha(r)} \weakstar \sum_{i=1}^M \frac{\rho_i(\Omega)}{z_i} \delta(x_i),
\end{align}
which is close to the target distribution.

By \eqref{separation}, the lattice size is much larger than $r$. This allows us to define a global $u_r$ as
\begin{align}
u_r(x) \coloneqq \begin{cases}
u_i\left(\frac{x-s(r)z-x_i}{r}\right) &\text{in }B(x_i+s(r)z,rR) \text{ for } z \in Z_{N_i{r}},\\
1 &\text{elsewhere}.
\end{cases}
\end{align}
Now we want to estimate the energy
\begin{align}\label{eq: localandnonlocal}
 E_r(\rho_r,u_r) = \sup_\psi \bigg[ &\frac{1}{r^3}\int_{\Omega} u_r U_{r,\rho_r}\,dx + \frac{1}{r^2}\gamma |Du_r| + \frac{1}{r^3}\beta \int_{\Omega} (1-u_r)\,dx + a |\rho_r|  \\ 
 &+\int_{\Omega} \left((Q_r \rho_r) \psi - \frac{\eps(u_r)}{2r}|\nabla \psi|^2 -\frac{1}{r^3} B(\psi)u_r \right)\,dx \bigg].
\end{align}
The local terms are nonpositive outside of the balls $B(x_i+s(r)z,rR)$, where $U_{r,\rho_r}<0$. Inside the balls, we have
\begin{align}\label{local energy}
&\frac{1}{\alpha(r)} \sum_{i=1}^M \sum_{z\in Z_{N_i(r)}} \bigg[ \frac{1}{r^3}\int_{B(x_i+s(r)z,rR)} u_r U_{r,\rho_r}\,dx + \frac{1}{r^2}\gamma |Du_r|(B(x_i+s(r)z,rR)) \\
& \hspace{2.5cm} + \frac{1}{r^3}\beta \int_{B(x_i+s(r)z,rR)} (1-u_r)\,dx + a |\rho_r|(B(x_i+s(r)z,rR)) \bigg] \\
= &\frac{1}{\alpha(r)}\sum_{i=1}^M N_i(r) \int_{B_R(0)} u_i U_{1,\rho_i}\,dx + \gamma |Du_i|(B_R(0)) + \beta \int_{B_R(0)} (1-u_i)\,dx + a |\rho_i|(B_R(0)) + \delta\\
 \leq &\sum_{i=1}^M \frac{N_i(r)}{\alpha(r)}\left( \int_{\Omega} u_i U_{1,\rho_i}\,dx + \gamma |Du_i|(\Omega) + \beta \int_{\Omega} (1-u_i)\,dx + a |\rho_i|(\Omega) + \delta\right),
\end{align}
where the inequality stems from the fact that the effect of all solutes outside of $B(x_i+s(r)z,rR)$ on $U_{r,\rho_r}$ is smaller than $\delta$ (see \eqref{eq: estLJ}), and we used the change of variables formula.
In order to show the $\Gamma$-$\limsup$ inequality, it remains to control the nonlocal term in \eqref{eq: localandnonlocal}, namely
\begin{align}
 \sup_{\psi\in H^1_0(\Omega)} &\int_{\Omega} \left((Q_r \rho_r) \psi - \frac{\eps(u_r)}{2r}|\nabla \psi|^2 -\frac{1}{r^3} B(\psi)u_r\right)\,dx\\
 = &\int_{\Omega} \left( (Q_r \rho_r) \psi_r - \frac{\eps(u_r)}{2r}|\nabla \psi_r|^2 -\frac{1}{r^3} B(\psi_r)u_r\right)\,dx.
\end{align}
Here $\psi_r\in H^1_0(\Omega)$ is the unique maximizer, which, due to the convexity property \ref{B2} of $B$, satisfies
\begin{align}\label{H^1 estimate}
 & \int_{\Omega} \left(\frac{\eps(u_r)}{2r}|\nabla \psi_r|^2 + \frac{1}{r^3} B(\psi_r)u_r \right)\,dx\\
 \leq &c \int_{\Omega} \left((Q_r \rho_r) \psi_r - \frac{\eps(u_r)}{2r}|\nabla \psi_r|^2 - \frac{1}{r^3} B(\psi_r)u_r \right)\,dx.
\end{align}

We decompose $\psi_r$ into multiple competitors $\psi_{r,i,z}$, one for each pair $(i,z)$, with $i=1,\ldots,M$, $z\in Z_{N_i(r)}$. To do this, we cut $\psi_r$ off in one of the annuli $B(x_i+s(r)z,r(R+j+1))\setminus B(x_i+s(r)z,r(R+j))$ for $j= 1\ldots, \lceil \delta^{-1} \rceil$, defining
\begin{align}
 \psi_{r,i,z}(x) \coloneqq \psi_r(x) \eta_j\left(\frac{x-x_i - s(r)z}{r}\right),
\end{align}
where $\eta_j \in C_c^\infty(\R^3)$ is a cut-off function with $\eta_j = 1$ in $B(R+j)$, $\eta_j=0$ outside of $B(R+j+1)$, and $\|\nabla \eta_j\|_{L^\infty} \leq 2$.

Since the annuli $B(x_i+s(r)z,r(R+j+1))\setminus B(x_i+s(r)z,r(R+j))$ are pairwise disjoint for different $i,z,j$ for $r$ small enough, we choose one $j\in \{1,\ldots,\lceil \delta^{-1} \rceil\}$ such that
\begin{align}
 \sum_{i=1}^M \sum_{z\in N_i(r)} &\int_{B(x_i+s(r)z,r(R+j+1))\setminus B(x_i+s(r)z,r(R+j))} \left( \frac{\eps(u_r)}{2r}|\nabla \psi_r|^2 +\frac{1}{r^3} B(\psi_r)u_r \right)\,dx\\
 \leq &\delta \int_{\Omega} \left( \frac{\eps(u_r)}{2r}|\nabla \psi_r|^2 +\frac{1}{r^3} B(\psi_r)u_r \right)\,dx\\
 \leq &c\delta \int_{\Omega}  \left( (Q_r\rho_r) \psi_r - \frac{\eps(u_r)}{2r}|\nabla \psi_r|^2 -\frac{1}{r^3} B(\psi_r)u_r \right)\,dx.
\end{align}
We estimate the Dirichlet energies of the cut-off versions $\psi_{r,i,z}$ using \ref{B1} and the product rule, to obtain 
\begin{align}
  \sum_{i=1}^M \sum_{z\in N_i(r)} &\int_{\Omega} \left( \frac{\eps(u_r)}{2r}|\nabla \psi_{r,i,z}|^2 +\frac{1}{r^3} B(\psi_{r,i,z})u_r \right) \,dx\\
  \leq &(1+4c\delta)  \int_{\Omega} \left( \frac{\eps(u_r)}{2r}|\nabla \psi_r|^2 +\frac{1}{r^3} B(\psi_r)u_r \right)\,dx.
\end{align}
Note that the supports of the functions $\psi_{r,i,z}$ are pairwise disjoint. \\
We define $\psi_{i,z}(x) \coloneqq \psi_{r,i,z}(\frac{x-x_i-s(r)z}{r})$, so that by the change of variable formula, 
\begin{align}
\frac{1}{\alpha(r)} &\int_{\Omega} \left((Q_r \psi_r) \rho_r - \frac{\eps(u_r)}{2r}|\nabla \psi_r|^2 -\frac{1}{r^3} B(\psi_r)u_r \right)\,dx\\
 \leq &\frac{1}{\alpha(r)} \sum_{i=1}^M \sum_{z\in Z_{N_i(r)}}\left[ \int_{\Omega} \left((Q_r \rho_r) \psi_{r,i,z} - \frac{\eps(u_r)}{2r}|\nabla \psi_{r,i,z}|^2 -\frac{1}{r^3} B(\psi_{r,i,z})u_r\right) \, dx + 4c \delta \right]\\
 \leq &\tilde{c}\delta + \frac{1}{\alpha(r)}\sum_{i=1}^M \sum_{z\in Z_{N_i(r)}}\left[ \int_{\R^3} \left((Q_1 \rho_i) \psi_{i,z}- \frac{\eps(u_i)}{2}|\nabla \psi_{i,z}|^2 - B(\psi_{i,z})u_i\right) \, dx  \right]\\
 \leq &\tilde{c}\delta +  \sum_{i=1}^M \frac{N_i(r)}{\alpha(r)}\sup_\psi \int_{\R^3} \left((Q_1 \rho_i)\psi - \frac{\eps(u_i)}{2}|\nabla \psi|^2 - B(\psi)u_i \,\right) dx.
\end{align}
Since $\frac{N_i(r)}{\alpha(r)} \to z_i$, combining the last estimate with \eqref{local energy} we obtain
\begin{align}
 \limsup_{r\to 0}\frac{E_r(\rho_r,u_r)}{\alpha(r)} \leq & \sum_{i=1}^M  \varphi(p_i) + \tilde{c}\delta.
\end{align}

Taking a diagonal sequence in $\delta$ we get weak$\ast$ convergence of $\frac{\rho_r}{\alpha(r)}$ to $\sum_{i=1}^M p_i\delta_{x_i}$ and we deduce the upper bound inequality.
\end{proof}

\begin{proof}[Proof the $\Gamma$-$\liminf$]

Here we cannot choose $\rho_r$, but are free to choose $\psi_r$.
The key in the proof will be to group most of the solutes in clusters and to construct an almost optimal $\psi$ for the cluster separately.
The fact that $B$ grows superquadratically will allow us to control the interaction with the remaining solutes using the energy. 

Let $\alpha(r)\to \infty$, $\alpha(r)r^3 \to 0$, and let $\{\rho_r\}_{r>0} \in \mathcal{A}_r$ be a family of $\R^N$-valued finite Radon measures with $\rho_r/\alpha(r) \weakstar \rho$, with $u_r\in L^\infty(\Omega,\{0,1\})$ the corresponding minimizer of $E_r(\rho_r,\cdot)$.
 
 Take $l=r^{1/3}$ and define the cubes around an  offset lattice $l(\Z^3 + z_0)$ as $Q_{z,z_0,l,p}=l(z+z_0) + [-p/2,p/2)^3$ for $z\in \Z^3$, so that for $p=l$ the cubes form a partition of $\R^3$. 
 Define for $z_0\in [0,1)^3$ the set
 \begin{align}
 A_{z_0,r} \coloneqq \Omega \setminus \bigcup_{z\in \Z^3} Q_{z,z_0,r^{1/3},r^{1/3}-r^{2/3}},
 \end{align}
which consists of a periodic arrangement of plates of thickness $r^{2/3}$ oriented in the three cardinal directions with spacing $r^{1/3}$, intersected with $\Omega$.

Choose $z_0\in [0,1)^3$ such that
\begin{align}
 \rho_r(A_{z_0,r})\leq C r^{1/3}\alpha(r).
\end{align}
For every $z\in \Z^3$ we define the Voronoi cube $Q_{z,r} \coloneqq Q_{z,z_0,r^{1/3},r^{1/3}}$ and the slightly smaller cube $q_{z,r} \coloneqq Q_{z,z_0,r^{1/3},r^{1/3}-r^{2/3}}$. 
Set $\overline{u_r}:\R^3\to\{0,1\}$ as
\begin{align}
 \overline{u_r} \coloneqq \begin{cases}
            u_r &\text{in }Q_{z,z_0,r^{1/3}-r^{2/3} + Lr},\\
            1 &\text{elsewhere},
           \end{cases}
\end{align}
where $L>0$ is chosen such that $U_{r,\overline{\rho}_{r}} \leq 0$ outside $Q_{z,z_0,r^{1/3}-r^{2/3} + Lr}$ for all $z \in \Z^3$.

We modify $\rho_r$ by removing charges close to $\partial\Omega$ and in $A_{z_0,r}$, with $\rho_{z,r} \coloneqq \rho_r \llcorner Q_{z,z_0,r^{1/3}-r^{2/3}}$ and  $\overline{\rho_r} \coloneqq \sum_{z \in Z^3\,:\,Q_{z,r}\subset \Omega} \rho_{z,r}$.

We want to replace $E_r(\rho_r,u_r)$ with $E_r(\overline{\rho_r},\overline{u_r})$. Note that $\overline{u_r} \geq u_r$, which by itself decreases most terms in the energy, except for the Lennard-Jones term $\frac{1}{r^3}\int_{\Omega} uU_{r,\rho_r}\,dx$ and the total variation $\frac{\gamma}{r^2}|Du|(\Omega)$.

The total variation of $\overline{u_r}$ can be estimated by
\begin{align}
 |D\overline{u_r}|(\Omega)\leq & |Du_r|( \bigcup_{z \in \Z^3} Q_{z,z_0,r^{1/3}-r^{2/3} + Lr}) + \|Tu_r - 1\|_{L^1(\bigcup_{z \in \Z^3} \partial Q_{z,z_0,r^{1/3}-r^{2/3} + Lr})} \\
 \leq & |Du_r|(\Omega) + \Hm^2(B_{z_0,r}),
\end{align}
where $Tu_r\in L^1_\loc(\bigcup_{z \in \Z^3} \partial Q_{z,z_0,r^{1/3}-r^{2/3} + Lr})$ is the trace of $u_r$ from inside the cubes $Q_{z,z_0,r^{1/3}-r^{2/3} + Lr}$, and $B_{z_0,r}$ is the set of all points on $\bigcup_{z \in \Z^3} \partial Q_{z,z_0,r^{1/3}-r^{2/3} + Lr}$ with $u=0$ on the line segment pointing outwards up to $\partial Q_{z,z_0,r^{1/3}}$,
\begin{align}
 B_{z_0,r} \coloneqq \left\{x\in \bigcup_{z \in \Z^3} \partial Q_{z,z_0,r^{1/3}-r^{2/3} + Lr}\,:\,u_r(x+t\nu_x)=0\textrm{ for }\Hm^1\textrm{-almost all } t \in[0,\frac12 r^{2/3}]\right\},
\end{align}
$\nu_x$ being the outer unit normal to $\partial Q_{z,z_0,r^{1/3}-r^{2/3} + Lr}$, which is well-defined $\Hm^2$-almost everywhere. 
Using Fubini's theorem, we estimate the $\Hm^2$-measure of $B_{z_0,r}$ by
\begin{align}
 \Hm^2(B_{z_0,r}) \leq & 2 r^{-2/3} \int_{A_{z_0,r}} (1-u_r)\,dx,
\end{align}
so that
\begin{align}\label{TV estimate modified}
 \frac{\gamma}{r^2}|D\overline{u_r}|(\Omega)\leq &\frac{\gamma}{r^2}|Du_r|(\Omega) + 2\gamma\frac{r^{1/3}}{r^3} \int_{\Omega}(1-u_r)\,dx. 
\end{align}

The Lennard-Jones term can be treated using the linearity of $U_{r,\rho}$ with respect to $\rho$,
\begin{align}\label{LJ estimate modified}
 \frac{1}{r^3}\int_{\Omega} \overline{u_r} U_{r,\overline{\rho_r}} - u_r U_{r,\rho_r}\,dx = & \frac{1}{r^3}\left(\int_{\Omega} (\overline{u_r} - u_r) U_{r,\overline{\rho_r}} - \int_{\Omega} u_r U_{r,\rho_r - \overline{\rho_r}}\,dx\right)\\
 \leq &|\rho_r - \overline{\rho_r}|(\Omega)\int_{\R^3} U_{LJ}^-(x)\,dx\\
 \leq &C r^{\frac13} \alpha(r),
\end{align}

which is much smaller than $\alpha(r)$.

\begin{figure}
 \begin{center}
 \includegraphics{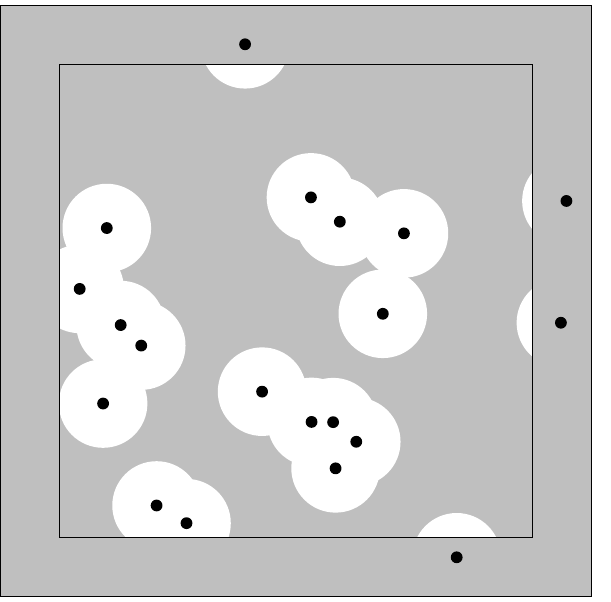}
 \end{center}
 \caption{To study the $\Gamma$-$\liminf$, we remove all solutes in the outer region of each cube to obtain $\overline{\rho}$ and we set $\overline{u}\coloneqq 1$ there. This may increase the energy only slightly. Then we cut off the electric potential to obtain a competitor $\overline{\psi}$ to the cell problem for the limit energy $\varphi(\overline{\rho}(Q))$.}
\end{figure}

We have estimated all the local terms in $E_r(\overline{\rho_r},\overline{u_r})$, leaving the nonlocal electrical energy. Since $\overline{u_r}\geq u_r$, we have
\begin{align}
& \max_\psi \int_{\Omega} Q_r \rho_r \psi - \frac{\eps(u_r)}{2r}|\nabla \psi|^2 - \frac{1}{r^3} u_r B(\psi)\,dx\\
\geq &\int_{\Omega} Q_r \rho_r \psi_r - \frac{\eps(\overline{u_r})}{2r} |\nabla \psi_r|^2 - \frac{1}{r^3}\overline{u_r} B(\psi_r)\,dx
\end{align}
for any $\psi_r\in H^1_0(\Omega)$.

We first choose a separate $\psi_{z,r}$ for every $z\in \Z^3$, namely the maximizer of
\begin{align}
 \int_{\R^3} Q_r \rho_{z,r} \psi - \frac{\eps(\overline{u_r})}{2r}|\nabla \psi|^2 - \frac{1}{r^3} \overline{u_r} B(\psi)\,dx.
\end{align}

Now we use the convexity condition \ref{B2} to get the estimate
\begin{align}
 &\int_{\R^3} \frac{\eps(\overline{u_r})}{2r}|\nabla \psi_{z,r}|^2 + \frac{1}{r^3} \overline{u_r} B(\psi_{z,r})\,dx\\
 \leq &c\int_{\R^3} Q_r \rho_{z,r} \psi_{z,r} - \frac{\eps(\overline{u_r})}{2r}|\nabla \psi_{z,r}|^2 - \frac{1}{r^3} \overline{u_r} B(\psi_{z,r})\,dx.
\end{align}

We can now cut off each $\psi_{z,r}$ to get a function $\overline{\psi_{z,r}}$ in $H^1_0(Q_{z,z_0,r^{1/3},r^{1/3}})$, which we glue together. As in the upper bound, we use the fact that $\overline{u_r}=1$ in $Q_{z,z_0,r^{1/3},r^{1/3}} \setminus Q_{z,z_0,r^{1/3},r^{1/3}, r^{2/3}/2}$, and choose $N_{z,r}\in \{1,\ldots,\lfloor r^{-1/3}/2\rfloor\}$ such that
\begin{align}
 \int_{Q_{z,z_0,r^{1/3},r^{1/3}-N_{z,r}r}\setminus Q_{z,z_0,r^{1/3},r^{1/3}-(N_{z,r}+1)r}} \frac{\eps_1}{2r}|\nabla \psi_{z,r}|^2 + \frac{1}{r^3}  B(\psi_{z,r})\,dx\\
 \leq 2r^{1/3} \int_{Q_{z,z_0,r^{1/3},r^{1/3}} \setminus Q_{z,z_0,r^{1/3},r^{1/3}, r^{2/3}/2}}  \frac{\eps_1}{2r}|\nabla \psi_{z,r}|^2 + \frac{1}{r^3}  B(\psi_{z,r})\,dx.
\end{align}

Take $\eta_{z,r}\in C_c^\infty(Q_{z,z_0,r^{1/3},r^{1/3}-N_{z,r}r})$ to be $1$ in $Q_{z,z_0,r^{1/3},r^{1/3}-(N_{z,r}+1)r}$ and with $\|\nabla \eta_{z,r}\|_{L^\infty}\leq 2/r$. Setting $\overline{\psi_{z,r}} \coloneqq \psi_{z,r}\eta_{z,r}$, we obtain
\begin{align}
 \int_{\R^3} \frac{\eps(\overline{u_r})}{2r}|\nabla \overline{\psi_{z,r}}|^2 + \frac{1}{r^3} \overline{u_r} B(\overline{\psi_{z,r}})\,dx \leq (1+16r^{1/3})\int_{\R^3} \frac{\eps(\overline{u_r})}{2r}|\nabla \psi_{z,r}|^2 + \frac{1}{r^3} \overline{u_r} B(\psi_{z,r})\,dx.
\end{align}

Since $Q_r\rho_{z,r}$ is supported where $\overline{\psi_{z,r}}=\psi_{z,r}$, the term $\int_{\R^3} Q_r\rho_{z,r} \psi_{z,r}\,dx$ remains unchanged.

We define the global $\overline{\psi_r} \coloneqq \sum_{z\in \Z^3\,:\,Q_{z,r}\subset \Omega} \overline{\psi_{z,r}}\in H^1_0(\Omega)$, which satisfies
\begin{align}\label{H^1 estimate modified potential}
 \int_{\Omega}\frac{\eps(\overline{u_r})}{2r}|\nabla \overline{\psi_r}|^2 + \frac{1}{r^3}\overline{u_r}B(\overline{\psi_r})\,dx
 \leq (1+ 16r^{1/3}) \sum_{z\in \Z^3} \int_{\R^3} \frac{\eps(\overline{u_r})}{2r}|\nabla \psi_{z,r}|^2 + \frac{1}{r^3} \overline{u_r} B(\psi_{z,r})\,dx.
\end{align}

By \eqref{H^1 estimate modified potential}, \eqref{LJ estimate modified}, \eqref{TV estimate modified} we see that
\begin{align}\label{eq: estimateoalpha}
 &E_r(\overline{\rho_r},\overline{u_r},\overline{\psi_r}) 
 \leq E_r(\rho_r,u_r,\overline{\psi_r}) + o(\alpha(r)) + \int_{\Omega}Q_r(\overline{\rho_r} - \rho_r)\overline{\psi_r}\,dx\\
\end{align}
We claim that $\int_{\Omega} Q_r(\overline{\rho_r} - \rho_r)\overline{\psi_r}\,dx = o(\alpha(r))$. 
By \ref{B1}, find $K>0$ such that $B(x) \geq \frac12 |x|^2$ for all $|x| \geq K$.
Define $\overline{\psi_r}^b \coloneqq \overline{\psi_r} \1_{|\overline{\psi_r}| \leq K}$ which is clearly bounded by $K$. 
Moreover, $B(\overline{\psi_r} - \overline{\psi_r}^b) \geq |\overline{\psi_r} - \overline{\psi_r}^b|^2$. 
For the bounded part of $\overline{\psi_r}$ we find the simple estimate
\[
 \int_{\Omega} Q_r(\overline{\rho_r} - \rho_r)\overline{\psi_r}^b\,dx \leq C K |\overline{\rho_r} - \rho_r|(\Omega) = C K |\rho_r|(A_{z_0,r}) \leq C r^{\frac13} \alpha(r).
\]
Hence, it remains to control $ \int_{\Omega} Q_r(\overline{\rho_r} - \rho_r)(\overline{\psi_r} - \overline{\psi_r}^b)\,dx$.
Note that 
\[
 \int_{\Omega} Q_r(\overline{\rho_r} - \rho_r)(\overline{\psi_r} - \overline{\psi_r}^b)\,dx \leq C \|Q_r(\overline{\rho_r} -\rho_r)\|_{L^\infty}^{1/2} \left(|\rho_r - \overline{\rho_r}|(\Omega)\right)^{1/2}\|\overline{\psi_r} - \overline{\psi_r}^b\|_{L^2(A_{z_0,r})}. 
\]
Here we use the technical assumption of a solute concentration bound in the definition of $\mathcal{A}_r$ (see \eqref{def: Ar}) to obtain
\begin{align}
\|Q_r(\overline{\rho_r} -\rho_r)\|_{L^\infty} \leq CM/r^3.
\end{align}
Again, by construction, we also have that $|\overline{\rho_r} - \rho_r|(\Omega) = |\rho_r|(A_{z_0,r}) \leq cr^{1/3} \alpha(r)$.
To estimate the $L^2$-norm of $\overline{\psi_r} - \overline{\psi_r}^b$ on $A_{z_0,r}$, note first that wherever $\overline{u_r}$ is $1$, we can simply estimate $|\overline{\psi_r} - \overline{\psi_r}^b|^2 \leq u B(\overline{\psi_r} - \overline{\psi_r}^b)$. 
The only parts of $A_{z_o,r}$ where $\overline{u_r}$ might not be $1$, are located in a neighborhood of thickness $Lr$ around the boundaries of the $q_{z,r}$.
Here, we can use Poincar\'e's inequality to find altogether
\begin{align}
 \|\overline{\psi_r} - \overline{\psi_r}^b\|_{L^2(A_{z_0,r})} \leq & Cr^{3/2}\sqrt{\frac{1}{r^3}\int_{\Omega}\overline{u_r}B(\overline{\psi_r}- \overline{\psi_r}^b)\,dx + \int_{\Omega} \frac{\eps(\overline{u_r})}{2r} \left|\nabla \left(\overline{\psi_r} - \overline{\psi_r}^b\right)\right|^2 \, dx } \\
 \leq & Cr^{3/2}\sqrt{\alpha(r)},
\end{align}
and consequently
\begin{align}
 \|Q_r(\overline{\rho_r} -\rho_r)\|_{L^\infty}^{1/2} \left(|\rho_r - \overline{\rho_r}|(\Omega)\right)^{1/2}\|\overline{\psi_r} - \overline{\psi_r}^b\|_{L^2(A_{z_0,r})} \leq C r^{1/6}\alpha(r).
\end{align}
This proves the claim.

Hence, the energy $E_r(\overline{\rho_r},\overline{u_r},\overline{\psi_r})$ can be localized to the different cubes $Q_{z,z_0,r^{1/3},r^{1/3}}$, where $\overline{\psi_r}=\overline{\psi_{z,r}}$ has not much less energy than the maximizer $\psi_{z,r}$ by \eqref{H^1 estimate modified potential}. 
We have
\begin{align}
&\sum_{z\in\Z^3\,:\,Q_{z,r}\subset \Omega} \varphi(\rho_{z,r}(\Omega))\\
\leq &\sum_{z\in \Z^3\,:\,Q_{z,r}\subset \Omega} E_r(\rho_{z,r},\overline{u_r},\psi_{z,r})\\
 \leq & E_r(\overline{\rho_r},\overline{u_r},\overline{\psi_r}) + 16c r^{1/3} \sum_{z\in \Z^3\,:\,Q_{z,r}\subset \Omega} \int_{\R^3} \left(\frac{\eps(\overline{u_r})}{2r}|\nabla \psi_{z,r}|^2 + \frac{1}{r^3} \overline{u_r} B(\psi_{z,r})\right)\,dx\\
 \leq & \max_\psi E_r(\rho_r,u_r,\psi) + o(\alpha(r)) + 16 c^2 r^{1/3} \sum_{z\in \Z^3\,:\,Q_{z,r}\subset \Omega}\int_{\R^3} \left(Q_r\overline{\rho_{z,r}} \psi_{z,r} -  \frac{\eps(\overline{u_r})}{2r}|\nabla \psi_{z,r}|^2 - \frac{1}{r^3} \overline{u_r} B(\psi_{z,r}) \right)\,dx,
\end{align}
where in the last inequality we used \eqref{eq: estimateoalpha}, and once again the conditions \ref{B1} and \ref{B2}. Now
\begin{align}
  \sum_{z\in \Z^3\,:\,Q_{z,r}\subset \Omega}\int_{\R^3} \left(Q_r\overline{\rho_{z,r}} \psi_{z,r} -  \frac{\eps(\overline{u_r})}{2r}|\nabla \psi_{z,r}|^2 - \frac{1}{r^3} \overline{u_r} B(\psi_{z,r})\right)\,dx \leq C |\rho|(\Omega) \leq C \alpha(r).
\end{align}
The sum of the $\varphi(\rho_{z,r}(\Omega))$ can be written as
\begin{align}
 \sum_{z\in\Z^3\,:\,Q_{z,r}\subset \Omega} \varphi(\rho_{z,r}(\Omega))
=  E_0(\overline{\rho_r}),
\end{align}
where $\overline{\rho_r} \coloneqq \sum_{z \in \Z^3\,:\,Q_{z,r}\subset \Omega} \rho_{z,r}(\Omega) \delta_{z+z_0}$.

We have $\overline{\rho_r}/\alpha(r)\weakstar \rho$ since $\rho_r/\alpha(r) \weakstar \rho$. 
Since $E_0$ is positively $1$-homogeneous and vaguely lower semi-continuous (see Lemma \ref{lemma: subadditivity}), we conclude that
\begin{align}
 E_0(\rho) \leq & \liminf_{r\to 0} \frac{E_0(\overline{\rho_r})}{\alpha(r)}\\
 \leq & \liminf_{r\to 0} \frac{\max_\psi E_r(\rho_r,u_r,\psi)+o(\alpha(r))}{\alpha(r)}\\
 = & \liminf_{r\to 0} \frac{ E_r(\rho_r,u_r)}{\alpha(r)}.
\end{align}
 \end{proof}

\section{The Case $B=0$}\label{sec: noB}

In this section, we consider the energy without ionic effect i.e., $B=0$. 
Unlike in the case with an ionic effect, in this section the electrical energy consists of two competing effects, the self-energy of each solute ion and the electrical interaction of the different solutes. 
Both occur on different scales which leads to three different scaling regimes depending on the scaling law of $r$ and the number of solutes $\alpha(r)$ (see Subsection \ref{sec: heuristics}).

\subsection{The Subcritical Regime}\label{sec: subcriticalB}

In this section we assume that $\alpha(r) r \to 0$.
By the heuristics discussed in Subsection \ref{sec: heuristics} the self-energy dominates the interaction energy in this regime.
Indeed, in this section we prove Theorem \ref{thm: GammanoBsub}, i.e, we show that the rescaled energy $\frac{E_{r}}{\alpha(r)} $ where $E_{r}$ is defined as in \eqref{energy definition} $\Gamma$-converges with respect to vague convergence of $\frac{\rho_r}{\alpha(r)}$ in $\mathcal{M}(\Omega;\R^N)$ to the energy $E^{sub}: \mathcal{M}(\Omega,\R^N) \rightarrow [0,\infty]$ defined by 
\[
E^{sub}(\rho) = \begin{cases} \int_{\Omega} \varphi\left( \frac{d \rho}{d|\rho|} \right) \,d|\rho| &\text{ if each component of the vector-valued measure } \rho \text{ is nonnegative }, \\ +\infty &\text{ otherwise.}\end{cases}
\]
The proof is split into Propositions \ref{prop: subcomp}, \ref{prop: sublower}, and \ref{prop: subupper}.\\
We start with the compactness result which is immediate as the energy $E_r(\rho_r)$ contains the total variation of the measure $\rho_r$.

\begin{proposition}[Compactness]\label{prop: subcomp}
Let $r \to 0$ and $\alpha(r) \to \infty$ such that $\alpha(r) r \to 0$. 
Assume that $a >0$ is so large that $E_r(\rho) \geq c |\rho|(\Omega)$ for some $c>0$.
Let $\{\rho_r\}_{r>0}$ be a sequence of measures such that $E_{r}(\rho_r) \leq C \alpha(r)$ for a universal constant $C > 0$.
 Then there exists a measure $\rho \in \mathcal{M}(\Omega;\R^N)$ such that $\rho^i$ is a nonnegative measure for all $i=1,\dots,N$ and up to a subsequence it holds
 \[
  \frac{ \rho_r }{\alpha(r)} \weakstar\rho \text{ in } \mathcal{M}(\Omega). 
 \]
\end{proposition}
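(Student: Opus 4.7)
The proof should be essentially immediate from the coercivity hypothesis, so my plan is to split it into three short steps.

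First, I would invoke the coercivity assumption directly. Since $a$ is chosen so that $E_r(\rho) \geq c|\rho|(\Omega)$ uniformly in $r$, the energy bound $E_r(\rho_r)\leq C\alpha(r)$ yields
\[
|\rho_r|(\Omega) \leq \frac{C}{c}\,\alpha(r),
\]
so the rescaled measures $\rho_r/\alpha(r)$ form a sequence in $\mathcal{M}(\Omega;\R^N)$ with uniformly bounded total variation.

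Second, I would extract a vague limit. Since $C_c(\Omega;\R^N)$ is separable, the Riesz representation theorem together with Banach--Alaoglu implies that bounded subsets of $\mathcal{M}(\Omega;\R^N)$ are vaguely sequentially precompact. Hence there exists $\rho \in \mathcal{M}(\Omega;\R^N)$ and a subsequence (not relabeled) such that $\rho_r/\alpha(r) \weakstar \rho$.

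Third, I would check the sign of the components. By the definition of $\mathcal{A}_r(\Omega)$, each component $\rho_r^i = \sum_{j=1}^{m_i} \delta_{x_j^i}$ is a nonnegative Radon measure. Testing against arbitrary $\phi \in C_c(\Omega)$ with $\phi \geq 0$ gives $\langle \phi, \rho_r^i/\alpha(r)\rangle \geq 0$, and passing to the limit shows $\langle \phi, \rho^i\rangle \geq 0$, so each $\rho^i$ is nonnegative as claimed.

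I do not expect any obstacle here; the only thing one might pause over is that vague (not weak-$\ast$) convergence is used, because the total mass can escape to infinity or to $\partial\Omega$ when $\Omega$ is unbounded or when solutes concentrate near the boundary, exactly as noted in the remark after Theorem \ref{thm: Bsuper}. But since boundedness of total variation is precisely what is needed to apply vague compactness, the argument goes through verbatim.
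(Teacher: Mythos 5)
Your argument is correct and follows exactly the paper's route: the coercivity assumption turns the energy bound into $|\rho_r|(\Omega)\lesssim \alpha(r)$, after which vague sequential compactness of bounded sets in $\mathcal{M}(\Omega;\R^N)$ and the componentwise nonnegativity of the admissible measures (each $\rho_r^i$ being a sum of Dirac masses) give the claim. The paper's proof is merely a condensed version of the same three steps, so nothing further is needed.
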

\begin{proof}
 By the assumptions it follows directly that $|\rho_r|(\Omega) \lesssim \alpha(r)$. 
 Then the existence of the convergent subsequence is classical.
\end{proof}
\begin{remark} 
 If one assumes that all solute molecules are separated at scale $r$ then one can use an argument similar to the heuristics to prove compactness even for $a=0$ provided that the integral of
 each Lennard-Jones potential is not too negative.
\end{remark}

Next, we show the $\liminf$-inequality.
We start with a lemma which allows us to find a clustering of the solutes whose interaction is negligible as $r \to 0$.

\begin{lemma}\label{lemma: cubes}
Let $L>0$, $u_r: \Omega \to \{0,1\}$, $\rho_r = \sum_{i=1}^N \sum_{j=1}^{m^i} e_i \delta_{x^{i,j}}$ for points $x^{i,j} \in \Omega$
, and $M= \sum_{i=1}^N m^i$. 
Then there exists $C>0$ such that for all $\delta > 2Lr$ there exist disjoint half-open cubes $(Q_k)_k$ of sidelength $\delta$ and cubes $(\tilde{Q}_k)_k$ with the same centers and sidelength $\delta + 2 Lr$ such that 
 \begin{enumerate}
 \item $\supp(\rho_r) \subseteq \bigcup_k Q_k$,
 \item $\sum_k \sum_{x^{i,j} \in Q_k, x^{i',j'} \notin Q_k} \frac{1}{|x^{i,j}-x^{i',j'}|} \leq C \frac{M^2}{\delta}$,
 \item $\sum_i \int_{(\tilde{Q}_i \setminus Q_i) \cap \Omega} r^{-2} \beta |Du_r| + r^{-3} \gamma (1-u_r) \, dx \leq C \frac{Lr}{\delta} \int_{\Omega} r^{-2} \beta |Du_r| + r^{-3} \gamma  (1-u_r) \, dx.$
 \end{enumerate}
\end{lemma}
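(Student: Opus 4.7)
The plan is an averaging argument over a uniformly random translation of a fixed cubic grid of sidelength $\delta$. Specifically, for each offset $z_0 \in [0,\delta)^3$, partition $\R^3$ into the family of half-open cubes $Q_k(z_0) = z_0 + \delta k + [0,\delta)^3$ for $k \in \Z^3$, and let $\tilde Q_k(z_0)$ denote the concentric cube with sidelength $\delta + 2Lr$. Condition (1) holds automatically for every $z_0$ once we discard empty cubes. Conditions (2) and (3) will be obtained simultaneously by showing that their $z_0$-averages over $[0,\delta)^3$ are bounded by the desired right-hand sides, so a suitable $z_0$ exists by Markov's inequality applied to the sum of the two quantities.

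For condition (2), I would fix two points $x^{i,j}$ and $x^{i',j'}$ at distance $d = |x^{i,j}-x^{i',j'}|$ and compute the probability (uniform over $z_0 \in [0,\delta)^3$) that they lie in different cubes of $\{Q_k(z_0)\}_k$. Coordinatewise independence and the fact that, for each axis, a grid hyperplane separates two real numbers $a,b$ with probability $\min(|a-b|/\delta,1)$ yield the bound
\begin{align}
\mathbb{P}\bigl(x^{i,j},x^{i',j'} \text{ in different cubes}\bigr) \leq 3\min\!\bigl(d/\delta,1\bigr).
\end{align}
Dividing by $d$ and summing over the at most $M^2$ pairs gives the $z_0$-average of the sum in (2) bounded by $3M^2/\delta$.

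For condition (3), I would compute the $z_0$-average of $\sum_k \int_{(\tilde Q_k \setminus Q_k)\cap\Omega} f(x)\,dx$ for $f = r^{-2}\beta|Du_r| + r^{-3}\gamma(1-u_r)$ by Fubini. For each fixed $x$, the set of $z_0 \in [0,\delta)^3$ for which $x$ lies in $\tilde Q_k(z_0) \setminus Q_k(z_0)$ for some $k$ is contained in the set where $x$ is within $Lr$ of some grid hyperplane; by a coordinatewise argument this has measure at most $3 \cdot 2Lr \cdot \delta^2$ in $[0,\delta)^3$, and each such $x$ belongs to at most $3^3 - 1 = 26$ shells $\tilde Q_k \setminus Q_k$ (only neighboring cubes can contribute). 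Hence the average is at most $C(Lr/\delta)\int_\Omega f\,dx$ for a universal $C$.

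Adding the two normalized averages and applying Markov's inequality produces a single $z_0^\star \in [0,\delta)^3$ for which both (2) and (3) hold simultaneously with a possibly larger but still universal constant; we then take $Q_k \coloneqq Q_k(z_0^\star)$ and $\tilde Q_k \coloneqq \tilde Q_k(z_0^\star)$, restricted to the indices $k$ with $Q_k \cap \supp\rho_r \neq \emptyset$ so that (1) is satisfied. The main technical care, and essentially the only obstacle, lies in bookkeeping the constants from the coordinatewise separation probabilities and the multiplicity of overlapping shells near corners; the conceptual content is straightforward once one commits to the random-offset viewpoint.
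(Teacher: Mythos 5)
Your proposal is correct and follows essentially the same route as the paper: an averaging (random offset) argument over translations $z_0\in[0,\delta)^3$ of a cubic grid, with the separation probability $3\min(|x-y|/\delta,1)$ for condition (2), a Fubini/slab estimate of thickness $Lr$ near the grid planes for condition (3), and a Markov-type selection of a single good offset satisfying both simultaneously. The paper phrases the selection as intersecting two sets of offsets of measure $\tfrac34\delta^3$ rather than applying Markov to the normalized sum, but this is the same argument.
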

\begin{proof}
 For $z \in \R^3$ we define $Q_{z,\delta} \coloneqq z + [-\delta/2,\delta/2)^3$. 
 Notice that given $x,y\in \Omega$, if $x \in Q_{z,\delta}$ and $y \notin Q_{z,\delta}$, then $z \in Q_{x,\delta}$ and at least one component $z_j$ of $z$ lies in an interval of length less than $|x_j-y_j| \leq |x-y|$. 
 Hence, the measure of all $z$'s having this property can be estimated by $3 \min\{|x-y|,\delta\} \delta^2$.  
 Therefore,
 \begin{align*}
  &\int_{[0,\delta)^3} \sum_{k \in \delta \Z^3} \sum_{x^{i,j} \in Q_{k + z,\delta}, x^{i',j'} \notin Q_{k+z,\delta}} \frac{1}{|x^{i,j}-x^{i',j'}|} \, dz \\ 
  \leq &3 \sum_{x^{i,j} , x^{i',j'}} \frac{\min\{\delta,|x^{i,j}-x^{i',j'}|\} \delta^2}{|x^{i,j}-x^{i',j'}|} \\
  \leq &3 M^2 \, \delta^2.
 \end{align*}
 Therefore, for a subset of $[0,\delta)^3$ of measure $\frac{3}{4} \delta^3$ we have the inequality
 \begin{equation}\label{eq: estlemma1}
 \sum_k \sum_{x^{i,j} \in Q_k, x^{i',j'} \notin Q_k} \frac{1}{|x^{i,j}-x^{i',j'}|} \leq  4 \frac{M^2}{\delta}.
 \end{equation}
Next, note that a similar argument shows that
\begin{align*}
&\int_0^{\delta} \sum_{k\in \Z}\int_{ \left((s-\delta/2 + \delta k - Lr,s-\delta/2 +\delta k) \times \R^2 \cup (s+\delta/2+\delta k,s+\delta/2+\delta k+Lr) \times \R^2 \right) \cap \Omega} r^{-2}\beta |Du_r| + r^{-3} \gamma  (1-u_r) \, dx \,ds \\
\leq 
&2 Lr \int_{\Omega} r^{-2}\beta |Du_r| + r^{-3} \gamma  (1-u_r) \, dx.
\end{align*}
Repeating the same argument in the other two cardinal directions and combining shows that for a subset of $[0,\delta)^3$ of measure $\frac34 \delta^3$ it holds
\begin{equation}\label{eq: estlemma2}
\int_{(Q_{k+z,\delta+2Lr} \setminus Q_{k+z,\delta}) \cap \Omega} r^{-2}\beta |Du_r| + r^{-3} \gamma  (1-u_r) \, dx \leq C \frac{L_r}{\delta} \int_{\Omega} r^{-2}\beta |Du_r| + r^{-3} \gamma  (1-u_r) \, dx
\end{equation}
In particular, there exist $z\in[0,\delta)^3$ such that \eqref{eq: estlemma1} and \eqref{eq: estlemma2} are satisfied. 
\end{proof}

Now we are ready to show the $\liminf$-inequality.

\begin{proposition} \label{prop: sublower}
 Let $r \to 0$.
 Let $\alpha(r) \to \infty$ such that $\alpha(r) r \to 0$.
 Assume that $\frac{\rho_r}{\alpha(r)} \weakstar \rho$ in $\mathcal{M}(\Omega;\R^N)$. 
Then 
\[
  \liminf_{r \to 0} \frac1{\alpha(r)}E_{r}(\rho_r) \geq \int_{\Omega} \varphi\left( \frac{d\rho}{d|\rho|} \right) \,d|\rho|.
\]
\end{proposition}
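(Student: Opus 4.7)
The plan is to cluster the support of $\rho_r$ into blocks on an intermediate scale $\delta(r)$ via Lemma \ref{lemma: cubes}, control the cross-cluster electrical interactions, and recognize each localized contribution as (an approximation of) the cell-problem value defining $\varphi$. Assume $\liminf_{r\to 0} E_r(\rho_r)/\alpha(r) < \infty$ (so $|\rho_r|(\Omega) \leq C\alpha(r)$) and let $(u_r,\psi_r)$ attain $E_r(\rho_r)$. I will choose $\delta(r) \coloneqq \sqrt{r\alpha(r)}$ and a slowly diverging $L(r) \to \infty$ with $2L(r)r \ll \delta(r)$; the subcritical hypothesis $r\alpha(r) \to 0$ combined with $\alpha(r) \to \infty$ then yields $\delta \to 0$, $\delta/r \to \infty$, and $r\alpha(r)/\delta \to 0$ simultaneously. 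Apply Lemma \ref{lemma: cubes} with these parameters to produce disjoint cubes $\{Q_k\}$ of sidelength $\delta$ covering $\supp \rho_r$ and buffered cubes $\{\tilde Q_k\}$ of sidelength $\delta+2Lr$, and set $\rho_{r,k} \coloneqq \rho_r\llcorner Q_k$.

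Because $B=0$, the electrical PDE is linear and the energy is a positive quadratic form in $Q_r\rho$. Decomposing $\psi_r = \sum_k \psi_r^{(k)}$ where $\psi_r^{(k)}$ is the potential induced by $\rho_{r,k}$, one obtains
\[
E_r^{el}(\rho_r,u_r) = \sum_k E_r^{el}(\rho_{r,k},u_r;\Omega) + \tfrac12\sum_{k\neq k'}\int_\Omega Q_r\rho_{r,k}\,\psi_r^{(k')}\,dx .
\]
The Green's function of $-\operatorname{div}(\eps(u_r)/r\,\nabla\cdot)$ satisfies $G_{r,u_r}(x,y) \leq Cr/(\eps(0)|x-y|)$ by \cite{Littman1963}, so each cross integral is controlled by $Cr\sum_{x^{ij}\in Q_k,\,x^{i'j'}\in Q_{k'}}|x^{ij}-x^{i'j'}|^{-1}$; summing and invoking Lemma \ref{lemma: cubes}(2) yields total cross contribution $O(rM^2/\delta) = O(r\alpha(r)^2/\delta) = o(\alpha(r))$.

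For the remaining terms, the mass $a|\rho_r|(\Omega)$ splits exactly and $U_{r,\rho_r}=\sum_k U_{r,\rho_{r,k}}$; the tails of $U_{r,\rho_{r,k}}$ outside $\tilde Q_k$ contribute at most $|\rho_{r,k}|(\Omega)\int_{\R^3\setminus B_L(0)}|U^i_{LJ}|\,dx$, summing to $o(\alpha(r))$ by (A2) as $L\to\infty$. Lemma \ref{lemma: cubes}(3) bounds the surface and volume contributions in the buffer $\bigcup_k(\tilde Q_k\setminus Q_k)$ by $O(Lr/\delta)$ times their totals, i.e.\ $o(1)$. Combining these pieces,
\[
 E_r(\rho_r) \geq \sum_k E_r(\rho_{r,k}; \tilde Q_k) - o(\alpha(r)).
\]
Rescaling $y = x/r$ turns each summand into $E_1(\tilde\rho_{r,k}; \tilde Q_k/r)$ on a cube of rescaled sidelength $\delta/r + 2L \to \infty$. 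A cutoff argument analogous to that in Lemma \ref{lemma: optimalprofiles}, extending an optimal $(u,\psi)$ from $\tilde Q_k/r$ to $\R^3$ (tail costs controlled by the integrability of $U_{LJ}^i$ and the $1/|x|$-decay of the electric potential), then shows $E_1(\tilde\rho_{r,k};\tilde Q_k/r) \geq E_1(\tilde\rho_{r,k};\R^3) - o(1)$ per cube, and feeding the constant sequence $z_k=1$, $\rho_k=\tilde\rho_{r,k}$ into the definition of $\varphi$ yields $E_1(\tilde\rho_{r,k};\R^3) \geq \varphi(\rho_{r,k}(\Omega))$.

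Define $\bar\rho_r \coloneqq \sum_k \rho_{r,k}(\Omega)\delta_{z_k}$ with $z_k$ the center of $Q_k$. Since $\delta \to 0$ and $|\rho_r|(\Omega) \leq C\alpha(r)$, the atomized measures $\bar\rho_r/\alpha(r)$ still converge vaguely to $\rho$, and each $\rho_{r,k}(\Omega)$ has non-negative components so that $\rho$ is componentwise non-negative. By positive $1$-homogeneity of $\varphi$, $\tfrac{1}{\alpha(r)}\sum_k \varphi(\rho_{r,k}(\Omega)) = E_0(\bar\rho_r/\alpha(r))$; since $\varphi$ is convex and positively $1$-homogeneous by Lemma \ref{lemma: subadditivity}, $E_0$ is vaguely lower semicontinuous on non-negative vector measures, which delivers the claimed inequality. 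The principal obstacle is the control of the long-range cross electrical interactions: absent any clustering, the cumulative cross-energy would scale like $r\alpha(r)^2$, overwhelming $\alpha(r)$ whenever $r\alpha(r) \gtrsim 1$. Lemma \ref{lemma: cubes} is tailored precisely to reduce this sum to $O(rM^2/\delta)$, and the subcritical regime is exactly the condition under which a scale $\delta$ with $\delta \gg r\alpha(r)$ and $\delta \to 0$ exists.
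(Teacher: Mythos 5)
Your overall architecture matches the paper's: cluster via Lemma~\ref{lemma: cubes} at scale $\delta\sim\sqrt{r\alpha(r)}$, kill the cross--cluster electrical terms using the Green's function bound and Lemma~\ref{lemma: cubes}(2), control the buffer $\tilde Q_k\setminus Q_k$ via Lemma~\ref{lemma: cubes}(3), atomize, and invoke lower semicontinuity of $E_0$. However, there is a genuine gap at the step where you pass from the per--cube quantity to the cell problem on $\R^3$.

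You decompose $\psi_r=\sum_k\psi_r^{(k)}$ with each $\psi_r^{(k)}$ the \emph{Dirichlet} solution on $\Omega$, land on $\sum_k E_r^{el}(\rho_{r,k},u_r;\Omega)$, pass to the (even smaller) $\sum_k E_r(\rho_{r,k};\tilde Q_k)$, and then claim via a cutoff argument ``analogous to Lemma~\ref{lemma: optimalprofiles}'' that $E_1(\tilde\rho_{r,k};\tilde Q_k/r)\geq E_1(\tilde\rho_{r,k};\R^3)-o(1)$ per cube. This is precisely the nontrivial direction: since $H_0^1(\tilde Q_k/r)\subset H_0^1(\R^3)$, one has $E_1(\rho;\tilde Q_k/r)\leq E_1(\rho;\R^3)$, so the deficit you need to control is nonnegative, and you must show that it is $o(1)$ \emph{uniformly in $k$ and in $r$}, with the sum over all cubes being $o(\alpha(r))$. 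Lemma~\ref{lemma: optimalprofiles} treats a single fixed Dirac $e_i\delta_0$ and gives no quantitative rate; here $\tilde\rho_{r,k}$ varies with $r$ and may carry mass comparable to $\alpha(r)$ concentrated in a single cube whose rescaled side length $\delta/r$ grows only as $\sqrt{\alpha(r)/r}$. Nothing you write rules out the deficits summing to a quantity comparable to $\alpha(r)$, so the step does not follow.

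The paper avoids the bounded/$\R^3$ domain comparison entirely. It defines the per--solute potentials $\psi_r^{i,j}$ as Green's-function solutions of $-\div(\tfrac{\eps(u_r)}{r}\nabla\cdot)=Q_r(e_i\delta_{x_r^{i,j}})$ \emph{on all of $\R^3$}, introduces the harmonic boundary correction $R_r$ with $R_r=-\sum\psi_r^{i,j}$ on $\partial\Omega$, and uses $\sum\psi_r^{i,j}+R_r\in H_0^1(\Omega)$ as a competitor in the maximization over $\psi$ (so the inequality
$E_r^{el}(\rho_r,u_r)\geq\int Q_r\rho_r(\sum\psi^{ij}+R_r)-\tfrac{\eps(u_r)}{2r}|\nabla(\sum\psi^{ij}+R_r)|^2$
holds by definition of the sup, not by any domain comparison). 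When the right-hand side is decomposed per cube, the diagonal terms are \emph{exactly} $E_r^{el}(\rho_r\llcorner Q_k,u_r;\R^3)$ because each $\psi_r^{i,j}$ already solves the $\R^3$ PDE. Two further features of the paper's argument, missing from yours, are needed to make this competitor admissible: restricting to the cubes $\mathcal G$ at distance $\geq\delta_r$ from $\partial\Omega$, which guarantees $\|R_r\|_{L^\infty}\lesssim r\alpha(r)/\delta_r\to 0$ via the maximum principle and the $r/|x-x^{ij}|$ decay; and replacing $u_r$ by $\tilde u_r^k$ (equal to $u_r$ in $\tilde Q_k^r$ and $1$ outside) so that $\tilde u_r^k\geq u_r$, whence $\eps(\tilde u_r^k)\geq\eps(u_r)$ and all cell-problem terms decrease, giving $E_r(\rho_r\llcorner Q_k,\tilde u_r^k;\R^3)\geq E_r(\rho_r\llcorner Q_k;\R^3)$ without any quantitative approximation lemma. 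If you replace your Dirichlet decomposition with this $\R^3$ decomposition plus boundary correction, the remainder of your argument carries through.
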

\begin{proof}
First, we may assume that $\liminf_{r \to 0} \frac{1}{\alpha(r)} E_{r}(\rho_r) = \lim_{r \to 0} E_{r}(\rho_r)$ and $\sup \frac{1}{\alpha(r)} E_{r}(\rho_r) < \infty$. 
Then, by the compactness result, we know that each component of $\rho$ is a nonnegative.
Moreover, it follows that $\rho_r \in \mathcal{A}_{r}(\Omega)$.
We write $\rho_r = \sum_{i=1}^N \sum_{j=1}^{m_r^i} e_i \delta_{x_r^{i,j}}$. 
By the convergence of $\frac{\rho_r}{\alpha(r)}$ it holds that $\sum_{i=1}^N m_r^i \leq C \alpha(r)$. 
Moreover, by Lemma \ref{minimax lemma} let $u_r: \Omega \rightarrow \{0,1\}$ such that $E_r(\rho_r) = E_r(\rho_r,u_r)$ which we extend by $1$ to $\R^3$.\\
Next, let let $\delta_r \searrow 0$ such that $\frac{\alpha(r) r}{\delta_r} \rightarrow 0$ and $\frac{\delta_r}{r} \to \infty$, and $L>1$ such that $U_{r,\rho_r}(x) \leq 0$ for all $x \notin B_{Lr}(\supp( \rho_r))$.
Applying Lemma \ref{lemma: cubes} for $L$ and $\delta_r$ yields the existence of disjoint cubes $(Q_k^r)_k$ of sidelength $\delta_r$ and cubes $\tilde{Q}_k^r$ of sidelength $\delta_r + 2Lr$ such that $\supp(\rho_r) \subseteq \bigcup_k Q_k^r$ and
\begin{enumerate}[label=(\roman*)]
\item $\sum_k \sum_{x^{i,j_r} \in Q_k, x^{i',j'}_r \notin Q_k} \frac{1}{|x^{i,j}-x^{i',j'}|} \leq C \frac{\alpha(r)^2}{\delta_r}$, \label{eq: lowerinteraction}
 \item $\sum_i \int_{(\tilde{Q}_i \setminus Q_i) \cap \Omega} r^{-2} \beta |Du_r| + r^{-3} \gamma (1-u_r) \, dx \leq C \frac{Lr}{\delta_r} \int_{\Omega} r^{-2} \beta |Du_r| + r^{-3} \gamma  (1-u_r) \, dx.$
\end{enumerate}
Define $\mathcal{G} = \{ Q_k^r: \operatorname{dist}(Q^r_k,\partial\Omega) \geq \delta_r\}$ and
\[ 
A_r = \bigcup_{Q_k^r \in \mathcal{G}} Q_k^r.
\]
Then, for $i,j$ such that $x_r^{i,j} \in A_r$ let $\psi_r^{i,j}$ be the solution to $-\operatorname{div }\left( \frac{\eps(u_r)}{r} \nabla \psi \right) = Q_r (e_i \delta_{x_r^{i,j}})$. 
By \cite{Littman1963} we can write 
\[
 \psi_r^{i,j}(y) = \int_{\R^3} Q_r (e_i \delta_{x_r^{i,j}})(y) K^{i,j}_r(x,y) \, dy,
\]
where $K_r^{i,j}$ is the fundamental solution for the translated $\eps(u_r)$ and satisfies $|K_r^{i,j}| \leq C \frac{r}{|x-x_r^{i,j}|}$ for a constant which does neither depend on $r$ nor $i,j$.
In particular, we find from the $L^{\infty}$-bounds and compact support of $\phi_i$ that 
\begin{equation} \label{eq: solupper}
|\psi_r^{i,j}(x)| \leq C r \int_{\R^3} r^{-3} |\phi_i|\left(\frac{y-x_r^{i,j}}{r^3}\right) \frac{1}{|x_r^{i,j}+x-y|} \, dy  \lesssim C r \fint_{B_{Lr}(x_r^{i,j})} \frac{1}{|x-y|} \,dy \lesssim C\frac{r}{|x-x_r^{i,j}|}.
\end{equation}
Using this bound, a similar argument shows that for all $x_r^{i,j}$ and $x_r^{l,m} \in A_r$ it holds that
\[
 \int_{\R^3} |Q_r (e_i \delta_{x_r^{i,j}})(y)| |\psi_r^{l,m}|(y) \, dy \lesssim \frac{r}{|x_r^{i,j} - x_r^{l,m}|}.
\]
Hence, we derive from \ref{eq: lowerinteraction} that
\begin{equation} \label{eq: upperboundinter}
 \sum_{Q_k^r} \sum_{x_r^{i,j} \in Q_k^r} \sum_{x_r^{l,m} \in A_r, x_r^{l,m} \notin Q_k^r} \int_{\R^3} |Q_r (e_i \delta_{x_r^{i,j}})(y)| |\psi_r^{l,m}|(y) \, dy \leq C \frac{\alpha(r)^2}{\delta_r} \rightarrow 0 \text{ as } r \to 0.
\end{equation}
We define 
\[
\psi^k_r \coloneqq \sum_{x_r^{i,j} \in Q_k^r} \psi_r^{i,j} \text{ and } \psi_r \coloneqq \sum_{Q_k^r \in \mathcal{G}} \psi_r^{k} \coloneqq \sum_{x_r^{i,j} \in A_r} \psi_r^{i,j} .  
\]
Then \eqref{eq: solupper} and the definition of $A_r$ yield that for all $x \in \partial{\Omega}$ it holds that $|\psi_r(x)| \leq C \frac{\alpha(r) r}{\delta_r} \rightarrow 0$ as $r\to 0$.
Hence, if we define $R_r \in H^1(\Omega)$ to be the solution to $-\operatorname{div}\left(\frac{\eps(u_r)}{r} \nabla R_r\right) = 0$ and $R_r(x) = - \psi_r(x)$ for all $x \in \partial \Omega$
it follows by the maximum principle that $R_r \rightarrow 0$ uniformly in $\Omega$. \\
Integration by parts shows for the electrical energy that
\begin{align*}
 E_r^{el}(\rho_r,u_r) \geq &\int_{\Omega} Q_r \rho_r (\psi_r + R_r) - \frac{\eps(u_r)}{2r} |\nabla (\psi_r + R_r)|^2 \, dx \\
 = &\frac12 \int_{\Omega} Q_r \rho_r (\psi_r + R_r)dx + \frac12 \int_{\Omega} \sum_{x^{i,j}_r \notin A_r} Q_r(e_i \delta_{x_r^{i,j}}) (\psi_r + R_r) \, dx \\
 = &\frac12 \int_{\Omega} (Q_r \rho_r + \sum_{x^{i,j}_r \notin A_r} Q_r(e_i \delta_{x_r^{i,j}})) R_r \, dx + 
 \frac12 \int_{\Omega} \sum_{Q_k^r \in \mathcal{G}} Q_r((\rho_r)_{|Q_k^r}) \psi_r^{k} \, dx \\
 &+ \frac12 \int_{\Omega} \sum_{Q_k^r \in \mathcal{G}} \sum_{x_r^{i,j} \in Q_k^r, x_r^{l,m} \notin Q_k^r} Q_r(e_i \delta_{x_r^{i,j}}) \psi_r^{l,m} \,dx \\
 &+ \frac12 \int_{\Omega} \sum_{Q_k^r} \sum_{x_r^{i,j} \in Q_k^r} \sum{x_r^{l,m} \notin Q_k^r, x_r^{l,m} \in A_r} Q_r(e_i \delta_{x_r^{i,j}}) \psi_r^{l,m}\, dx 
 \end{align*}
First note for the first term divided by $\alpha(r)$ goes to zero since the occurring measure divided by $\alpha(r)$ is bounded in total variation and $R_r$ goes to zero uniformly.
By \eqref{eq: upperboundinter} also the third and fourth term divided by $\alpha(r)$ converges to zero as $r\to0$.
Hence, 
\begin{align*}
 E_r^{el}(\rho_r,u_r) \geq &\frac12 \int_{\Omega} \sum_{Q_k^r \in \mathcal{G}} Q_r((\rho_r)_{|Q_k^r}) \psi_r^{k} \, dx + o(\alpha(r))  \\
 =  &\sum_{Q_k^r \in \mathcal{G}} \int_{\R^3} Q_r((\rho_r)_{|Q_k^r}) \psi_r^{k}  - \frac{\eps(u_r)}{2} |\nabla \psi_r^k|^2 \, dx + o(\alpha(r)) \\
 = &\sum_{Q_k^r \in \mathcal{G}} E^{el}_r((\rho_r)_{|Q_k^r},u_r;\R^3) + o(\alpha(r)).
\end{align*}
Assuming that $a>0$ is large enough such that the total variation of the measure outside $\bigcup_{Q_k^r \in \mathcal{G}} Q_k^r$ compensates the potential negativeness of the corresponding Lennard-Jones term this yields for the full energy
\begin{align}
 E(\rho_r) \geq   &o(\alpha(r)) + \sum_{Q_k^r \in \mathcal{G}} a |\rho_r|(Q_k^r) + r^{-3} \beta \int_{Q_k^r} (1-u_r) \,dx + r^{-2} \gamma |D u_r|(Q_k^r) + r^{-3} \int_{\R^3} U_{r,(\rho_r)_{|Q_k^r}}(x) u_r(x) \,dx \\
 &+E^{el}_r((\rho_r)_{|Q_k^r},u_r;\R^3).
\end{align}
Finally, let us fix a cube $Q_k^r \in \mathcal{G}$.
We define $\tilde{u}_r^k: \R^3 \to \{0,1\}$ by
\[
 \tilde{u}_r^k(x) = \begin{cases}
                     u_r(x) &\text{if } x \in \tilde{Q}_k^r, \\ 1 &\text{else},
                    \end{cases}
\]
where $\tilde{Q}_k^r$ is the cube as constructed above with sidelength $\delta_r + 2 Lr$ and the same center as $Q_k^r$ where $L>1$ was defined to be so that $U_{r,(\rho_r)_{|Q_k^r}} \leq 0$ outside 
$\tilde{Q}_k^r$.
We see immediately that
\begin{align*}
 &r^{-3} \beta \int_{\R^3} (1-\tilde{u}^k_r) \,dx + r^{-3} \int_{\R^3} U_{r,(\rho_r)_{|Q_k^r}}(x) \tilde{u}^k_r(x) \,dx +  E^{el}_r((\rho_r)_{|Q_k^r},\tilde{u}^k_r;\R^3) \\
 \leq &r^{-3} \beta \int_{\tilde{Q}^k_r} (1-u_r) \,dx + r^{-3} \int_{\R^3} U_{r,(\rho_r)_{|Q_k^r}}(x) u_r(x) \,dx +  E^{el}_r((\rho_r)_{|Q_k^r},u_r;\R^3).
\end{align*}
Moreover, we can argue as in \eqref{TV estimate modified} in the proof of the lower bound in the case where $B$ satisfies \ref{B1} and \ref{B2} that
\[
 |D\tilde{u}_r^k|(\R^3) \leq |D u_r|(\tilde{Q}_k^r) + \frac Cr \int_{\tilde{Q}_k^r \setminus Q_k^r} (1 - u_r) \, dx.
\]
Hence, we find since $\sum_{Q_k^r \in \mathcal{G}} |Du_r|(\tilde{Q}_k^r \setminus Q_k^r) + \int_{\tilde{Q}_k^r \setminus Q_k^r} (1-u_r) \, dx \lesssim \frac{Lr}{\delta_r} \alpha(r)$ that  
\begin{equation}\label{eq: lowerfinalnoB}
 E_r(\rho_r) \geq   o(\alpha(r)) + E_r((\rho_r)_{|Q_k^r}, \tilde{u}_r^k;\R^3) \geq o(\alpha(r)) + E_r((\rho_r)_{|Q_k^r};\R^3).
\end{equation}
Define $\tilde{\rho}_r = \sum_{Q_k^r \in \mathcal{G}} \rho(Q_k^r) \delta_{x_k^r}$ where $x_k^r$  is the center of $Q_k^r$.
Then also $\frac{\tilde{\rho}_r}{\alpha(r)} \weakstar \rho$ in $\mathcal{M}(\Omega;\R^N)$.
Moreover, by definition of the self-energy $\varphi$, Lemma \ref{lemma: subadditivity}, and Reshetnyak's theorem it follows from \eqref{eq: lowerfinalnoB} that 
\begin{align*}
\liminf_{r\to 0} \frac1{\alpha(r)} E_r(\rho_r) &\geq \liminf_{r\to0} \int_{\Omega} \varphi\left(\frac{d\tilde{\rho}_r}{d|\tilde{\rho}_r|} \right) \, d|\tilde{\rho}_r| + o(1). \\
&\geq \int_{\Omega} \varphi\left(\frac{d\rho}{d|\rho|} \right) \, d|\rho|.
\end{align*}

\end{proof}

Finally, we prove the existence of a recovery sequence for the energy $E^{sub}$.
The construction is very similar to the case with a superquadratic $B$ as the constructed approximating sequence is dilute enough to neglect the interactions of the different occurring electrical fields.

\begin{proposition}\label{prop: subupper}
 Let $r \to 0$ and $\alpha(r) \to \infty$ such that $\alpha(r) r \to 0$. 
Let $\rho \in \mathcal{M}(\Omega;\mathbb{R}^N)$. 
 Then there exists a sequence of measures $\{\rho_r\}_{r>0} \subseteq \mathcal{M}(\Omega;\mathbb{R}^N)$ such that $\frac{\rho_r}{\alpha(r)} \weakstar \rho$ in $\mathcal{M}(\Omega;\R^N)$ and
 \[
  \limsup_{r \to 0} \frac1{\alpha(r)}E_{r}(\rho_r) \leq \int_{\Omega} \varphi\left( \frac{d\rho}{d|\rho|}\right)  \,d|\rho|.
 \]
\end{proposition}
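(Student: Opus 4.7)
The plan is to adapt the recovery-sequence construction used for the $\Gamma$-$\limsup$ in Theorem~\ref{thm: GammaB}, replacing the exponential screening provided there by~\ref{B1}--\ref{B2} with the explicit Green's-function decay available for the linear Poisson operator in three dimensions. The starting observation is that finite sums of vector-valued Dirac masses $\rho = \sum_{i=1}^M p_i\delta_{x_i}$, with each $p_i$ having nonnegative components, are energy-dense: the Voronoi-based density argument and the Jensen-type estimate from the $\limsup$ proof of Theorem~\ref{thm: GammaB} rely only on subadditivity and positive $1$-homogeneity of $\varphi$, both established in Lemma~\ref{lemma: subadditivity}. Fix $\delta>0$ and choose cell-problem competitors $\rho_i\in \mathcal{A}_1$ and $z_i>0$ with $\rho_i(\R^3)/z_i\approx p_i$ and $E_1(\rho_i;\R^3)/z_i\le \varphi(p_i)+\delta/M$, together with near-optimal phase-fields $u_i:\R^3\to\{0,1\}$ that are identically $1$ outside a ball $B_R(0)$ (Lemma~\ref{lemma: optimalprofiles}).

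Place $N_i(r):=\lfloor\alpha(r)/z_i\rfloor$ translates of $\rho_i(\cdot/r)$ near each $x_i$ on a cubic sublattice of spacing $s(r)$, and define $u_r$ by gluing the translated $u_i(\cdot/r)$ on each molecule ball and extending by $1$. The three scaling requirements
\[
 s(r)/r\to\infty,\qquad N_i(r)^{1/3}s(r)\to 0,\qquad \alpha(r)^{5/3}\,r/s(r)=o(\alpha(r))
\]
admit a common choice, for instance $s(r)=r^{1/2}\alpha(r)^{1/6}$, precisely because $\alpha(r)r\to 0$. These conditions ensure that the rescaled molecules and their $u$-supports are pairwise disjoint, cluster vaguely around the prescribed points $x_i$, and that interactions of like translates within a single cluster are negligible. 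The local terms of $E_r(\rho_r,u_r)$ split additively over translates with only $o(\alpha(r))$ error from the tail of the Lennard-Jones potentials, exactly as in the earlier proof.

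For the electrical part, linearity when $B=0$ is essential. Decomposing $\rho_r=\sum_k\rho_{r,k}$ over individual translated molecules, the unique maximizer of $E^{el}_r(\rho_r,u_r)$ superposes as $\psi_r=\sum_k\psi_{r,k}$, where $-\operatorname{div}(\eps(u_r)r^{-1}\nabla \psi_{r,k}) = Q_r\rho_{r,k}$, giving
\[
 E^{el}_r(\rho_r,u_r) \;=\; \sum_k E^{el}_r(\rho_{r,k},u_r;\R^3) \;+\; \tfrac12 \sum_{k\ne l} \int Q_r\rho_{r,k}\,\psi_{r,l}\,dx.
\]
The diagonal terms agree with $E^{el}_1(\rho_i;\R^3)$ up to $o(1)$ per molecule: $u_r$ coincides with the translated $u_i$ on a neighbourhood of molecule $k$, and the regions where they differ sit at distance $\gtrsim s(r)$, where both $|\psi_{r,k}|\lesssim r/|x-x_k|$ and $|\nabla\psi_{r,k}|$ are small enough (by the estimate \eqref{eq: solupper} derived from \cite{Littman1963}) that a standard Green's-function perturbation argument absorbs the error. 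The off-diagonal terms are controlled by the same decay: pairs within the same cluster contribute $O(\alpha(r)^{5/3}r/s(r))=o(\alpha(r))$ by the choice of $s(r)$, while pairs in different clusters contribute $O(\alpha(r)^2 r)=o(\alpha(r))$ by the subcritical hypothesis $\alpha(r)r\to 0$.

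Combining the local and electrical estimates yields $E_r(\rho_r)/\alpha(r)\to\sum_{i=1}^M\varphi(p_i)+O(\delta)$, and a diagonal extraction in $\delta\to 0$ supplies the desired recovery sequence. The main obstacle, in contrast with the proof of Theorem~\ref{thm: GammaB}, is the absence of any exponential damping in the electric potential: the slow $1/|x|$-decay is only barely compatible with simultaneous $o(\alpha(r))$ control of the in-cluster self-interactions and the long-range cross-cluster interactions, forcing the intermediate window $r\alpha(r)^{2/3}\ll s(r)\ll\alpha(r)^{-1/3}$, which is non-empty precisely in the subcritical regime $\alpha(r)r\to 0$.
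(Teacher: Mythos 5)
Your construction is essentially the paper's own recovery-sequence argument: energy-density of finite Dirac sums via subadditivity and Jensen, a shrinking lattice of $N_i(r)=\lfloor\alpha(r)/z_i\rfloor$ rescaled near-optimal cell configurations with glued phase fields, superposition of the linear potentials, dyadic control of the in-cluster interactions (total $\sim \alpha(r)^{5/3}r/s(r)$) together with $O(\alpha(r)^2 r)$ cross-cluster terms, and a localization of the diagonal terms to the region where $u_r$ equals the cell pattern using the $r/|x|$ Green's-function decay; the paper's choice $s(r)=r^{1/3}$ lies in the same admissible window you identify, so the difference is only in the explicit spacing. One small caveat: \eqref{eq: solupper} (via \cite{Littman1963}) gives only a pointwise bound on $\psi_{r,k}$, not on $\nabla\psi_{r,k}$, so the diagonal-term comparison should be carried out as in the paper --- pick a good annulus by a pigeonhole argument on the Dirichlet energy and cut off there, using only the pointwise decay of $\psi_{r,k}$ --- rather than by invoking a gradient estimate.
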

\begin{proof}
We divide the proof in three steps. 
For simplicity, we first prove the existence of a recovery sequence for a single a Dirac measure. 
This construction is then easily used for sums of Dirac measures. 
Finally, we finish the proof by a classical energy density argument which is also recalled in the case $B\neq0$.\\
\begin{figure}
\begin{center}

\includegraphics{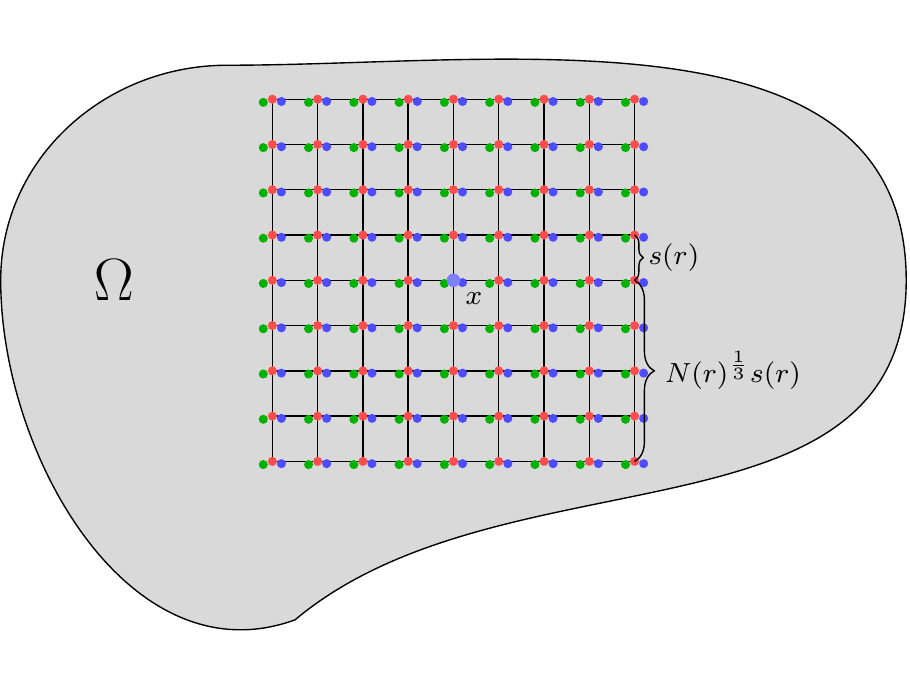}

\caption{Sketch of the construction for $\rho_r$. Each collection of red, blue, and green dots represents a translated rescaled optimal measure $\mu$.} 
\end{center}
\label{figure: upperboundsub}
\end{figure}
 \underline{\textbf{Step 1:}} $\rho = \xi \delta_x$ for some $\xi \in \R^N, \xi_i\geq 0$ for all $i=1,\dots,N$ and $x\in \Omega$. \\
 Let $\delta>0$.
First, by the definition of $\varphi$ there exists $\mu \in \mathcal{A}_1(\Omega)$ and $z >0$ such that 
\begin{align}
 \left|\frac{\mu(\mathbb{R}^3)}{z} - \xi \right| \leq \delta \text{ and } \frac{E_1(\mu)}{z} - \varphi(\xi) \leq \delta. 
\end{align}
Next, let $R>0$ such that $\mu$ is supported in $B_R(0)$ and it holds
\begin{align}
 \int_{\R^3 \setminus B_R(0)} U_{1,\mu} \,dx \geq - \delta. 
\end{align}
We define $N(r) = \lfloor \frac{\alpha(r)}{z}\rfloor$ and the lattice spacing $s(r) = r^{\frac13}$.
It follows that $\frac{s(r)}{r} \to \infty$ and $N(r) s(r)^3 \to 0$ as $r\to0$.
The competitor $\rho_r$ is then defined by
\[
 \rho_r = \sum_{l \in Z_{N(r)}} \rho_r^l = \sum_{l \in Z_{N(r)}} \mu\left( \frac{\cdot - s(r) l-x}{r} \right) 
\]
where $Z_{N} \subseteq \Z^3$ is a set of size N with $\max_{l \in Z_{N}} |l| \leq N^{\frac13}$. \\
Then, one can check that $\rho_r \in \mathcal{A}_r(\Omega)$ for $r>0$ small enough and as $r\to 0$ we find that $\frac{\rho_r}{ \alpha(r)} \weakstar \frac{\mu(\R^3)}{z}\, \delta_x$ which is close to $\rho$ in total variation.
Next, let $(u,\psi)$ be a minimax pair for $E_1(\mu;\R^3)$. 
Note that (by possibly enlarging $R$) we may assume that $u=1$ outside $B_R(0)$.
We define $u_r: \Omega \rightarrow \{0,1\}$ to be 
\[
 u_r(y) = \begin{cases} 
         u\left( \frac{y - s(r) l-x}{r} \right) &\text{ if } y \in B_{rR}(x + l s(r)) \text{ for some } l \in Z_{N(r)}, \\ 1 &\text{ else}.
        \end{cases}
\]
Using a similar argument as in the lower bound for $B \neq 0$, \eqref{TV estimate modified} for the total variation term, one shows for the local terms of the energy $E_r(\rho_r,u_r)$ that 
\begin{align}
&\frac1{\alpha(r)} \frac{1}{r^3}\int_{\Omega} u_r U_{r,\rho_r}\,dx + \frac{1}{r^2}\gamma |Du_r|(\Omega) + \frac{1}{r^3}\beta \int_{\Omega} (1-u_r)\,dx + a |\rho_r|(\Omega) \\
 \leq &\frac{N(r)}{\alpha(r)}\left( \int_{\R^3} u U_{1,\mu}\,dx + \gamma |Du|(\R^3) + \beta \int_{\R^3} (1-u)\,dx + a |\mu|(\R^3) + \delta + o(1) \right) \label{eq: upestlocal}
\end{align}
Note that we used that the Lennard-Jones term is non-positive outside the balls with radius $Rr$. \\
Next, we consider the electrical energy.
For $l \in Z_{N(r)}$ let $\psi^l_r$ be the unique solution to $-\operatorname{div } \frac{\eps(u_r)}{2 r} \nabla \psi^l_r = Q_r \mu\left( \frac{\cdot - s(r) l}{r} - x \right)$ in $\R^3$ such that $\psi^l \to 0$ as $|x| \to \infty$.
Here, we extend $u_r$ by $1$ outside $\Omega$.
Then $\sum_{l \in Z_{N(r)}} \psi^l_r$ solves $-\operatorname{div } \frac{\eps(u_r)}{2 r} \sum_{l \in Z_{N(r)}}\nabla \psi^l_r = Q_r \rho_r$. \\
Hence, the optimal $\psi_r$ for $E_1(\rho_r,u_r)$ is given by $\psi_r =  \sum_{l \in Z_{N(r)}} \psi^l_r + T_r$ where $T_r$ satisfies $-\operatorname{div } \frac{\eps(u_r)}{2 r} \nabla T_r = 0$
and corrects the boundary conditions appropriately.
From \cite{Littman1963}, we know that there exists a Green's function $K_r$ for $-\operatorname{div}_x \frac{\eps(u_r)}{r} K_r(x,y) = \delta_{y}$ satisfying $K_r(x,y) \leq C r|x-y|^{-1}$, where the constant does
not depend on $r$ (in fact the constant only depends on the maximal values of $\eps(u_r)$). 
Consequently, there exist constants $L>0,C>0$ such that for all $l \in Z_n$ we have $|\psi^l_r(x)| \leq C \frac{r}{|x|}$ for all $|x| \geq L r$.
Therefore, we find that for $x \in \partial \Omega$ it holds for $r$ small enough that $|T_r(x)| \leq 2C \alpha(r) \frac{r}{\operatorname{dist}(x,\partial \Omega)} \to 0$ as $r\to 0$.
By the maximum principle, it follows that $T_r \to 0$ uniformly in $\Omega$. \\
This proves for the electrical energy that 
\begin{align}
 E_r^{el}(\rho_r) &= \int_{\Omega} Q_r\rho_r \, \psi_r - \frac{\varepsilon(u_r)}{2r} |\nabla \psi_r|^2 \, dx \\
 &= \frac12 \int_{\Omega} Q_r \rho_r T_r \,dx + \frac12 \int_{\Omega} Q_r \rho_r \sum_{l \in Z_{N(r)}} \psi_i^l \,dx \\
 &= \frac12 \int_{\Omega} Q_r \rho_r \sum_{l \in Z_{N(r)}} \psi_i^l \,dx + o(\alpha(r)). 
\end{align}
Next, we show that the interaction between the different terms is negligible. Let $L>0$ and fix $y \in B_{Lr}(x + s(r)l)$ for some $l \in Z_{N(r)}$.
Then 
\begin{align}
 |\sum_{l' \neq l} \psi^l_r(y)| &= \sum_{k=1}^{\lceil (\log_2 N(r)^{\frac13}) \rceil} \sum_{ 2^{k-1} \leq |l - l'| < 2^k} |\psi^l_r(y)| \\
 &\lesssim \sum_{k=1}^{\lceil \log_2 N(r)^{\frac13}) \rceil} 2^{3k} \frac{r}{s(r) 2^{k-1}} \\
 &\lesssim \frac{r}{s(r)} N(r)^{\frac23} \approx (r \alpha(r))^{\frac23} \to 0 \text{ as } r\to0. 
\end{align}
As the measures $Q_r \rho_r^l$ are compactly supported on scale $r$, this implies 
\begin{align}
 E_r^{el}(\rho_r) &= \sum_{l \in Z_{N(r)}} \int_{\Omega} \frac12 Q_r \rho_r^l \psi^r_l + o(\alpha(r)) \\
 &=  \sum_{l \in Z_{N(r)}} \int_{\R^3} Q_r \rho_r^l \psi^r_l - \frac{\varepsilon(u_r)}{2r} |\nabla \psi_r^l|^2 \,dx + o(\alpha(r)) \label{eq: upestimate el} 
\end{align}
This is not yet the rescaled electrical part of the energy $E_1(\mu)$ since $u_r$ is only locally the translated and rescaled optimal $u$ for $\mu$.
In the following we show that we can localize the optimal $\psi$ to the region where $u_r$ equals the rescaled and translated optimal $u$ without creating too much energy. \\
First, notice that for fixed $l$ the function $u_r(y) = u \left( \frac{y - s(r)l-x}{r} \right)$ is simply the rescaled and translated optimal $u$ for the single measure $\mu$ in a ball with radius $\gamma_r \approx \frac{r^{\frac13}}{2}$ around $x+s(r)l$.\\
For each $\eta >0$ and $r$ small enough we can find a $k \in \{2,\dots, \lceil \eta \gamma_r \rceil \}$ such that 
\[
 \int_{ B_{k\eta^{-1}r}(x + s(r)l) \setminus B_{(k-1) \eta^{-1}r}(x + s(r)l)} \frac{\eps(u_r)}{2r} |\nabla \psi_l^r |^2 \, dx \leq C \eta r^{\frac13} \int_{\R^3} \frac{\varepsilon(u_r)}{2r} |\nabla \psi_l^r|^2 \,dx.
\]
If we let $\varphi_l = 1$ on $B_{(k-1) \eta^{-1}r}(x + s(r)l)$ and $\varphi_l = 0$ on $B_{k\eta^{-1}r}(x + s(r)l)$ such that $|\nabla \varphi_l| \leq \frac{2\eta}{r}$ 
then  
\begin{align*}
 &\int_{B_{k\eta^{-1}r}(x + s(r)l) \setminus B_{(k-1) \eta^{-1}r}(x + s(r)l)} \frac{\eps(u_r)}{2r} |\nabla (\psi_l^r \varphi_l) |^2 \, dx\\
 \leq &2 \int_{ B_{k\eta^{-1}r}(x + s(r)l) \setminus B_{(k-1) \eta^{-1}r}(x + s(r)l)} \frac{\eps(u_r)}{2r} |\nabla \psi_l^r |^2 + \frac{\eps(u_r)}{2r} |\psi_l^r|^2 \frac{4 \eta^2}{r^2}\, dx
\end{align*}
Using that $|\psi_l^r| \lesssim \frac{r}{|y-x - s(r)l|}$ we derive that
\[
 \int_{ B_{k\eta^{-1}r}(x + s(r)l) \setminus B_{(k-1) \eta^{-1}r}(x + s(r)l)} \frac{\eps(u_r)}{2r} |\nabla (\psi_l^r \varphi_l) |^2 \, dx \leq \eta r^{\frac13} \int_{\R^3} \frac{\varepsilon(u_r)}{2r} |\nabla \psi_r^l|^2 \,dx + C\eta.
\]
As $\psi_l^r \varphi_l$ is now supported in $B_{\gamma_r}(x+s(r)l)$ in which $u_r$ just equals the translated and rescaled optimal $u$ for $\mu$, we find that for $r$ small enough
\begin{align*}
 \int_{\R^3} Q_r \rho_r^l \psi^r_l - \frac{\varepsilon(u_r)}{2r} |\nabla \psi_r^l|^2 \,dx \leq \frac{1}{1 - \eta r^{\frac13}} E_1(\mu,u;B_{\frac{\gamma_r}{r}}(0)) + \eta \leq \frac{1}{1 - \eta r^{\frac13}} E_1(\mu,u;\R^3) + \eta.
\end{align*}
For the last inequality simply notice that a competitor on $B_{\frac{\gamma_r}{r}}(0)$ can always be extended by zero to a competitor on $\R^3$.
Hence, we observe that combining \eqref{eq: upestlocal}, \eqref{eq: upestimate el}, and noticing that $E_1(\mu,u;\R^3) = E_1(\mu;\R^3)$ by definition of $u$ yields
\[
\limsup_{r \to 0} \frac1{\alpha(r)} E_r(\rho_r) \leq \lim_{r\to 0} \frac{N(r)}{\alpha(r)} (E_1(\mu) + \eta + \delta) = \frac1z (E_1(\mu) +\eta+ \delta) \leq \varphi(\xi) + (1+1/ z)(\delta + \eta).
\]
As $\mu$ is close to $\rho$ in total variation, we can find a diagonal sequence satisfying the desired $\limsup$-inequality.

 \underline{\textbf{Step 2:}} $\rho = \sum_{i=1}^L \xi_i \delta_{x_i}$ for some $\xi \in \R^N, x_i\in \Omega$. \\
Using the approximating sequences for each Dirac mass, one can show similarly to step 1 that the interaction between the different sequences is negligible in the limit as $r\to0$.
 
 \underline{\textbf{Step 3:}} The general case $\rho \in \mathcal{M}(\Omega;\R^N)$. \\
The general case follows as weighted sums of Dirac masses are energy-dense in $\mathcal{M}(\Omega;\R^N)$, see also the proof of the upper bound in the case $B \neq 0$.
\end{proof}

\subsection{The Critical Regime}\label{sec: criticalB}

In this section we assume that $\alpha(r) r \to \alpha \in (0,\infty)$.
Here, we need the further technical assumption of well-separateness of solutes to prove the $\Gamma$-convergence result.
We define the admissible solute distributions by
\[
 \tilde{\mathcal{A}}_r(\Omega) = \left\{ \sum_{i=1}^N \sum_{j=1}^{m_i} e_i \delta_{x^{i,j}} \in \mathcal{A}_r: |x^{i,j} - x^{i',j'}| \geq 2\delta_r, \dist(x^{i,j},\partial \Omega) \geq \delta_r \right\},
\]
where $\delta_r \to 0$ such that $\frac{\delta_r}{r} \to \infty$ and $\delta_r^3 \alpha(r) \to 0$.\\
We then define the energy $\tilde{E}_r: \mathcal{M}(\Omega;\R^N) \rightarrow \R \cup \{\infty\}$ by
\[
 \tilde{E}_r(\rho) = \begin{cases}
                E_r(\rho) &\text{if } \rho \in \tilde{\mathcal{A}}_r(\Omega), \\
                +\infty &\text{else}.
               \end{cases}
\]

In the following we show Theorem \ref{thm: Bcrit}, i.e., we prove that the rescaled energy $\frac{\tilde{E}_{r}}{\alpha(r)}$ $\Gamma$-converges with respect to vague convergence of $\frac{\rho_r}{\alpha(r)}$ in $\mathcal{M}(\Omega;\R^N)$ to the energy $E^{crit}: \mathcal{M}(\Omega,\R^N) \rightarrow [0,\infty]$ defined by 
\[
E^{crit}(\rho) = \begin{cases} \frac{\alpha}{2 \eps(1)} \left\| Q_0 \rho \right\|^2_{H^{-1}} + \sum_{i=1}^N E_1(e_i \delta_0;\R^3) |\rho^i|(\Omega) &\text{ if } Q_0 \rho \in H^{-1}(\Omega;\R^M)  \text{ and } \rho \text{ is a } \\&\text{ nonnegative measure in each component}, \\ +\infty &\text{ else.}\end{cases}
\]
The proof will be given in the Propositions \ref{prop: critlower} and \ref{prop: critupper}.\\
Again, we start with the compactness result.
\begin{proposition}[Compactness]\label{prop: compcrit}
Let $\alpha(r) \to \infty$ such that $r \alpha(r) \to \alpha \in (0,\infty)$.
Moreover, let $\{\rho_r\}_{r>0} \subseteq \mathcal{M}(\Omega;R^N)$ such that $\sup_r \frac1{\alpha(r)} E_{r}(\rho_r) \leq C < \infty$.
If $E_1(e_i \delta_0;\R^3)>0$ for all $i=1,\dots,N$ then there exists a (not relabeled) subsequence and $\rho \in \mathcal{M}(\Omega)$ such that
\[
\frac{\rho_r}{\alpha(r)} \weakstar \rho \text{ in } \mathcal{M}(\Omega;\R^N) \text{ and }  \frac{Q_r\rho_r}{\alpha(r)} \rightharpoonup Q_0 \rho \text{ in } H^{-1}(\Omega).
\]
\end{proposition}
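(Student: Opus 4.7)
The plan is to extract the two claimed convergences separately and then match them by duality. The total-variation bound on $\rho_r/\alpha(r)$ gives vague precompactness, while a Dirichlet-energy bound on the maximizer $\psi_r$ of $E^{el}_r$ delivers the $H^{-1}$-bound on $Q_r\rho_r/\alpha(r)$. Finally the two limits are identified by testing against smooth functions, exploiting the fact that the charge clouds $\phi_i(\cdot/r)/r^3$ concentrate on the solute locations.

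\textbf{Step 1 (total variation).} I would first show $|\rho_r|(\Omega)\leq C\alpha(r)$. Using the well-separateness assumption built into $\tilde{\mathcal{A}}_r$, for each solute $x^{i,j}$ the ball $B_{\delta_r}(x^{i,j})$ is disjoint from all other solutes and, after choosing $L$ so that $U^i_{LJ}\leq 0$ outside $B_{LR}$, the optimal $u_r$ may be replaced by $1$ outside $B_{\delta_r/2}(x^{i,j})$ at no extra cost. Cutting off the maximizer $\psi_r$ in an annulus of thickness $\sim Lr$ around each $x^{i,j}$, just as in the proof of Lemma \ref{lemma: optimalprofiles}, produces a competitor for the single-solute cell problem $E_r(e_i\delta_{x^{i,j}};\R^3)=E_1(e_i\delta_0;\R^3)$ on each $B_{\delta_r}(x^{i,j})$, with a loss that vanishes because $\delta_r/r\to\infty$. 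Summing over the $(i,j)$'s gives $E_r(\rho_r)\geq \sum_{i,j} E_1(e_i\delta_0;\R^3)(1-o(1))$, and since $E_1(e_i\delta_0;\R^3)>0$ this forces $|\rho_r^i|(\Omega)\leq C\alpha(r)$. Vague precompactness of $\rho_r/\alpha(r)$ then follows from Banach–Alaoglu, and the limit $\rho$ is a nonnegative measure in each component because each $\rho_r^i$ is.

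\textbf{Step 2 ($H^{-1}$ bound).} Since $B=0$, the unique maximizer $\psi_r\in H_0^1(\Omega)$ of $E^{el}_r(\rho_r,u_r)$ solves $-\div(\eps(u_r)r^{-1}\nabla\psi_r)=Q_r\rho_r$, so $E^{el}_r(\rho_r,u_r)=\tfrac12\int_\Omega \eps(u_r)r^{-1}|\nabla\psi_r|^2\,dx\leq C\alpha(r)$. Setting $F_r\coloneqq \eps(u_r)r^{-1}\nabla\psi_r$, we find $\|F_r\|_{L^2(\Omega)}^2 \leq 2C\eps(1)\alpha(r)/r$. Because $r\alpha(r)\to\alpha$, the rescaled flux satisfies $\|F_r/\alpha(r)\|_{L^2(\Omega)}^2\leq 2C\eps(1)/(r\alpha(r))\to 2C\eps(1)/\alpha$. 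Since $Q_r\rho_r/\alpha(r)=-\div(F_r/\alpha(r))$ in $\Omega$ and $F_r/\alpha(r)$ is bounded in $L^2$, along a subsequence $F_r/\alpha(r)\rightharpoonup F$ in $L^2(\Omega;\R^3)$ and therefore $Q_r\rho_r/\alpha(r)\rightharpoonup \mu\coloneqq -\div F$ in $H^{-1}(\Omega)$.

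\textbf{Step 3 (identification).} To see $\mu=Q_0\rho$, I would test against $\phi\in C_c^\infty(\Omega)$: writing out $Q_r\rho_r$ and substituting $y=(x-x^{i,j})/r$, we have $\langle Q_r\rho_r,\phi\rangle=\sum_{i,j}\int_{\R^3}\phi_i(y)\phi(x^{i,j}+ry)\,dy$. The uniform continuity of $\phi$ and compact support of $\phi_i$ give $|\phi(x^{i,j}+ry)-\phi(x^{i,j})|\leq \omega_\phi(r)$ on $\supp\phi_i$, so
\[
\left|\langle Q_r\rho_r/\alpha(r),\phi\rangle - \langle Q_0(\rho_r/\alpha(r)),\phi\rangle\right|\leq \omega_\phi(r)\,\max_i\|\phi_i\|_{L^1}\,|\rho_r|(\Omega)/\alpha(r) \xrightarrow{r\to 0}0.
\]
By Step 1, $Q_0(\rho_r/\alpha(r))\weakstar Q_0\rho$ vaguely, so the distributional limit of $Q_r\rho_r/\alpha(r)$ coincides with $Q_0\rho$; combined with the $H^{-1}$ convergence of Step 2 this yields $\mu=Q_0\rho$.

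The main obstacle is Step 1: the slow decay $|\psi_r|\lesssim r/|\cdot-x^{i,j}|$ of the single-solute field means that naively summing contributions would pick up uncontrolled interaction terms, and one must use $\delta_r/r\to\infty$ together with an annular cut-off (losing only $o(1)$ per solute) to decouple the self-energies. Once this lower bound is in hand, the remaining steps are standard compactness arguments.
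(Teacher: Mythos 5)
Your Steps 2 and 3 are sound, but they both rest on the total-variation bound of Step 1, and that is exactly where the proposal has a genuine gap. Step 1 is the place where the hypothesis $E_1(e_i\delta_0;\R^3)>0$ must be turned into $|\rho_r|(\Omega)\le C\alpha(r)$, and your route does not deliver it: you localize the energy of the optimal pair $(u_r,\psi_r)$ by truncating the maximizer $\psi_r$ near each solute and call the result a ``competitor for the single-solute cell problem''. But the cell energy is a min--max. For the unknown minimizer $u_r$ and a truncation $\psi_r\eta_{i,j}$ one only has $E_r\bigl(e_i\delta_{x^{i,j}},u_r,\psi_r\eta_{i,j};B\bigr)\le \max_\psi E_r\bigl(e_i\delta_{x^{i,j}},u_r,\psi;B\bigr)$, and it is the right-hand side, not the left, that dominates the cell value $E_1(e_i\delta_0;\R^3)$; the value at a particular pair bounds the min--max from neither side, so ``summing over $(i,j)$'' does not give $E_r(\rho_r)\ge\sum_{i,j}E_1(e_i\delta_0;\R^3)(1-o(1))$. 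Moreover, when you split $E_r(\rho_r,u_r,\psi_r)$ into the balls plus the rest, the rest contains the negative far-field Dirichlet energy $-\frac{\eps(u_r)}{2r}|\nabla\psi_r|^2$ and the inter-solute cross terms; these are generically of order $r\,|\rho_r|(\Omega)^2$, which in the critical scaling is comparable to $\alpha(r)$ even when $|\rho_r|(\Omega)\sim\alpha(r)$ (this is precisely the $H^{-1}$ interaction that survives in the limit) and larger otherwise, so they cannot be declared $o(\alpha(r))$ before the total-variation bound is known --- the argument is circular. Finally, resetting $u_r$ to $1$ outside $B_{\delta_r/2}(x^{i,j})$ is not ``at no extra cost'': it creates a jump set whose perimeter must be controlled by the trace/Fubini cut-and-paste used elsewhere in the paper, and it changes $\eps(u_r)$ in the electrical term.

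The paper closes this gap with the saddle structure rather than the maximizer: by Lemma \ref{minimax lemma} (min--max equals max--min) and Lemma \ref{lemma: optimalprofiles}, for each species $i$ one fixes a compactly supported profile $\psi^i\in H_0^1(B_L(0))$ with $\inf_u E_1(e_i\delta_0,u,\psi^i;B_L(0))=E_1(e_i\delta_0;B_L(0))\ge E_1(e_i\delta_0;\R^3)-\eta$, rescales and translates it to each solute, and inserts the sum $\sum_{i,j}\psi^i\bigl(\tfrac{\cdot-x_r^{i,j}}{r}\bigr)$ into the supremum over $\psi$ defining $E_r(\rho_r,u_r)$. Well-separateness ($|x^{i,j}-x^{i',j'}|\ge 2\delta_r\gg r$) makes these supports pairwise disjoint, so all interaction terms vanish identically, and one gets $E_r(\rho_r)\ge\sum_i\bigl(E_1(e_i\delta_0;\R^3)-2\eta\bigr)|\rho_r^i|(\Omega)$ for whatever $u_r$ is; this is the total-variation bound. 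You should replace your Step 1 by this argument. With that in hand, your Step 2 (Dirichlet-energy bound on the flux, giving $\|Q_r\rho_r\|_{H^{-1}}^2\lesssim\alpha(r)/r\lesssim\alpha(r)^2$; the paper tests instead with the solution of $-\frac{\eps(1)}{r}\Delta\psi_r=Q_r\rho_r$, which is equivalent) and your Step 3 (identification of the weak limit with $Q_0\rho$ by testing with smooth compactly supported functions) are correct and essentially coincide with the paper's proof.
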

\begin{proof}
As $\sup_r \frac1{\alpha(r)} E_{r}(\rho_r) \leq C$, clearly $\rho_r \in \tilde{\mathcal{A}}_{r}(\Omega)$ and hence $\tilde{E}_r(\rho_r) = E_r(\rho_r)$.
We write $\rho_r = \sum_{i=1}^{N} \sum_{j=1}^{m_r^i} e_i \delta_{x_r^{i,j}}$. \\
Fix $0<\eta <\frac13 \min_i E_1(e_i \delta_0;B_L(0))>0$ and, by Lemma \ref{lemma: optimalprofiles}, let $L>0$ such that $E_1(e_i \delta_0;B_L(0)) \geq E_1(e_i \delta_0;\R^3) - \eta$ and $\int_{\mathbb{R}^3\setminus B_L(0)} \1_{\{U_i < 0\}} U_i(x) ,dx \geq -\eta$.
Moreover, let $\psi^i \in H_0^1(B_L(0))$ such that 
\[
E_1(e_i \delta_0;B_L(0)) = \inf_u E_1(e_i \delta_0 u,\psi^i;B_L(0))                                                    
\]
which we extend by $0$ to $\R^3$.
We define $\psi_r^{i,j}(x) = \psi^i\left( \frac{x - x_r^{i,j}}{r} \right)$ and $\psi_r = \sum_{i,j} \psi_r^{i,j}$.
For $u_r:\Omega \rightarrow \{0,1\}$ such that $E_r(\rho_r) = E_r(\rho,u_r)$ we find
\begin{align}
E_{r}(\rho_r) &\geq \int_{\Omega} \geq \sum_{i=1}^{N}\sum_{j=1}^{m_r^i} E_r(e_i \delta_{x_r^{i,j}},u_r,\psi_r^{i,j};B_{Lr}(x_r^{i,j})) - \int_{\R^3\setminus B_L(\supp(\rho_r))} U_{\rho_r,1}^- (x) \, dx \\
&\geq  \sum_{i=1}^{N}|\rho_r^i|(\Omega) \left(E_r(e_i \delta_{x_r^{i,j}};B_{Lr}(x_r^{i,j}))-\eta \right) \\
&= \sum_{i=1}^{N}|\rho_r^i|(\Omega) \left(E_1(e_i \delta_{0};B_{L}(0))-\eta \right) \\
&\geq \sum_{i=1}^{N}|\rho_r^i|(\Omega) \left(E_1(e_i \delta_{0};\R^3)-2\eta \right) \\
&\geq c \sum_{i=1}^{N} |\rho_r^i|(\Omega)
\end{align}
Hence, $\frac{\rho_r}{\alpha(r)}$ is bounded in $\mathcal{M}(\Omega;\R^N)$. \\
On the other hand, for $\psi_r \in H_0^1(\Omega)$ solving $\frac{-\eps(1)}{r} \Delta \psi_r = Q_r \rho_r$ and an optimal $u_r$ for $\rho_r$ we find that
\begin{align*}
 E_r(\rho_r) &\geq -\int_{\R^3} U_{\rho_r,1}^- (x) \,dx + \int_{\Omega} Q_r \rho_r - \frac{\eps(u_r)}{2r} |\nabla \psi_r|^2 \, dx \\
 &\geq - C \alpha(r) + \int_{\Omega} Q_r \rho_r - \frac{\eps(1)}{2r} |\nabla \psi_r|^2 \, dx \\\
 &\geq - C \alpha(r) + \frac{r}{2 \eps(1)} \| Q_r \rho_r\|_{H^{-1}}^2.
\end{align*}
Hence, $\| Q_r \rho_r\|_{H^{-1}}^2 \leq C \frac{\alpha(r)}{r} \leq \tilde{C} \alpha(r)^2$.
Therefore, there exists a (not relabeled) subsequence such that $\frac{\rho_r}{\alpha(r)}$ converges vaguely in $\mathcal{M}(\Omega;\R^N)$ to some $\rho \in \mathcal{M}(\Omega;\R^N)$ ---which is clearly nonnegative---
and $\frac{Q_r \rho_r}{\alpha(r)}$ converges weakly in $H^{-1}(\Omega)$ to some $f \in H^{-1}(\Omega)$ .
Testing shows $f = Q_0 \rho$. 
\end{proof}

\begin{proposition}\label{prop: critlower}
Let $\Omega \subseteq \R^3$ open bounded with Lipschitz boundary and $\alpha(r) \to \infty$ such that $r \alpha(r) \to \alpha \in (0,\infty)$. 
Moreover, let $\rho \in \mathcal(\Omega)$ and $(\rho_r)_r \subseteq \mathcal{M}(\Omega)$. 
Assume that $\frac{\rho_r}{\alpha(r)} \weakstar \rho$ in $\mathcal{M}(\Omega;\R^N)$.
Then
\[
\liminf_{n \to \infty} \frac1{\alpha(r)} E_{r}(\rho_r) \geq \frac{\alpha}{2\varepsilon(1)} \| \rho \|_{H^{-1}(\Omega)}^2 + \sum_{k=1}^N E_1(e_i \delta_0;\R^3) |\rho^i|(\Omega).
\] 
\end{proposition}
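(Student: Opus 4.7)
The plan is to prove the $\liminf$ inequality by separating two contributions of the electrical energy: a local self-energy around each solute, which will produce the $\sum_i E_1(e_i\delta_0;\R^3)|\rho^i|(\Omega)$ term, and a long-range Coulombic contribution, which will produce the $H^{-1}$ term. The well-separation hypothesis $|x_r^{i,j}-x_r^{i',j'}|\geq 2\delta_r$ with $\delta_r/r\to\infty$ is essential since it guarantees that, for any fixed $L>0$, the balls $B_{Lr}(x_r^{i,j})\subset\Omega$ are pairwise disjoint for $r$ small.

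First I would assume (otherwise the statement is trivial) that $\liminf E_r(\rho_r)/\alpha(r)<\infty$, pass to a subsequence using Proposition~\ref{prop: compcrit} so that $\rho_r/\alpha(r)\weakstar\rho$ and $Q_r\rho_r/\alpha(r)\rightharpoonup Q_0\rho$ in $H^{-1}(\Omega)$, fix $\eta>0$, and use Lemma~\ref{lemma: optimalprofiles} to choose $L$ so large that $E_1(e_i\delta_0;B_L(0))\geq E_1(e_i\delta_0;\R^3)-\eta$ and the Lennard--Jones tails $\int_{\R^3\setminus B_L(0)} (U_{LJ}^i)^-$ are $\leq\eta$. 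Let $u_r$ be an optimal profile for $E_r(\rho_r)$. For arbitrary smooth $v\in C_c^\infty(\Omega)$, I would use the two-scale competitor
\[
\psi^{\text{test}}_r \coloneqq \sum_{i,j}\phi_r^{i,j}+v,
\]
where $\phi_r^{i,j}\in H_0^1(B_{Lr}(x_r^{i,j}))$ is the Dirichlet maximizer of $\int Q_r(e_i\delta_{x_r^{i,j}})\psi - \int\frac{\eps(u_r)}{2r}|\nabla\psi|^2$, extended by $0$. Using the disjointness of supports, the Euler--Lagrange identity $\int Q_r(e_i\delta_{x_r^{i,j}})\phi_r^{i,j}=\int\frac{\eps(u_r)}{r}|\nabla\phi_r^{i,j}|^2$, and Green's formula $\int\frac{\eps(u_r)}{r}\nabla\phi_r^{i,j}\cdot\nabla v = \int_{B_{Lr}}Q_r(e_i\delta_{x_r^{i,j}})v + \int_{\partial B_{Lr}} v\frac{\eps(u_r)}{r}\partial_\nu\phi_r^{i,j}$, the inequality $E_r^{el}(\rho_r,u_r)\geq \int Q_r\rho_r\psi^{\text{test}}_r - \int\frac{\eps(u_r)}{2r}|\nabla\psi^{\text{test}}_r|^2$ reduces to
\[
E_r^{el}(\rho_r,u_r)\geq \sum_{i,j} E_r^{el}(e_i\delta_{x_r^{i,j}},u_r;B_{Lr}(x_r^{i,j})) - \int\frac{\eps(u_r)}{2r}|\nabla v|^2 - \sum_{i,j}\int_{\partial B_{Lr}(x_r^{i,j})} v\frac{\eps(u_r)}{r}\partial_\nu\phi_r^{i,j}.
\]
Splitting $v=v(x_r^{i,j})+(v-v(x_r^{i,j}))$ on each sphere and using $\int_{\partial B_{Lr}}\frac{\eps(u_r)}{r}\partial_\nu\phi_r^{i,j}=-q_i$ with $q_i=\int\phi_i$, the principal part of the surface integral equals $-\int v\,dQ_0\rho_r$.

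Adding the local (non-electrical) terms, which on each $B_{Lr}(x_r^{i,j})$ satisfy $\text{local}(e_i\delta_{x_r^{i,j}},u_r;B_{Lr}) + E_r^{el}(e_i\delta_{x_r^{i,j}},u_r;B_{Lr}) \geq \inf_u E_r(e_i\delta_{x_r^{i,j}},u;B_{Lr}) = E_1(e_i\delta_0;B_L(0))$ by infimization over $u$ followed by rescaling, while the contribution outside $\bigcup_{i,j} B_{Lr}(x_r^{i,j})$ is at worst $-C\eta\alpha(r)$ since $a|\rho_r|$ absorbs the Lennard--Jones tails, dividing by $\alpha(r)$ and taking $\liminf_{r\to 0}$ using $r\alpha(r)\to\alpha$, the vague lower semicontinuity of mass, and the weak convergence $Q_r\rho_r/\alpha(r)\rightharpoonup Q_0\rho$ in $H^{-1}$, I would obtain
\[
\liminf_{r\to 0}\frac{E_r(\rho_r)}{\alpha(r)} \geq \sum_i |\rho^i|(\Omega)\bigl(E_1(e_i\delta_0;\R^3)-\eta\bigr) + \int v\,dQ_0\rho - \frac{\eps(1)}{2\alpha}\|\nabla v\|_{L^2}^2 - C\eta.
\]
Taking the supremum over $v\in C_c^\infty(\Omega)$ yields the long-range term $\frac{\alpha}{2\eps(1)}\|Q_0\rho\|_{H^{-1}(\Omega)}^2$, and letting $\eta\to 0$ (with $L\to\infty$) concludes.

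The main technical obstacle is to control the surface remainder $\sum_{i,j}\int_{\partial B_{Lr}}(v-v(x_r^{i,j}))\frac{\eps(u_r)}{r}\partial_\nu\phi_r^{i,j}$. One more integration by parts rewrites it as $-\sum_{i,j}\int_{B_{Lr}}(v-v(x_r^{i,j}))Q_r(e_i\delta_{x_r^{i,j}}) + \sum_{i,j}\int_{B_{Lr}}\frac{\eps(u_r)}{r}\nabla v\cdot\nabla\phi_r^{i,j}$. The first sum is bounded by $CRr\|\nabla v\|_\infty \sum_i q_i m_r^i = O(r\alpha(r))=O(\alpha)$. For the second, rescaling $\phi_r^{i,j}(x)=r\tilde\phi_r^{i,j}((x-x_r^{i,j})/r)$ together with the uniform elliptic estimate $\|\nabla\tilde\phi_r^{i,j}\|_{L^2(B_L)}\leq C(L)$ gives $\|\nabla\phi_r^{i,j}\|_{L^2(B_{Lr})}\leq C(L)\sqrt r$, so $\int_{B_{Lr}}|\nabla\phi_r^{i,j}|\leq CL^{3/2}r^2$ by Cauchy--Schwarz, and the total error is $O(L^{3/2}r\alpha(r)\|\nabla v\|_\infty)=O(L^{3/2}\alpha)$. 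In both cases the bound is independent of $r$, hence $o(\alpha(r))$ after division. A parallel estimate, relying on $\delta_r/r\to\infty$ and the integrability of $U^i_{LJ}$ outside any ball, is needed to ensure that the cross Lennard--Jones contributions of distant solutes on each $B_{Lr}(x_r^{i,j})$ aggregate to a vanishing fraction of $\alpha(r)$, so that the single-solute local energies are indeed recovered.
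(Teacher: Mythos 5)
Your argument is correct and reaches the same skeleton as the paper's proof --- single-solute cell energies on the disjoint balls $B_{Lr}(x_r^{i,j})$ plus a macroscopic test potential whose cross terms with the local potentials are negligible --- but the technical implementation is genuinely different in three places. First, for the local part you take the $u_r$-dependent Dirichlet maximizers $\phi_r^{i,j}\in H^1_0(B_{Lr}(x_r^{i,j}))$ and then simply bound below by $\inf_u E_r(e_i\delta_{x_r^{i,j}},u;B_{Lr})=E_1(e_i\delta_0;B_L(0))$, whereas the paper inserts the fixed rescaled saddle profiles $\psi^i$ of the cell problem (so that $E_1(e_i\delta_0;B_L(0))=\inf_u E_1(e_i\delta_0,u,\psi^i;B_L(0))$ does the same job); both are legitimate, yours trades the minimax characterization for solving one elliptic problem per solute. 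Second, you keep an arbitrary $v\in C_c^\infty(\Omega)$ and dualize at the end, $\sup_v\bigl[\langle Q_0\rho,v\rangle-\tfrac{\eps(1)}{2\alpha}\|\nabla v\|_{L^2}^2\bigr]=\tfrac{\alpha}{2\eps(1)}\|Q_0\rho\|_{H^{-1}}^2$, while the paper plugs in $r\alpha(r)R$ with $-\eps(1)\Delta R=Q_0\rho$ directly; equivalent, and your version needs only the weak convergence of $Q_r\rho_r/\alpha(r)$ tested against smooth functions. Third, and most substantively, you kill the cross term $\sum_{i,j}\int\frac{\eps(u_r)}{r}\nabla\phi_r^{i,j}\cdot\nabla v$ by a quantitative estimate: the scaling $\|\nabla\phi_r^{i,j}\|_{L^2(B_{Lr})}\leq C(L)\sqrt r$ plus Cauchy--Schwarz gives a total of order $r\alpha(r)\to\alpha$, hence $o(\alpha(r))$ after division --- in fact this direct bound already suffices, so the monopole-cancellation bookkeeping with the surface fluxes is redundant (though harmless). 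The paper instead argues qualitatively that $\sum_{i,j}\psi_r^{i,j}$ is bounded in $H^1_0$ and converges weakly to $0$, and pairs it with $\eps(u_r)\nabla R$; your quantitative route avoids that weak-convergence argument. Minor slips to fix: the rescaling should read $\phi_r^{i,j}(x)=\tilde\phi_r^{i,j}\bigl((x-x_r^{i,j})/r\bigr)$ without the prefactor $r$ (your stated norm bound $C(L)\sqrt r$ is the one consistent with the correct scaling); in the bound on $\sum_{i,j}\int_{B_{Lr}}(v-v(x_r^{i,j}))\,Q_r(e_i\delta_{x_r^{i,j}})$ the constant should involve $\|\phi_i\|_{L^1}$ rather than $q_i=\int\phi_i$, which may vanish; and the final passage from $\sum_i \frac{m_r^i}{\alpha(r)}\bigl(E_1(e_i\delta_0;\R^3)-\eta\bigr)$ to $\sum_i|\rho^i|(\Omega)\bigl(E_1(e_i\delta_0;\R^3)-\eta\bigr)$ uses lower semicontinuity of mass under vague convergence and therefore needs $E_1(e_i\delta_0;\R^3)>0$ (so the coefficient is nonnegative for small $\eta$) --- the same standing hypothesis as in Theorem \ref{thm: Bcrit} and Proposition \ref{prop: compcrit}, which you should state explicitly since the proposition as written does not repeat it.
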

\begin{proof}
We may assume that $\liminf_{r \to 0} \frac1{\alpha(r)} E_{r}(\rho_r) = \lim_{r \to 0} \frac1{\alpha(r)} E_{r}(\rho_r)$ and $\sup \frac1{\alpha(r)} E_{r}(\rho_r) < \infty$.
Then by the compactness statement we derive (for a not-relabeled subsequence) the weak convergence $\frac{Q_r \rho_r}{\alpha(r)} \rightharpoonup Q_0 \rho$ in $H^{-1}(\Omega)$.

Moreover, $\rho_r \in \tilde{\mathcal{A}}_{r}(\Omega)$. 
We write $\rho_r = \sum_{i=1}^{N} \sum_{j=1}^{m_r^i} e_i \delta_{x_r^{i,j}}$.
Let $u_r: \Omega \rightarrow \{0,1\}$ be such that $E_{r}(\rho_r) = E_{r}(\rho_r,u_r)$. \\
Fix $\eta >0$.
By Lemma \ref{lemma: optimalprofiles}, there exists $L > 1$ such that for all  $i\in\{1,\dots,N\}$ it  holds 
\[
E_1(e_i \delta_0;B_L(0)) \geq E_1(e_i \delta_0;\R^3) - \eta \text{ and } \int_{\mathbb{R}^3\setminus B_L(0)} \1_{\{U_i < 0\}} U_i(x) ,dx \geq -\eta. 
\]
Next, let $\psi^{i}$ such that $E_1(e_i \delta_0; B_L(0)) = \inf_u E_1(e_i \delta_0,u,\psi^i;B_L(0))$. 
We extend $\psi^i$ by $0$ and define $\psi_r^{i,j} = \psi^i\left(\frac{\cdot - x_r^{i,j}}{r} \right)$.

Moreover, we define $\psi_r = \sum_{i=1}^N \sum_{j=1}^{m^i_r} \psi_r^{i,j} + r \alpha(r) R$ where $R \in H_0^1(\Omega)$ solves $-\eps(1) \Delta R = Q_0 \rho$.
As the supports of the $\psi_r^{i,j}$ are disjoint for $r$ small enough, we observe
\begin{align}
E_{r}(\rho_r) \geq &E_{r}(\rho_r,u_r,\psi_r) \nonumber \\ 
\geq &\sum_{i=1}^N \sum_{j=1}^{m_r^i} \left[E_{r}( e_i \delta_{x^{i,j}_r}, u_r, \psi_r^{i,j}; B_{Lr}(x_r^{i,j})) - r^{-3} \int_{\Omega \setminus B_{Lr}(x^{i,j}_r)} U_{LJ}^i \left(\frac{x-x_r^{i,j}}{r} \right) \, dx \right] \nonumber \\
+ &\int_{\Omega} Q_r \rho_r \, r \alpha(r) R - \frac{\eps(u_r)}{2r} r^2 \alpha(r)^2 |\nabla R|^2 \,dx - \int_{\Omega} \frac{\eps(u_r)}{r} \nabla \left(\sum_{i=1}^N \sum_{j=1}^{m^i_r} \psi_r^{i,j} \right) \cdot \nabla (r \alpha(r) R) \, dx \nonumber \\
\geq &\sum_{i=1}^N \left[(E_{1}( e_i \delta_0; B_{L}(0)) -\eta) \, |\rho_r^i|(\Omega) \right] + \int_{\Omega} Q_r \rho_r \, r \alpha(r) R - \frac{\eps(1)}{2r} r^2 \alpha(r)^2 |\nabla R|^2 \,dx \\
&- \int_{\Omega} \frac{\eps(u_r)}{r} \nabla \left(\sum_{i=1}^N \sum_{j=1}^{m^i_r} \psi_r^{i,j} \right) \cdot \nabla (r \alpha(r) R) \, dx \label{eq: mainestimatelower}
\end{align}
First, note that by the choice of $L$ we have 
\begin{align}
&\sum_{i=1}^N (E_{1}( e_i; B_{L}(0))-\eta) |\rho_r^i|(\Omega) \geq \sum_{i=1}^N (E_{1}( e_i \delta_0; \R^3)-2\eta) |\rho_r^i|(\Omega). \label{eq: estlower1}
\end{align}
For the second term in \eqref{eq: mainestimatelower}, since $r \alpha(r) \to \alpha$, we find as $r \to 0$
\begin{equation}
\frac1{\alpha(r)} \int_{\Omega} Q_r \rho_r \, r \alpha(r) R - \frac{\eps(1)}{2} \alpha(r)^2 r |\nabla R|^2 \, dx \longrightarrow  \frac{\alpha}2 < Q_0 \rho,R>_{H^{-1},H_0^1} = \frac{\alpha}{2\eps(1)} \| Q_0 \rho \|_{H^{-1}}^2. \label{eq: estlower2}
\end{equation}
For the third term in \eqref{eq: mainestimatelower}, we notice that similarly to the compactness proof, using the test function $\tilde{\psi}_r = \sum_{i=1}^N\sum_{j=1}^{M^i_n} \psi_r^{i,j}$ shows for
$\eta \leq \frac12 \min_i E_1(e_i \delta_0; B_L(0))$ that
\[
C \alpha(r) \geq E_{r}(\rho_r) \geq \frac12 \sum_{i=1}^M E_{1}( e_i \delta_0; B_{L}(0)) |\rho_r^i|(\Omega) \geq c  \sum_{i=1}^N \sum_{j=1}^{m_r^i} \int_{B_{r}(x_r^{i,j})} \frac{\eps(0)}{r} |\nabla \psi_r^{i,j}|^2 \, dx.
\]
As $r\alpha(r) \to \alpha$, it follows that $\sum_{i=1}^{N}\sum_{j=1}^{m^i_r} \psi_r^{i,j}$ is a bounded sequence in $H_0^1(\Omega)$. 
As the measure of the support of the function $\sum_{i=1}^{M}\sum_{j=1}^{m^i_r} \psi_r^{i,j}$ goes to zero as $r\to 0$, it follows that $\sum_{i=1}^{M}\sum_{j=1}^{m^i_r} \psi_r^{i,j} \rightharpoonup 0$ in $H_0^1(\Omega)$.
Then, as $u_r \to 1$ boundedly in measure, it follows for the last term in \eqref{eq: mainestimatelower} that 
\begin{equation} \label{eq: estlower3}
\frac1{\alpha(r)} \int_{\Omega} \frac{\eps(u_r)}{2r} \left(\nabla \sum_{i=1}^{N}\sum_{j=1}^{m^i_r} \psi_r^{i,j}\right) \cdot r\alpha(r) \nabla R \,dx \rightarrow 0 \text{ as } r\to 0.
\end{equation}
Combining \eqref{eq: mainestimatelower}, \eqref{eq: estlower1}, \eqref{eq: estlower2}, and \eqref{eq: estlower3} shows the claimed lower bound after sending $\eta \to 0$.
\end{proof}

In order to prove the existence of a recovery sequence, we first prove the following simple approximation result which we will also use in the supercritical regime.

\begin{lemma} \label{lemma: approxrho}
Let $\Omega \subseteq \mathbb{R}^3$ and $\alpha(r) \to \infty$ such that $\alpha(r) r^3 \rightarrow 0$. 
Let $\nu = \xi \1_{E}$ for an open sets $E \Subset \Omega$ and $\xi^i \geq0$ for all $i=1,\dots,M$. 
Moreover, let $\lambda_1,\dots,\lambda_K >0$ and $\xi_1,\dots,\xi_K \in \R^M$ such that $\xi= \sum_{k=1}^K \lambda_k \xi_k$.
Then there exists a sequence of measures $\rho_r = \sum_{k=1}^{K} \sum_{j=1}^{M_{r}^k} \nu_r^{j,k}$ where each $\nu_r^k$ is of the form $\xi_k \delta_{x_n^j}$ such that $\operatorname{dist}(\supp(\nu_r^{j,k}),\supp(\nu_r^{k',j'})) \geq c \alpha(r)^{-\frac13}$ for all $(k,j) \neq (k',j')$ and
\begin{equation} \label{eq: approximationrho}
\frac{\rho_r}{\alpha(r)} \weakstar \nu \text{ in } \mathcal{M}(\Omega) \text{ and } \frac{|M_r^k|}{\alpha(r)} \rightarrow \lambda_k |E|.
\end{equation}

\end{lemma}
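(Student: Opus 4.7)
The plan is to place the Diracs of the different types on a common fine cubic lattice inside $E$, and to assign the types to the lattice points by a periodic deterministic labelling whose frequencies approach $\lambda_k/\Lambda$, where $\Lambda \coloneqq \sum_{k=1}^K \lambda_k$. Concretely, I would set the lattice spacing to be $s(r) \coloneqq \Lambda^{-1/3}\alpha(r)^{-1/3}$, so that $s(r)\to 0$ and $s(r)^{-3} = \Lambda\,\alpha(r)$. Then I would consider the slightly shrunken set $E_r \coloneqq \{x \in E : \dist(x,\partial E) > s(r)\}$ and the finite lattice $\Lambda_r \coloneqq s(r)\Z^3 \cap E_r$. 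Because $E$ is open and compactly contained in $\Omega$, one has $|\Lambda_r| = |E_r|/s(r)^3 + O(s(r)^{-2}\Hm^2(\partial E_r)) = \Lambda\,\alpha(r)\,|E| + o(\alpha(r))$.

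Next, I would build the labelling. Pick rationals $\mu_k^r \in \mathbb{Q}_{>0}$ with $\mu_k^r \to \lambda_k/\Lambda$ and $\sum_k \mu_k^r = 1$, and let $N_r$ be their common denominator, chosen so that $N_r \to \infty$ slowly enough to ensure $N_r = o(\alpha(r)^{1/3})$. Tile $\Z^3$ by $N_r\times 1\times 1$ blocks (or $N_r^{1/3}$-cubes if $N_r$ is a cube) and inside each block assign the type $k$ to exactly $\mu_k^r N_r$ of the sites in a fixed periodic pattern; this defines $\ell_r\colon s(r)\Z^3 \to \{1,\dots,K\}$. Let $M_r^k \coloneqq \ell_r^{-1}(k) \cap \Lambda_r$, enumerate its elements $\{x_r^{j,k}\}_{j=1}^{|M_r^k|}$, and set
\[
 \nu_r^{j,k} \coloneqq \xi_k \delta_{x_r^{j,k}}, \qquad \rho_r \coloneqq \sum_{k=1}^K \sum_{j=1}^{|M_r^k|} \nu_r^{j,k}.
\]
Distinct lattice points are at distance at least $s(r) = \Lambda^{-1/3}\alpha(r)^{-1/3}$, which gives the required separation with $c=\Lambda^{-1/3}$. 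The count is $|M_r^k| = \mu_k^r\,|\Lambda_r| + O(N_r)$, which divided by $\alpha(r)$ tends to $(\lambda_k/\Lambda)\cdot\Lambda\,|E| = \lambda_k |E|$.

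For the vague convergence, fix $\varphi \in C_c(\Omega)$. Using that $\varphi$ is uniformly continuous and that the labelling is periodic on a scale $s(r)N_r^{1/3} = o(1)$ much smaller than the modulus of continuity, a standard Riemann-sum estimate yields
\[
 \frac{1}{\alpha(r)}\langle \rho_r,\varphi\rangle = \sum_{k=1}^K \xi_k \, s(r)^3\Lambda \sum_{z \in M_r^k} \varphi(z) \xrightarrow[r\to 0]{} \sum_{k=1}^K \xi_k \lambda_k \int_E \varphi\,dx = \int_\Omega \varphi\,d\nu,
\]
where the factor $s(r)^3\Lambda = 1/\alpha(r)$ turns the sum into a Riemann approximation of $(\lambda_k/\Lambda)\int_E \varphi$. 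The only step requiring any care is choosing $\mu_k^r, N_r$ so that both the frequency approximation $\mu_k^r \to \lambda_k/\Lambda$ and the smallness $N_r = o(\alpha(r)^{1/3})$ hold simultaneously; this is a routine diagonal extraction. No obstacle of substance arises — the construction is purely deterministic and geometric, combining a fine lattice with a periodic colouring.
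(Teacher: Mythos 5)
Your construction is correct, and it handles the general case $K\geq 2$ more explicitly than the paper does. The paper's proof treats $K=1$, $\lambda_1=1$ concretely (Diracs on the lattice $\alpha(r)^{-1/3}\Z^3$ with cubes $Q_x\subseteq E$) and then, for the general case, only sketches a two-step reduction: first approximate $\nu$ weakly in $L^2$ by ``alternating'' functions that are constant on subsets of $E$ of volume fraction $\lambda_k/\Lambda$, then conclude by a diagonal argument, applying the $K=1$ construction separately in each subregion. Your approach short-circuits this by interleaving the types directly at the finest scale: you put all Diracs on a single cubic lattice of spacing $\Lambda^{-1/3}\alpha(r)^{-1/3}$ (so that the total count matches $\Lambda\alpha(r)|E|$ with $\Lambda=\sum_k\lambda_k$), and you assign types by a periodic labelling whose block period $N_rs(r)$ is chosen to tend to zero. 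This buys you two things the paper's sketch leaves implicit: (i) the required separation $\dist(\supp\nu_r^{j,k},\supp\nu_r^{j',k'})\geq c\alpha(r)^{-1/3}$ is immediate because all Diracs (of all types) sit at distinct points of one lattice, whereas in the partition-based approach one must arrange the sublattices in different subregions to stay far apart; and (ii) the counts $M_r^k/\alpha(r)\to\lambda_k|E|$ come out of the single spacing $\Lambda^{-1/3}\alpha(r)^{-1/3}$ together with the frequency $\mu_k^r\to\lambda_k/\Lambda$, with no need to restate the spacing per region. The only mild hypothesis your Riemann-sum step needs, as in the paper's proof, is that $\partial E$ has Lebesgue measure zero (so lattice counts have the stated asymptotics), which is implicit in the way the lemma is used.

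Two small points worth tightening if you were to write this out fully: make sure the block structure is consistent with the claimed period $s(r)N_r^{1/3}$ — if you use $N_r\times1\times1$ blocks the relevant period is $N_rs(r)$, so the smallness condition should be $N_r=o(\alpha(r)^{1/3})$ in that case (which you do state); and note that the asymptotic $|\Lambda_r| = |E_r|/s(r)^3 + O(s(r)^{-2})$ uses that $E$ has finite perimeter, which is fine here but is slightly more than ``open, $E\Subset\Omega$'' in full generality.
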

\begin{proof}
First, we assume that $K=1$, $\lambda_1 = 1$, and $\xi_1 = \xi$.
Cover $E$ with cubes $Q_x$ with sidelength $\alpha(r)^{-\frac13}$ and centers $x$ in the lattice $\mathcal{L}_r = \alpha(r)^{- \frac13}\Z^3$.
Then define 
\[
\rho_r = \sum_{x \in \mathcal{L}_r s.t. Q_x \subseteq E} \xi_1 \delta_{x}. 
\]
We observe immediately the convergences stated in \eqref{eq: approximationrho}.

For the general case, first approximate $\rho$ weakly in $L^2$ by alternating functions which are of constantly $\xi_k$ on subsets of $E$ of volume fraction $\frac{\lambda_k}{\sum_{k=1}^K \lambda_k}$. The general case then follows by a diagonal argument.
\end{proof}

Now, we are able to prove the upper bound.

\begin{proposition}\label{prop: critupper}
Let $\rho \in \mathcal{M}(\Omega;\R^N)$ such that $ Q_0\rho \in H^{-1}(\Omega)$. 
Then there exists $\{\rho_r\}_{r>0} \subseteq \mathcal{M}(\Omega)$ such that 
$\frac{\rho_r}{\alpha(r)} \weakstar \rho$ in $\mathcal{M}(\Omega;\R^N)$, $\frac{Q_0\rho_r}{\alpha(r)} \rightharpoonup \rho$ in $H^{-1}(\Omega)$, and 
\[
\limsup_{n\to\infty} \frac1{\alpha(r)} E_{r} (\rho_r) \leq \frac{\alpha}{2\varepsilon(1)} \| \rho \|_{H^{-1}(\Omega)}^2 + \sum_{i=1}^N |\rho^i|(\Omega) E_1(e_i \delta_0;\R^3).
\]
\end{proposition}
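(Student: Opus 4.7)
The plan is to place $\sim \alpha(r)$ charges on a mesoscopic lattice of spacing $\alpha(r)^{-1/3}\sim r^{1/3}$, and to use the test potential $\psi_r = \sum_{i,j}\psi^i((\cdot-x_r^{i,j})/r) + r\alpha(r)R$, where $\psi^i$ is the optimal potential for the cell problem $E_1(e_i\delta_0;\R^3)$ (truncated on a large ball) and $R\in H^1_0(\Omega)$ solves $-\eps(1)\Delta R = Q_0\rho$. The localized profiles produce the self-energies, while the smooth mesoscopic correction $R$ produces the $H^{-1}$-interaction.

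First I would reduce to measures of the form $\rho = \sum_k \xi_k\1_{E_k}$ with $E_k\Subset\Omega$ open and $\xi_k \geq 0$ componentwise. For such measures the map $\rho\mapsto Q_0\rho$ is continuous (after mollification and cut-off from $\partial\Omega$) from vague convergence of nonnegative measures of uniformly bounded total variation into strong convergence in $H^{-1}(\Omega)$, and the map $\rho\mapsto|\rho^i|(\Omega)$ is vaguely continuous as long as the supports stay compactly contained in $\Omega$. Since $E^{crit}$ is continuous in these data on bounded sets, a diagonal argument analogous to the density step in the proof of Theorem \ref{thm: GammaB} reduces matters to this simple class.

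For such piecewise constant $\rho$, Lemma \ref{lemma: approxrho} yields $\rho_r = \sum_{i,j}e_i\delta_{x_r^{i,j}}$ with atoms separated by at least $c\alpha(r)^{-1/3}\gg r$ and $\rho_r/\alpha(r)\weakstar\rho$; trimming atoms within $\delta_r$ of $\partial\Omega$ costs $o(\alpha(r))$ and places $\rho_r\in \tilde{\mathcal{A}}_r(\Omega)$. Using Lemma \ref{lemma: optimalprofiles} I take, for each species $i$, an optimal $(u^i,\psi^i)$ truncated outside some $B_L(0)$ at vanishing cost, define $u_r$ to equal the rescaled, translated $u^i$ on each $B_{Lr}(x_r^{i,j})$ and $1$ elsewhere, and use $\psi_r$ as above. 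By the disjointness of the local supports, the local part of $E_r(\rho_r,u_r)$ (Lennard-Jones, volume, perimeter, mass) factors as a sum over $(i,j)$ of the cell-energies up to a Lennard-Jones tail of order $o(\alpha(r))$, converging after division by $\alpha(r)$ to $\sum_i |\rho^i|(\Omega)\cdot(\text{local part of }E_1(e_i\delta_0;\R^3))$. Expanding $|\nabla\psi_r|^2$ produces three pieces: a sum of the localized electrical cell-energies yielding the remaining part of $\sum_i E_1(e_i\delta_0;\R^3)|\rho^i|(\Omega)$; a smooth bulk contribution $r\int Q_r\rho_r R-\tfrac{r^2\alpha(r)\eps(1)}{2}\int|\nabla R|^2$ which, using $Q_r\rho_r/\alpha(r)\rightharpoonup Q_0\rho$ in $H^{-1}$, $r\alpha(r)\to\alpha$ and the equation for $R$, converges after division by $\alpha(r)$ to exactly $\tfrac{\alpha}{2\eps(1)}\|Q_0\rho\|^2_{H^{-1}}$; and a cross term.

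The main technical obstacle I anticipate is showing that the cross term $\int \eps(u_r)\nabla\Psi_r\cdot\nabla R\,dx$, with $\Psi_r\coloneqq\sum_{i,j}\psi^i((\cdot-x_r^{i,j})/r)$, vanishes as $r\to 0$. A change of variables gives $\|\nabla\Psi_r\|_{L^2}^2 \lesssim r\,|\rho_r|(\Omega)\lesssim r\alpha(r) = O(1)$, so $\Psi_r$ is bounded in $H^1_0(\Omega)$; at the same time $|\supp \Psi_r|\lesssim r^3\alpha(r)\to 0$, so by Sobolev embedding and Hölder's inequality $\Psi_r\to 0$ strongly in $L^2$, hence $\Psi_r\rightharpoonup 0$ in $H^1_0(\Omega)$. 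Since $|\{u_r\neq 1\}|\to 0$, dominated convergence yields $\eps(u_r)\nabla R\to \eps(1)\nabla R$ strongly in $L^2$, and the product of a weakly and a strongly convergent sequence passes to the limit zero. Combining the local, bulk and cross-term limits gives $\limsup_{r\to 0}E_r(\rho_r)/\alpha(r)\leq E^{crit}(\rho)$ on the dense class, and the first-step reduction finishes the proof.
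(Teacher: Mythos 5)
There is a genuine gap at the heart of your argument: you treat the electric potential as something you may \emph{choose}. But in the definition of the energy the electrical part is a supremum over $\psi$,
\[
E_r^{el}(\rho_r,u_r)=\sup_{\psi\in H_0^1(\Omega)}\int_{\Omega}\Big(Q_r\rho_r\,\psi-\frac{\eps(u_r)}{2r}|\nabla\psi|^2\Big)\,dx ,
\]
so plugging in your test potential $\psi_r=\sum_{i,j}\psi^i((\cdot-x_r^{i,j})/r)+r\alpha(r)R$ bounds $E_r^{el}(\rho_r,u_r)$ from \emph{below}, not from above. This is exactly the device used in the proof of the lower bound (Proposition \ref{prop: critlower}), where the inequality goes the right way; for the $\limsup$ inequality it proves nothing, because the true maximizer could extract strictly more energy than your competitor. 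Consequently your ``main technical obstacle'' (the vanishing of the cross term $\int\eps(u_r)\nabla\Psi_r\cdot\nabla R$) is not the real difficulty: even after that computation you have only shown that $E_r(\rho_r,u_r,\psi_r)/\alpha(r)$ converges to $E^{crit}(\rho)$, while $E_r(\rho_r)\le E_r(\rho_r,u_r)=\sup_\psi E_r(\rho_r,u_r,\psi)$ could still exceed this by the (a priori uncontrolled) quadratic defect $\int\frac{\eps(u_r)}{2r}|\nabla(\psi_r^{opt}-\psi_r)|^2\,dx$.

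What is actually needed — and what the paper does — is to work with the true maximizer $\psi_r^{opt}$, i.e.\ the solution of $-\div\big(\tfrac{\eps(u_r)}{r}\nabla\psi_r^{opt}\big)=Q_r\rho_r$, and to show that it asymptotically splits into the localized cell profiles plus a smooth bulk part. Concretely, one sets $R_r=\nabla\psi_r^{opt}-\sum_{j,k,i}(\nabla\psi_j^{k,i})\varphi_j^{k,i}$ with cut-offs $\varphi_j^{k,i}$ at scale $\gamma_r\approx\alpha(r)^{-1/3}$, and proves (i) the localized part divided by $r\alpha(r)$ converges weakly to $0$ in $L^2$, (ii) the commutator measure $\tilde\mu_r/\alpha(r)$ converges to $Q_0\rho$ in $H^{-1}$, and crucially (iii) $R_r/(r\alpha(r))\to\nabla\psi$ \emph{strongly} in $L^2$, where $-\eps(1)\Delta\psi=Q_0\rho$; this strong convergence is what identifies the limit of the maximizer's energy and closes the upper bound. (Alternatively, you could dualize the maximization into a minimization over fluxes $\sigma$ with $\div\sigma=Q_r\rho_r$, where competitors do give upper bounds, but that is not what you propose.) Your construction of $\rho_r$ via Lemma \ref{lemma: approxrho}, of $u_r$ from the optimal cell profiles, the treatment of the local terms, and the reduction to piecewise constant densities all match the paper and are fine; the missing ingredient is the analysis of $\psi_r^{opt}$ itself.
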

\begin{proof}
\textbf{Step 1:}\textit{$\rho = \sum_{i=1}^L \xi_i \1_{E_{i}}$ where $\xi_i \cdot e_k \geq 0$ for all $k=1,\dots,N$, and $E_{i} \Subset \Omega$ are open.} \\
We apply Lemma \ref{lemma: approxrho} to each $E_i$, $e_1,\dots,e_N$, and $\lambda_1^i = \xi_i \cdot e_1,\dots,\lambda_N^i = \xi_i \cdot e_N$ to obtain
a sequences of measures $\rho_r = \sum_{i=1}^{L} \sum_{k=1}^N \sum_{j=1}^{M_r^{k,i}} e_k \delta_{x_j^{k,i}}$ such that  
\[
\frac{\rho_r}{\alpha(r)} \weakstar \rho \text{ in } \mathcal{M}(\Omega;\R^N), \text{ and }\frac{|M_r^{j,k}|(\Omega)}{\alpha(r)} \rightarrow \xi^k |E_j|.
\]
Then, by construction we have that for $x \neq y \in \bigcup_{j} \supp(\rho^j_r)$ it holds $|x-y| \geq c \alpha(r)^{-\frac13}$. 
Hence, $\rho_r \in \tilde{A}_r(\Omega)$ for $r$ small enough. \\
Next, let $u^{k}$, $\psi^{k}$ be a minimax pair for $E_1(e_k \delta_0;\R^3)$, in particular $-\operatorname{div }( \eps(u^k) \nabla \psi^k) = Q_1 (e_k\delta_0)$. \\

Moreover, by the usual argument, we can find $R>0$ such that for all $k$ it holds that $u^{k} = 1$ outside $B_R(0)$.
This implies that $|\psi^{k}| \leq \frac{C}{|x|}$ for $|x| \geq 2R$ and also $|\nabla \psi^{k} (x)| \leq \frac{C}{|x|^2}$ for $|x| \geq 2R$.
We define the functions
\[
 u_r(x) = \prod_{j,k,i} u^{k}\left(\frac{x - x_j^{k,i}}{r}\right) \text{ and } \psi_j^{k,i}(x) = \psi^{k}\left(\frac{x - x_j^{k,i}}{r} \right).
\] 
Next, let $\psi^{opt}_r \in H_0^1(\Omega)$ be optimal for $E_{r}(\rho_r,u_r)$ i.e., $-\operatorname{div }( \frac{\eps(u_r)}{r} \nabla \psi^{opt}_r) = Q_{r} \rho_r$.
Moreover, let $\gamma_r \approx \alpha(r)^{-\frac13}$ such that for each two points in $\supp(\nu_r)$ we have $|x-y| \geq \frac{\gamma_r}{2}$. 
We define 
\[
R_r = \nabla \psi^{opt}_r - \sum_{j,k,i} \nabla (\psi_j^{k,i}) \varphi_j^{k,i}, 
\]
where $\varphi_j^{k,i} \in C^{\infty}_c(B_{\gamma_r}(x_j^{k,i})) = 1$ such that $\varphi_j^{k,i} =1 $ on $B_{\gamma_r/2}(x_j^{k,i})$ and $|\nabla \varphi_j^{k,i}| \leq C \gamma_r^{-1}$. \\
We claim that
\begin{enumerate}[label=(\roman*)]
 \item $\frac{\sum_{j,k,i} (\nabla \psi_j^{k,i}) \varphi_j^{k,i}}{\alpha(r) r} \rightharpoonup 0$ in $L^2(\Omega)$, \label{item: upper1}
 \item $\frac{\tilde{\mu}_r}{\alpha(r)} \rightarrow Q_0 \rho$ in $H^{-1}(\Omega)$, where $\tilde{\mu}_r = -\sum_{j,k,i} \frac{\eps(1)}{r} \nabla (\psi_j^{k,i}) \cdot \nabla \varphi_j^{k,i}$, \label{item: upper2}
 \item $\frac{R_r}{r \alpha(r)} \rightarrow \nabla \psi$ strongly in $L^2(\Omega)$. \label{item: upper3}
\end{enumerate}

From the optimality of the $\psi^{k}$ it follows since the supports of the occurring functions are disjoint that $\frac{\sum_{j,k,i} (\nabla \psi_j^{k,i}) \varphi_j^{k,i}}{\alpha(r) r}$ is bounded in $L^2(\Omega)$. 
Moreover, using the bound $|\nabla \psi_j^{k,i}| \leq \frac{Cr}{|x - x_j^{k,i}|^2}$ for all $|x - x_j^{k,i}| \gg r$ one can show that for each $\alpha(r)^{-\frac13} \gg \eta_r \gg r$ the $L^2$-norm of 
$\frac{\sum_{j,k,i} \nabla (\psi_j^{k,i}) \varphi_j^{k,i}}{\alpha(r) r} \1_{\Omega \setminus \bigcup_{j,i,k} B_{\eta_r}(x_j^{k,i})}$ goes to zero.
This shows  \ref{item: upper1}.\\
For \ref{item: upper2}, observe that $-\operatorname{div} (\frac{\eps(u_r)}{r} R_r ) = -\sum_{j,k,i} \frac{\eps(1)}{r} \nabla (\psi_j^{k,i}) \cdot \nabla \varphi_j^{k,i} =: \tilde{\mu}_r$.
Again, one can show using the bound on $\nabla \psi_j^{k,i}$ that $\frac{\tilde{\mu}_r}{\alpha(r)}$ is bounded in $L^2$. 
Simple testing shows that $\frac{\tilde{\mu}_r}{\alpha(r)} \rightharpoonup Q_0\rho$ in $L^2(\Omega)$.
Hence, \ref{item: upper2}.\\
Combining \ref{item: upper1} and \ref{item: upper2}, we find that also $Q_r\rho_r = -\operatorname{div } (\frac{\eps(u_r)}{r \alpha(r)} \nabla \psi^{opt}_r) \rightharpoonup Q_0\rho$ in $H^{-1}$ and consequently $\frac{\psi_r^{opt}}{r \alpha(r)}$ is bounded in $H_0^1$.
Hence, up to a subsequence $\frac{\psi_r^{opt}}{r \alpha(r)} \rightharpoonup \psi \in H_0^1(\Omega)$ which ---using that $\eps(u_r) \rightarrow \eps(1)$ boundedly in measure--- can be shown to be the unique solution to $-\eps(1) \Delta \psi = Q_0 \rho$.
Consequently, it follows from the definition of $R_r$ and \ref{item: upper1} that $\frac{R_r}{r \alpha(r)} \rightharpoonup \nabla \psi$ in $L^2$.\\
Next, we prove that actually $\frac{R_r}{r \alpha(r)} \rightarrow \nabla \psi$ strongly in $L^2$.
Observe
\begin{align}
 \int_{\Omega} \eps(u_r) \left|\frac{R_r}{r \alpha(r)}\right|^2 \,dx &= \int_{\Omega} \eps(u_r) \frac{R_r}{r \alpha(r)} \cdot \frac{1}{r \alpha(r)}( \nabla \psi_r^{opt} - \sum_{j,i,k} (\nabla \psi_j^{k,i}) \varphi_j^{k,i} ) \,dx \\
 &= \int_{\Omega} \frac{\tilde{\mu}_r}{\alpha(r)} \frac{\psi_r^{opt}}{r \alpha(r)} \,dx + \int_{\Omega} \frac{\tilde{\mu}_r}{\alpha(r)} \frac{\sum_{j,i,k} \psi_j^{k,i} \varphi_j^{k,i}}{r \alpha(r)} \,dx
+ \int_{\Omega} \eps(u_r) \frac{R_r}{r \alpha(r)} \cdot \frac{\sum_{j,i,k} \psi_j^{k,i} \nabla \varphi_j^{k,i}}{r \alpha(r)} \,dx. \label{eq: estRn}
 \end{align}
Now, the first term in \eqref{eq: estRn} converges to $<Q_0\rho,\psi>_{H^{-1},H^1_0} = \int_{\Omega} \eps(1) |\nabla \psi|^2$.
Using the bounds on $\psi_j^{k,i}$ a simple computation shows that $\left\| \frac{\sum_{j,i,k} \psi_j^{k,i} \nabla \varphi_j^{k,i}}{r \alpha(r)} \right\|_{L^2}^2 \leq C \alpha(r)^{-\frac23}$.
Hence, H\"older's inequality proves that the last term converges to $0$.
For the second term in \eqref{eq: estRn}, we notice that by what we have already proved above we find that $ \frac{\nabla \left(\sum_{j,i,k} \psi_j^{k,i} \varphi_j^{k,i}\right)}{r \alpha(r)} \rightharpoonup 0$ in $L^2$.
Consequently, also $\frac{\sum_{j,i,k} \psi_j^{k,i} \varphi_j^{k,i}}{r \alpha(r)} \rightharpoonup 0$ in $H_0^1$.
Hence, in the second term we may pass to the weak-strong limit which is $0$.
This shows that 
\[
\int_{\Omega} \eps(u_r) \left|\frac{R_r}{r \alpha(r)}\right|^2 \,dx \rightarrow \int_{\Omega} \eps(1) |\nabla \psi|^2. 
\]
Together with the weak convergence of $\frac{R_r}{r \alpha(r)} \rightharpoonup \nabla \psi$ in $L^2$ this implies
\begin{align*}
 \int_{\Omega} \left| \frac{R_r}{r \alpha(r)} - \nabla \psi \right|^2 \,dx &\leq \int_{\Omega} \eps(u_r) \left| \frac{R_r}{r \alpha(r)} - \nabla \psi \right|^2 \,dx  \\
 &= \int_{\Omega} \eps(u_r) \left| \frac{R_r}{r \alpha(r)} \right|^2 - 2 \eps(u_r) \frac{R_r}{r \alpha(r)} \cdot \nabla \psi + \eps(u_r) |\nabla \psi|^2 \,dx \stackrel{r\to0}{\rightarrow} 0,
\end{align*}
which is \ref{item: upper3}.\\
For the energy we find the estimate
\begin{align}
E_{r}(\rho_r) \leq &E_{r}(\rho_r,u_r,\psi_r^{opt}) \nonumber \\
\leq &\sum_{i=1}^{L} \sum_{k=1}^N M_r^{k,i} \left(E_1(e_k \delta_0,u^{k},\psi^{k};\R^3) -  \int_{\R^3 \setminus B_{\gamma_r / r}(0)} U_{1,e_k \delta_0} \1_{\{U_{1,e_k\delta_0} \leq 0\}} \,dx \right) \\
&+ \int_{\Omega} \frac{\eps(u_r)}{2r} |R_r|^2 \,dx + 2 \int_{\Omega} \frac{\eps(u_r)}{r} R_r \cdot \sum_{j,k,i} (\nabla \psi_j^{k,i}) \varphi_j^{k,i} \,dx. \label{eq: estimateenergycrit}
\end{align}
Here, we used that as $\psi_r^{opt}$ is optimal for $\rho_r$ and $u_r$, $u_r$ is locally the rescaled optimal $u^{k}$, and we can write the electric energies by integration by parts as integrals involving only the gradients.
Since $\frac{M_r^{k,i}}{\alpha(r)} \rightarrow \xi_k^i |E_i|$, it follows using \ref{item: upper1} - \ref{item: upper3} and $\frac{Q_r\rho_r}{\alpha(r)} \rightharpoonup Q_0 \rho$ in $H^{-1}(\Omega)$ that
\begin{align}
\limsup_{r \to 0} \frac1{\alpha(r)} E_{r}(\rho_r) &\leq \sum_{i=1}^L \sum_{k=1}^{N} \xi_k^i |E_i| \, E_1(e_k \delta_0;\R^3)+ \alpha <Q_0 \rho, R>_{H^{-1},H_0^1} - \alpha \int_{\Omega}  \frac{\eps(1)}{2} |\nabla \psi|^2 \, dx \\
&= \sum_{k=1}^N |\rho^k|(\Omega) E_1(e_k \delta_0;\R^3) + \frac{\alpha}{2 \eps(1)} \| Q_0\rho \|_{H^{-1}}^2. \label{eq: estimateenergycrit2}
\end{align}

\noindent \textbf{Step 2: }\textit{Energy density} \\
Let $\rho$ be a measure in $\mathcal{M}(\Omega;\R^N)$ such that $\rho^i \geq 0$ for all $i\in\{1,\dots,N\}$ and $Q_0\rho \in H^{-1}(\Omega)$. 
Using the continuity properties of $E^{crit}$ and considering $\rho + \eta (1,\dots,1) \mathcal{L}^3_{|\Omega}$ for $\eta>0$, we may assume without loss of generality that $\rho^i > 0$ for all $i \in \{1,\dots,N\}$.\\
As $\mathcal{M}(\Omega) \hookrightarrow W^{-1,p}(\Omega)$ for all $1\leq p < \frac32$, there exists for each $i \in\{1,\dots,N\}$ $\psi^i \in W_0^{1,\frac43}(\Omega)$ such that $-\Delta \psi_i = \rho^i$. 
On the other hand, the equation $-\Delta \psi = Q_0 \rho$ has a solution $\psi \in H_0^1(\Omega)$.
By uniqueness, $\psi  = \sum_{i=1}^N \left(\int_{\R^3} \phi_i\,dx\right) \psi^i$.
Using local reflection over the boundary, we can extend $\psi^i$ to a function $\tilde{\psi}^i$ in a neighborhood $\tilde{\Omega}$ of $\Omega$ such that in $\tilde{\Omega}$ it still holds that $-\left(\int_{\R^3} \phi_i\,dx\right)\Delta \tilde{\psi}^i$ is a nonnegative measure such that $|\Delta \psi^i|(\partial \Omega) = 0$ and $\tilde{\psi} = \sum_{i=1}^N \left(\int_{\R^3} \phi_i\,dx\right)\tilde{\psi}^i \in H^1(\tilde{\Omega})$.\\ 
Next, let $\eta_n$ be a standard mollifier and define the measures $\rho^i_n = \Delta \tilde{\psi}^i * \eta_n \in \mathcal{M}(\Omega)$. 
Then $\rho_n \weakstar \rho$ in $\mathcal{M}(\Omega;R^N)$ such that $|\rho^i_n|(\Omega) \rightarrow |\rho|(\Omega)$.
Moreover, $\tilde{\psi} * \eta_n \rightarrow \psi$ strongly in $H^{1}(\Omega)$. 
In particular, $Q_0\rho_n \rightarrow Q_0\rho$ strongly in $H^{-1}(\Omega)$. 
By Reshetnyaks' theorem, we know that the energy $E^{crit}$ is continuous with respect to the convergence established above.\\  
Hence, the measures $\rho \in \mathcal{M}(\Omega;\R^N)$ such that $Q_0\rho \in H^{-1}(\Omega)$ and $\rho^i \geq 0$ with a smooth density with respect to the Lebesgue measure are energy-dense in $\mathcal{M}(\Omega;\R^N)$. 
On the other hand, those measures can be approximated strongly in $L^2$ by measures as in step 1. 
This shows that the measures from step 1 are energy dense in $\mathcal{M}(\Omega;\R^N)$.

\end{proof}
\begin{remark}
Combining the argument in the upper bound in the subcritical regime and the proof above, one can see that it is also possible to prove the upper bound for the more general energy $E_r$ i.e., the existence of a recovery sequence $\rho_r$ such that
\[
\limsup_{r\to0} \frac{E_r(\rho_r)}{\alpha(r)} \leq \int_{\Omega} \varphi\left(\frac{d\rho}{d|\rho|}\right) \, d|\rho| + \frac{\alpha}{2\eps(1)} \|Q_0 \rho\|_{H^{-1}}^2,
\]
where $\varphi$ is the self-energy defined in \eqref{eq: defselfenergy}.
\end{remark}

\subsection{The Supercritical Regime}\label{sec: supercriticalB}

Finally, in this section we assume that $\alpha(r) r \to \infty$ and $\alpha(r) r^3 \to 0$.
By the heuristics discussed in Subsection \ref{sec: heuristics} the electric interaction dominates the energy.
We prove Theorem \ref{thm: Bsuper} in Propositions \ref{prop: supercomp}, \ref{prop: superlower}, and \ref{prop: superupper}.

\begin{proposition}[Compactness]\label{prop: supercomp}
Let $r \to 0$ and $\alpha(r) \to \infty$ such that $r \alpha(r) \to \infty$. 
Let $(\rho_r)_r$ be a sequence in $\mathcal{M}(\Omega;\R^N)$ such that $E_{r}(\rho_r) \leq C \alpha(r)^2 r$. 
If $a>0$ is large enough then there exists a subsequence and $\mu \in H^{-1}(\Omega)$ such that 
\[
\frac{Q_{r} \rho_r }{\alpha(r)} \rightharpoonup \mu \text{ in } H^{-1}(\Omega).
\]
\end{proposition}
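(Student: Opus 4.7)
The plan is to use the electrical energy as a lower bound on $\|Q_r\rho_r\|_{H^{-1}(\Omega)}^2$ and then extract a weak $H^{-1}$-limit by reflexivity.

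First, by Lemma \ref{minimax lemma} pick $u_r\in L^\infty(\Omega,\{0,1\})$ with $E_r(\rho_r)=E_r(\rho_r,u_r)$. The terms $r^{-3}\beta\int_\Omega(1-u_r)\,dx$ and $r^{-2}\gamma|Du_r|(\Omega)$ are nonnegative. For the Lennard--Jones contribution, the change of variables $z=(x-y)/r$ and assumption (A2) (integrability of $(U_{LJ}^i)^-$) give
\[
r^{-3}\int_\Omega U_{r,\rho_r}(x)\,u_r(x)\,dx \geq -\sum_{i=1}^N \sum_{y\in\supp(\rho_r^i)} \|(U_{LJ}^i)^-\|_{L^1(\R^3)} \geq -C_0 |\rho_r|(\Omega),
\]
for a constant $C_0$ independent of $r$. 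Choosing $a\geq C_0$ therefore guarantees
\[
a|\rho_r|(\Omega) + r^{-3}\int_\Omega U_{r,\rho_r}u_r\,dx \geq 0,
\]
so that $E_r(\rho_r)\geq E_r^{el}(\rho_r,u_r)$.

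Next, since $\eps(u_r)\leq\eps(1)$ pointwise by (A1), comparing pointwise integrands in the definition of $E_r^{el}$ (recall $B=0$) yields
\[
E_r^{el}(\rho_r,u_r) \geq \sup_{\psi\in H_0^1(\Omega)}\int_\Omega \left(Q_r\rho_r\,\psi - \frac{\eps(1)}{2r}|\nabla\psi|^2\right)\,dx = \frac{r}{2\eps(1)}\,\|Q_r\rho_r\|_{H^{-1}(\Omega)}^2,
\]
where the last equality follows by solving the linear Euler--Lagrange equation $-\eps(1)r^{-1}\Delta\psi = Q_r\rho_r$ and evaluating the resulting Dirichlet energy. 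Combined with the hypothesis $E_r(\rho_r)\leq C\alpha(r)^2 r$, this gives the uniform bound
\[
\left\|\frac{Q_r\rho_r}{\alpha(r)}\right\|_{H^{-1}(\Omega)}^2 \leq 2C\eps(1).
\]

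Finally, since $H^{-1}(\Omega)$ is the dual of the Hilbert space $H_0^1(\Omega)$ (in the case $\Omega$ unbounded, equipped with the $H^1$-seminorm as in the paper's convention), it is itself a Hilbert space and hence reflexive. The Banach--Alaoglu theorem then yields a subsequence and some $\mu\in H^{-1}(\Omega)$ with $Q_r\rho_r/\alpha(r)\rightharpoonup\mu$. The only real obstacle is verifying that the local non-electrical contributions cannot swamp the electrical term from below; this is handled by the coercive $a|\rho_r|$ term once $a$ is chosen large enough to absorb the negative part of the Lennard--Jones potential, precisely the "$a$ large enough" hypothesis of (A1).
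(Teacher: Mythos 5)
Your proof is correct and follows essentially the same route as the paper: drop the nonnegative local terms (absorbing the Lennard--Jones part with $a|\rho_r|$ for $a$ large), bound the electrical energy from below by $\frac{r}{2\eps(1)}\|Q_r\rho_r\|_{H^{-1}}^2$ using $\eps(u_r)\leq\eps(1)$ and the linear dual-norm identity, and conclude by weak compactness in $H^{-1}(\Omega)$. The only difference is cosmetic: the paper plugs in the explicit test function solving $-\frac{\eps(1)}{r}\Delta\psi_r=Q_r\rho_r$ rather than invoking the supremum, and you spell out the choice of $a$ that the paper leaves implicit.
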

\begin{proof}
By our assumptions, it follows that $\rho_r \in \mathcal{A}_{r}(\Omega)$ for all $r$.
Let $u_r:\Omega \to \{0,1\}$ be optimal for $\rho_r$ and $\psi_r\in H_0^1(\Omega)$ the solution to $-\frac{\eps(1)}{r} \Delta \psi_r  = Q_{r}\rho_r$.
Hence, $E_{r}(\rho_r) \geq E_{r}(\rho_r,u_r,\psi_r)$.
For $a>0$ large enough, we can drop all but the electric term and obtain
\begin{align}
 E(\rho_r) &\geq \int_{\Omega} Q_{r}\rho_r \psi_r - \frac{\eps(u_r)}{2 r} |\nabla \psi_r|^2 \, dx \\&
 \geq \int_{\Omega} Q_{r}\rho_r \psi_r - \frac{\eps(1)}{2 r} |\nabla \psi_r|^2 \, dx \\ 
 &= -\frac{r}{2\eps(1)} <Q_{r} \rho_r, \Delta^{-1}( Q_{r} \rho_r)>_{H^{-1},H_0^1} =  \frac{r}{\eps(1)} \| Q_{r} \rho_r \|_{H^{-1}}^2.
\end{align}
For the inequality, we simply used that $\eps(1) \geq \eps(0)$.
As $E(\rho_r) \leq C r \alpha(r)^2$, we find that $\frac{Q_{r} \rho_r}{\alpha(r)}$ is bounded in $H^{-1}$ and hence converges up to a subsequence to some $\mu \in H^{-1}(\Omega)$.

\end{proof}

\begin{proposition}\label{prop: superlower}
Let $r \to 0$ and $\alpha(r) \to \infty$ such that $r \alpha(r) \to \infty$. 
Let $(\rho_r)_r$ be a sequence in $\mathcal{M}(\Omega)$ and $\mu \in H^{-1}(\Omega)$ such that $\frac{Q_{r}\rho_r }{\alpha(r)} \rightharpoonup \mu \text{ in } H^{-1}(\Omega)$. 
Then
\[
\liminf_{n \to \infty} \frac{1}{\alpha(r)^2 r} E_{r}(\rho_r) \geq \frac{1}{2 \eps(1)} \| \mu \|^2_{H^{-1}(\Omega)}
\]
\end{proposition}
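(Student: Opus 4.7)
The proof should be very short, essentially a slight refinement of the compactness argument given in Proposition~\ref{prop: supercomp}. The plan is to lower bound $E_r(\rho_r)$ by the purely electrical $H^{-1}$-energy of $Q_r\rho_r$ evaluated at $\eps \equiv \eps(1)$, which already has the right scaling, and then invoke weak lower semicontinuity of the $H^{-1}$-norm under the assumed convergence $Q_r\rho_r/\alpha(r) \rightharpoonup \mu$.

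First, I would pass to a subsequence so that $\liminf_{r\to 0}\frac{E_r(\rho_r)}{\alpha(r)^2 r}$ is a limit and is finite; in particular $\rho_r \in \mathcal{A}_r(\Omega)$. Let $u_r: \Omega \to \{0,1\}$ be optimal in the definition of $E_r(\rho_r)$. By (A2) and the change of variables $x = ry$, the Lennard-Jones term satisfies
\[
 r^{-3}\int_\Omega u_r(x) U_{r,\rho_r}(x)\,dx \geq -\sum_{i,j} \int_{\R^3} (U_{LJ}^i)^-(y)\,dy \geq -C_0 |\rho_r|(\Omega),
\]
so that, taking $a$ large enough so that $a - C_0 \geq 0$, the sum $a|\rho_r|(\Omega) + r^{-3}\int u_r U_{r,\rho_r}\,dx$ is nonnegative. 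The terms $r^{-3}\beta\int(1-u_r)\,dx$ and $r^{-2}\gamma|Du_r|(\Omega)$ are nonnegative by \ref{B1}, hence
\[
 E_r(\rho_r) \geq E_r^{el}(\rho_r, u_r).
\]

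Next, for the electric energy I exploit the inequality $\eps(u_r) \leq \eps(1)$ pointwise. Since the integrand in the definition \eqref{electrical energy} of $E_r^{el}(\rho_r,u_r)$ (with $B=0$) is $\int Q_r\rho_r \psi - \frac{\eps(u_r)}{2r}|\nabla\psi|^2\,dx$, replacing $\eps(u_r)$ by the larger constant $\eps(1)$ only decreases the integrand. Therefore
\[
 E_r^{el}(\rho_r,u_r) \geq \sup_{\psi \in H_0^1(\Omega)} \int_\Omega Q_r\rho_r\,\psi - \frac{\eps(1)}{2r}|\nabla\psi|^2\,dx = \frac{r}{2\eps(1)}\,\|Q_r\rho_r\|_{H^{-1}(\Omega)}^2,
\]
the supremum being explicitly computed using the Riesz representation for the Dirichlet form.

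Combining the two estimates and dividing by $\alpha(r)^2 r$,
\[
 \frac{E_r(\rho_r)}{\alpha(r)^2 r} \geq \frac{1}{2\eps(1)}\,\left\| \frac{Q_r\rho_r}{\alpha(r)} \right\|_{H^{-1}(\Omega)}^2.
\]
Since $Q_r\rho_r / \alpha(r) \rightharpoonup \mu$ in $H^{-1}(\Omega)$, the lower semicontinuity of the squared Hilbert norm under weak convergence yields
\[
 \liminf_{r \to 0} \frac{E_r(\rho_r)}{\alpha(r)^2 r} \geq \frac{1}{2\eps(1)}\|\mu\|_{H^{-1}(\Omega)}^2,
\]
which is the claim. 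There is no real obstacle here; the only point requiring a moment's care is the choice of $a$ large enough to absorb the Lennard-Jones tail (which is already assumed in (A1) and used earlier in the paper, e.g. in Lemma~\ref{lemma: subadditivity}), and the sign check that guarantees the local terms do not decrease the energy below $E_r^{el}$.
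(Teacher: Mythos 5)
Your proposal is correct and follows essentially the same route as the paper: the paper's proof also drops the local terms for $a$ large enough, bounds the electrical energy from below using $\eps(u_r)\leq\eps(1)$ (there via the explicit test function $\psi_r$ solving $-\frac{\eps(1)}{r}\Delta\psi_r=Q_r\rho_r$, which is the same computation as your supremum identity), identifies the result with $\frac{r}{2\eps(1)}\|Q_r\rho_r\|_{H^{-1}}^2$, and concludes by weak lower semicontinuity of the $H^{-1}$-norm.
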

\begin{proof}
We may assume that $\sup \frac{1}{\alpha(r)^2 r} E_{r}(\rho_r) < \infty$. 
Arguing exactly as in the compactness result above we find that
\[
 \frac1{\alpha(r)^2 r} E(\rho_r) \geq \frac{1}{2\eps(1)} \left\| \frac{Q_{r} \rho_r}{\alpha(r)} \right\|_{H^{-1}}^2.
\]
As $\frac{Q_{r}\rho_r }{\alpha(r)} \rightharpoonup \mu \text{ in } H^{-1}(\Omega)$ the lower bound follows directly from the semi-continuity of the $H^{-1}$-norm.

\end{proof}

\begin{proposition}\label{prop: superupper}
Let $\mu \in H^{-1}(\Omega)$. 
Then there exists a sequence $\rho_r \in \mathcal{A}_{r}(\Omega)$ such that $\frac{Q_{r}\rho_r}{\alpha(r)} \rightharpoonup Q_0\rho$ in $H^{-1}(\Omega)$ and
\[
 \limsup_{n \to \infty} \frac1{r \alpha(r)^2} E_{r}(\rho_r) \leq \frac1{2 \varepsilon(1)} \| \mu \|_{H^{-1}(\Omega)}^2. 
\]
\end{proposition}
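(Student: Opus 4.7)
The plan is to reduce by density to $\mu = Q_0 \rho$ with $\rho$ a componentwise nonnegative piecewise constant density, apply Lemma \ref{lemma: approxrho} to construct an atomic $\rho_r \in \mathcal{A}_r(\Omega)$ with $\rho_r/\alpha(r) \weakstar \rho$ and solute separation at scale $\alpha(r)^{-1/3} \gg r$, and select the phase field $u_r \equiv 0$ on small balls $B_{\tilde R r}$ around each solute (with $\tilde R$ large enough that $U_{LJ}^i$ is integrable outside $B_{\tilde R}(0)$) and $u_r \equiv 1$ elsewhere. The driving observation is that in the regime $r\alpha(r)\to\infty$ the energy scale $r\alpha(r)^2$ dwarfs the natural local scale $\alpha(r)$: with this choice of $u_r$, all local terms in $E_r$ (coercivity $a|\rho_r|$, pressure $r^{-3}\int(1-u_r)\,dx$, surface tension $r^{-2}|Du_r|(\Omega)$, and the Lennard-Jones integral) are bounded by $C\alpha(r)$ and hence vanish after division by $r\alpha(r)^2$. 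The role of the phase field is purely to render the possibly singular Lennard-Jones integral finite, while the choice $u_r=1$ on the bulk is what asymptotically produces the constant $\eps(1)$ in the electric energy.

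For the nonlocal electric contribution I would compare $E^{el}_r(\rho_r,u_r)$ with the benchmark $E^{el}_r(\rho_r,1)=\frac{r}{2\eps(1)}\|Q_r\rho_r\|_{H^{-1}(\Omega)}^2$, whose maximizer solves the linear equation $-\frac{\eps(1)}{r}\Delta\psi_r=Q_r\rho_r$ in $\Omega$ with zero Dirichlet data. The difference $E^{el}_r(\rho_r,u_r)-E^{el}_r(\rho_r,1)$ can be written as $\tfrac{1}{2}\int\frac{\eps(1)-\eps(u_r)}{r}\nabla\psi_r^1\cdot\nabla\psi_r^{opt}\,dx$, where $\psi_r^1$ and $\psi_r^{opt}$ are the respective maximizers; the integrand is supported on balls of total volume $O(r^3\alpha(r))$, and standard Green's function estimates yield $|\nabla\psi|\lesssim 1/r$ near each solute, so the correction is $O(\alpha(r))$, again negligible. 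Combined with the $H^{-1}$-convergence $\frac{Q_r\rho_r}{\alpha(r)}\rightharpoonup Q_0\rho=\mu$, which follows from the uniform bound $|\rho_r|(\Omega)\leq C\alpha(r)$, the compact supports of the $\phi_i$, and the vague convergence $\rho_r/\alpha(r)\weakstar\rho$ tested against $H_0^1$ functions, the desired asymptotic upper bound $\frac{1}{2\eps(1)}\|\mu\|_{H^{-1}}^2$ follows by dividing by $r\alpha(r)^2$ and passing to the limit.

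The main obstacle I anticipate is the density reduction. Via standard mollification the problem reduces to whether any smooth $L^2$ function lies in the $H^{-1}$-closure of $\{Q_0\rho : \rho^i\geq 0 \text{ componentwise}\}$. Under the natural physical assumption that the net charges $\int_{\R^3}\phi_i\,dx$ take both signs, any smooth real function decomposes as $\sum_i\bigl(\int_{\R^3}\phi_i\,dx\bigr)f_i$ with $f_i\geq 0$ smooth, so the reduction goes through. A diagonal argument across the density approximation, using $H^{-1}$-continuity of the target norm, then yields a recovery sequence for arbitrary $\mu\in H^{-1}(\Omega)$.
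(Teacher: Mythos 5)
Your approach is correct in spirit and genuinely different from the paper's. The paper does not argue by direct comparison with the $u\equiv 1$ benchmark; instead it reuses wholesale the decomposition from the critical regime (Proposition~\ref{prop: critupper}, Step~1), writing $\nabla\psi_r^{opt}=\sum_{j,k,i}(\nabla\psi_j^{k,i})\varphi_j^{k,i}+R_r$ with the local pieces built from the optimal single-solute profiles $(u^k,\psi^k)$, proves $R_r/(r\alpha(r))\to\nabla\psi$ strongly in $L^2$, and then simply observes that the cell-problem contributions $M_r^{j,k}E_1(e_k\delta_0;\R^3)=O(\alpha(r))$ are wiped out after dividing by $r\alpha(r)^2$. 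Your route is cleaner precisely because, in the supercritical regime, the detailed structure of the local $u$ is irrelevant at the leading scale; you exploit this by taking the crudest possible phase field (balls) and comparing the electric energy directly with that of $u\equiv 1$.

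Two estimates need tightening, though the conclusion survives both. First, the pointwise bound $|\nabla\psi|\lesssim 1/r$ near each solute ignores the far-field contribution from the other $O(\alpha(r))$ solutes at mutual distance $\gamma_r\approx\alpha(r)^{-1/3}$, which contributes $O(r\alpha(r))$ to the gradient; in the part of the supercritical regime where $r^2\alpha(r)\to\infty$ this dominates the local $1/r$. The correct bound on the correction term is then $O(\alpha(r)+r^2\alpha(r)^2+r^4\alpha(r)^3)$ rather than $O(\alpha(r))$; all three pieces are still $o(r\alpha(r)^2)$ thanks to $r\alpha(r)\to\infty$ and $r^3\alpha(r)\to 0$, so the conclusion holds, but the claim as stated is false. (A Cauchy--Schwarz between $\nabla\psi^1_r$ and $\nabla\psi^{opt}_r$ combined with the trivial bound $\eps(1)|\nabla\psi^{opt}_r|^2\leq\frac{\eps(1)}{\eps(0)}\eps(u_r)|\nabla\psi^{opt}_r|^2$ gives a clean way to make this rigorous without needing any pointwise control on $\nabla\psi^{opt}_r$, which is delicate across the $\eps$-discontinuity.) Second, the asserted weak $H^{-1}$ convergence of $Q_r\rho_r/\alpha(r)$ does not follow from the $L^1$ bound and vague convergence alone, since $L^1\not\hookrightarrow H^{-1}$ in dimension $3$ and the naive $L^1$--$L^\infty$ interpolation diverges; you need to exploit the explicit $\gamma_r$-separation of the lattice in Lemma~\ref{lemma: approxrho} to bound $\|Q_r\rho_r\|_{H^{-1}}^2\lesssim\alpha(r)/r+\alpha(r)^2=O(\alpha(r)^2)$, which combined with distributional convergence gives the weak $H^{-1}$ limit. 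Finally, your observation about the sign of the net charges $\int\phi_i$ is a real constraint that the paper leaves implicit in its claim that one can pick componentwise nonnegative $\xi_j$ with $Q_0(\xi_j\1_{E_j})=c_j\1_{E_j}$; your explicit treatment of this point is an improvement, not a defect.
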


\begin{proof}
 By standard density arguments we may assume that $\mu = \sum_{j=1}^L c_j \1_{E_{j}}$, where the $c_j \in \mathbb{R}$ and the $E_{j} \Subset \Omega$ are open. 
 Let $\rho = \sum_{j=1}^L \xi_j \1_{E_j} \in \mathcal{M}(\Omega;\R^N)$ such that $Q_0 \rho = \mu$. \\
Now, we can argue as in the upper bound in the critical regime step 1 to find a measure $\rho_r \in \mathcal{A}_{r}$ such that $\frac{Q_{r} \rho_r}{\alpha(r)} \rightharpoonup Q_0 \rho$
and obtain the corresponding version of the estimate \eqref{eq: estimateenergycrit}
\begin{align}
E_{r}(\rho_r) \leq &\sum_{j=1}^{L} \sum_{k=1}^N M_r^{j,k} \left(E_1(e_k \delta_0;\R^3) -  \int_{\R^3 \setminus B_{\gamma_r / r}(0)} U_{1,\delta_0} \1_{\{U_{1,\delta_0} \leq 0\}} \,dx \right) \\
&+ \int_{\Omega} \frac{\eps(u_r)}{2r} |R_r|^2 \,dx + 2 \int_{\Omega} \frac{\eps(u_r)}{r} R_r \cdot \sum_{j,k,i} (\nabla \psi_j^{k,i}) \varphi_j^{k,i} \,dx.
\end{align}
where $\frac{R_r}{\alpha(r) r)} \rightarrow \nabla \psi$ in $L^2(\Omega)$ where $\psi \in H_0^1(\Omega)$ solves $-\eps(1) \Delta \psi = Q_0 \rho$, $u_r \rightarrow 1$ in measure, $\frac{M_r^{j,k}}{\alpha(r)^2 r} \to 0$, and $\frac{\sum_{j,k,i} (\nabla \psi_j^{k,i}) \varphi_j^{k,i}}{\alpha(r)r} \rightharpoonup 0$ in $L^2(\Omega)$.
Hence, as $r \alpha(r) \to \infty$, we find that
\begin{align}
\limsup_{r\to0} \frac1{\alpha(r)^2 r} E_{r}(\rho_r) &\leq \int_{\Omega}  \frac{\eps(1)}{2} |\nabla \psi|^2 \, dx =\frac{1}{2 \eps(1)} \| Q_0\rho \|_{H^{-1}}^2.
 \end{align}

\end{proof}

\section{Conclusion and Future Directions}\label{sec: conclusion}
We studied the asymptotic behavior of a sharp-interface model for the solvation of molecules in an implicit solvent as the number of solute molecules and the size of the surrounding box go to infinity.
For the model including $B$ as in \eqref{eq: defB} we proved a screening effect, i.e., the limit energy is purely local. 
In the case $B=0$, we identified the competing local and nonlocal interaction terms and the corresponding regimes. \\
Work in progress is to get rid of the extra assumption of well-separateness.
As in the subcritical regime, one has to find good clusters of molecules whose interaction with the neighboring clusters is negligible in the limit. 
Then one could split the electrical energy into a local self-energy and a far-field interaction.
The approach of considering the quantity $\sum_{i\neq j} \frac{1}{|x_i-x_j|}$ we used in the subcritical regime cannot be modified easily.
The problem is that by considering the absolute value of the interactions we cannot distinguish between the energetically allowed configuration of dipoles concentrating on a line and the energetically very expensive configuration of molecules of the same charge concentrating on a line.  \\
Moreover, we would like to study the $\Gamma$-convergence result with respect to convergence of measures in the dual of $C_b^0(\Omega)$ opposed to $C_0^0(\Omega)$ to capture also boundary effects.
The $\Gamma$-limit of $E_r / \alpha(r)$, say in the case of $B\neq 0$ and $\partial \Omega \in C^1$, is expected to be of the form
\[
\int_{\Omega} \varphi\left( \frac{d\rho}{d|\rho|} \right) \, d|\rho| + \int_{\partial \Omega} \varphi_{\partial}\left(\nu, \frac{d\rho}{d|\rho|}\right) \, d|\rho|,
\] 
where $\varphi_{\partial}: S^2 \times \R^N \rightarrow \R$ is the self-energy density on the boundary which is essentially given as the subadditive, $1$-homogeneous envelope of the energy for a single cluster of molecules in a half-space.
If we allow further to rotate all molecules, we can also eliminate the dependence on $\nu$.

\section*{Acknowledgments}

The authors wish to acknowledge the Center for Nonlinear Analysis where this work was carried out.The research of Janusz Ginster was funded by the National Science Foundation under NSF PIRE Grant No. OISE-0967140.

The authors are deeply grateful to Irene Fonseca and Giovanni Leoni for bringing the topic to their attention and for many fruitful discussions.

\bibliographystyle{abbrv}
\bibliography{solvation}

\end{document}